\newtheorem{thm}{Theorem}[section]
\newtheorem*{thm*}{Theorem}
\newtheorem{proposition}[thm]{Proposition}
\newtheorem{lemma}[thm]{Lemma}
\newtheorem{cor}[thm]{Corollary}
\theoremstyle{definition}
\newtheorem{rmk}[thm]{Remark}
\numberwithin{equation}{section}
\newtheorem{notation}[thm]{Notation}  
\newcommand{\boxtensor}{{\Box\kern-9.03pt\raise1.42pt\hbox{$\times$}}}
\newcommand{\F}{{\mathbb F}}
\newcommand{\N}{{\mathbb N}}
\newcommand{\Z}{{\mathbb Z}}
\newcommand{\sB}{{\mathcal B}}
\renewcommand{\tilde}{\widetilde}
\renewcommand{\hat}{\widehat} 
\numberwithin{equation}{section}
\newcounter{elno}                
\newcounter{example}[section] 
\def\theexample{\thesection.\arabic{example}}
\begin{document}
\title{Rings of invariants for three dimensional modular representations}
\author{J\"urgen Herzog and Vijaylaxmi Trivedi}

\begin{abstract}{Let $p>3$ be a prime number. We compute the rings of invariants of 
the elementary abelian $p$-group $(\Z/p\Z)^r$ for $3$-dimensional generic 
representations. Furthermore we show that these rings of invariants are complete 
intersection rings  with embedding dimension $\lceil r/2\rceil +3$.

This proves a conjecture of Campbell, Shank and Wehlau in [CSW], which they  proved  
for $r=3$,  and was later proved for $r=4$ by Pierron and Shank.
}\end{abstract}

\keywords{ring of invariants, SAGBI basis, modular representations, 
$(\Z/p\Z)^r$}

\thanks{The second author  acknowledges  support of the Department of Atomic
Energy, India under project number RTI4001.}

\address{J\"urgen Herzog, Fakult\"at f\"ur Mathematik, Universit\"at Duisburg-Essen, 
45117 Essen, Germany} \email{juergen.herzog@uni-essen.de}

\address{School of Mathematics, Tata Institute of
Fundamental Research,
Homi Bhabha Road, Mumbai-400005, India}
\email{vija@math.tifr.res.in}

\subjclass{}
\date{}
\maketitle

\section{Introduction}

Let $\F$  a field of characteristic $p>2$ and let $G$ be a finite abelian group. 
Furthermore, let $V$ be an $n$-dimensional representation of $G$ over the field $\F$.
Then by choosing a  basis $\{x_1, \ldots, x_n\}$ for $V^\ast$ 
we have the canonical right action of $G$ on the polynomial ring $R = \F[x_1, \ldots, x_n]$. 
It is a classical problem of (algebraic) invariant theory to determine the structure 
of the ring of invariants

$$R^G = \F[x_1, \ldots, x_n]^G = \{f\in \F[x_1,\ldots, x_n]\mid f\cdot g = f\quad \forall\quad
 g\in G\}.$$

If the representation is non modular, that is, if $p\nmid |G|$  
then the group algebra ${\F}[G]$ is a semisimple ring and  
the ring $R^G$ is 
a direct summand of the polynomial ring $R$.

However the theory of modular representations turns out to  be much 
more complicated
 even for elementary abelian
$p$-groups $G= (\Z/p\Z)^r$, (see for example  [B, \S 4.4]). In the case $r=1$, 
that is  for the cyclic group $\Z/p\Z$, the theory has been studied by 
Dickson~\cite{D}  and later by Wehlau~\cite{We}.

Campbell, Shank and Wehlau initiated in their paper \cite{CSW} the study of 
the rings of invariants of modular representations  of  such 
groups for  $p>2$.  They considered the rings 
of invariants  of the group  $G = (\Z/p\Z)^r$  of rank $r\geq 2$ 
for a  modular representation of dimension $n\leq 3$.  
 More precisely they showed 

\begin{enumerate}
\item if $n=2$ then  the ring 
of invariants $\F[X, Y]^G$ is  a  polynomial algebra in two variables over $\F$.

\item  If  $n=3$ and the representation is of

\begin{enumerate}
\item  type $(2,1)$:
 $\dim_\F(V^G) = 2$ and $\dim_\F((V/V^G)^G) = 1$. Then 
$\F[X, Y, Z]^G$ is a polynomial ring;

\item type $(1,2)$:  $\dim_F(V^G) = 1$ and $\dim_\F((V/V^G)^G) = 2$.  Then
  $\F[X, Y, Z]^G$ is again a polynomial algebra;

\item type(1,1,1): $\dim_F(V^G) = 1$ and $\dim_\F((V/V^G)^G) = 1$.   Then
  $\F[X, Y, Z]^G$ is a complete intersection ring provided $r\leq 3$.
\end{enumerate}
\end{enumerate}
 In a 
subsequent paper  Pierron and Shank treated the case $r=4$, see \cite{PS}.

In both situations ({\em i.e.}, $r = 3$ and $r=4$)
 the authors first construct a SAGBI basis for the
generic representation (definition is recalled below) of type $(1,1,1)$. Then, 
by specialization and some
polynomial conditions, they 
construct a SAGBI basis  for general representations of type $(1,1,1)$.

First the authors in \cite{CSW} observe   that every  representation $V$ of type
$(1,1,1)$ is equivalent to $V_M$, where 
$M = \left[c_{ij}\right]_{2\times r}\in \F^{2\times r}$ and  where
$V_M$ is a three dimensional  representation, whose action  of $G$ with respect to the 
basis $\{X,Y,Z\}$ for $V_M^\ast$  as follows: 

$$X\cdot e_j = X, \quad Y\cdot e_j = Y+c_{1j}X, \quad Z\cdot e_j =
Z+2c_{1j}Y+(c_{1j}^2+c_{2j})X. 
$$
Here $e_1,\ldots,e_r$ are the canonical generators  of $G=(\Z/p\Z)^r$.

In other words, for the representation  $G\longrightarrow GL_3(\F)$, 
the $e_j$ can be represented by the matrix
$$ e_j \to \left[\begin{matrix} 1 & 2c_{1j} & c_{1j}^2+c_{2j}\\
0 & 1 & c_{1j}\\
0 & 0 & 1\end{matrix}\right].$$

The authors compute the ring of invariants $\F[X, Y, Z]^G$ 
for these representations by  considering them
as a specialization  of the generic representation which is given as follows: 
Consider a polynomial algebra $\F_p[x_{ij}]:= \F_p[x_{1j}, x_{2j}\mid j = 1, 2, 
\ldots, r]$ in $2r$ indeterminates,
and its quotient field ${\mathcal{K}} = \F_p(x_{ij})$.
Let  $G= (\Z/p\Z)^r$ be the elementary $p$-group with  the canonical generators
$e_1, \ldots, e_r$,  and let $V$ be a three dimensional ${\mathcal{K}}$-vector space with the basis
$\{X, Y, Z\}$ for $V^\ast$.
Let the action of $G$ be given by the group homomorphism
$\rho: G \longrightarrow GL_3({\mathcal{K}})$
\begin{equation*}e_j \to \left[\begin{matrix} 1 & 2x_{1j} & x_{1j}^2+x_{2j}\\
0 & 1 & x_{1j}\\
0 & 0 & 1\end{matrix}\right],\end{equation*}
therefore, for any given $j$
\begin{equation}\label{e1}X\cdot e_j = X, \quad Y\cdot e_j = x_{1j}X+Y\quad\mbox{and}\quad  
Z\cdot e_j = (x_{1j}^2+x_{2j})X+2x_{1j}Y+Z.\end{equation}

\vspace{5pt}

Then there exist homogeneous polynomials $f_1, f_2, f_3, N_G(Z):= \prod_{g\in G}Z\cdot g \in {\mathcal{K}}[V]^G$
such that
\begin{enumerate}
\item if $r=2$ then $\{X, f_1, f_2, N_G(Z)\}$ is a SAGBI basis for
${\mathcal{K}}[V]^G$ (Theorem~6.2 of \cite{CSW}).
\item if $r=3$ then $\{X, f_1, f_2, f_3, N_G(Z)\}$ is a SAGBI basis for
${\mathcal{K}}[V]^G$ (Theorem~7.4 of \cite{CSW}).
 \item If $r=4$ then $\{X, f_1, f_2, f_3, N_G(Z)\}$ is a SAGBI basis for
${\mathcal{K}}[V]^G$ (Theorem~2.3 of \cite{PS}). 
\end{enumerate}

Based on the numerous calculations using Magma \cite{BCP}, 
the authors  in  \cite{CSW}
made  the following conjecture for general $r$.
Here recall that  the polynomial ring ${\mathcal{K}}[X, Y, Z]$ is given the  graded
reverse lexicographic order with $X<Y<Z$, and  $LM(f)$ denotes the 
leading monomial of $f$.

\vspace{5pt}

\noindent{\bf Conjecture}~(\cite{CSW})
{\em Let $V$ be  the  generic three dimensional representation of the group 
$G= (\Z/p\Z)^r$ over 
the field
 ${\mathcal{K}}:= \F_p(\{x_{1j},x_{2j}\mid j=1,2, \ldots, r\})$.
Let $s=\lceil r/2 \rceil$. 

Then the ring of invariant ${\mathcal{K}}[V]^G$ is a complete 
intersection ring with embedding dimension $s+3$. 
Furthermore, there exists a SAGBI basis $\{X, f_1, \ldots, f_{s+1}, N_G(Z)\}$, where 
$N_G(Z) = \prod_{g\in G}Z\cdot g$,
such that:

\begin{enumerate}

\item if $r=2s$ then 
$$LM(f_1) = Y^{p^{s}},  \quad \mbox{and}\quad 
LM(f_i) = Y^{p^{s+i-2}+2p^{s-i+1}},$$
for $i>1$ and the relations are constructed by subducting the 
t\^{e}te-\`{a}-t\^{e}tes
$$(f_2^p, f_1^{p+2}), \quad \mbox{and}\quad
 (f_i^p, f_{i-1}f_1^{(p^2-1)p^{i-3}}), ~~~\mbox{for}~~i>2,$$
\item if $r=2s-1$ then 
$$LM(f_1) = Y^{2p^{s-1}}, \quad LM(f_2) = Y^{p^s}, \quad \mbox{and}\quad 
LM(f_i) = Y^{p^{s+i-3}+2p^{s-i+1}},$$
for $i>2$ and the relations are constructed by subducting the t\^{e}te-\`{a}-t\^{e}tes
$$(f_1^p, f_2^2), \quad (f_3^p, f_1f_2^p), \quad \mbox{and}\quad
 (f_i^p, f_{i-1}f_2^{(p^2-1)p^{i-4}}), ~~~\mbox{for}~~i>3.$$
\end{enumerate}}

In this paper we prove the above conjecture, except for the particular case $G = (\Z/3\Z)^{2s}$.
We follow  the strategy given in 
\cite{CSW}, in particular,  SAGBI bases are used here to compute the ring of invariants  
by means  of the   so-called SAGBI/Divide-by-$x$ algorithm,  which has been 
introduced by Campbell et al.   Also the explicit description of the leading monomials
as stated in the conjecture   works here as a guiding light for the proof.

It was expected that as $r$ increases the calculation will become exponentially 
complex as seen, {\em e.g.}, in  $r=2$ versus  $r=3$ versus $r=4$ cases.
Here one of the main points  is to  write down a closed formula 
((\ref{f_{j+3}}) and (\ref{f_{j+3}e}))
for the coefficients of 
  $f_j$, mod the ideal $X^{2p^{s-(j-1)}}(X, Y)$,
in terms of the explicit minors in $\F_p[x_{ij}]$. This allows us to 
inductively   construct $f_3, \ldots, f_{s+1}$, for all $r$  in a 
straightforward  way. 

We expect that this description of the coefficients 
 would make it possible  to compute the invariants 
and stratify the space of 
all three dimensional  
representations of $(\Z/p\Z)^r$ as   was done  for $r=3$ in \cite{CSW} and for 
$r=4$ in \cite{PS}.

Now to construct the relations for ${\mathcal{K}}[X,Y,Z]^G$ we use a lemma due to 
Watanabe \cite{Wa}: If  $H$ is a numerical semigroup such that the semigroup ring 
${\mathcal{K}}[H]$ is a complete intersection, then so is ${\mathcal{K}}[H']$ provided $H'$ is obtained from
$H$ by {\em gluing}. Moreover the relations for ${\mathcal{K}}[H']$ are given in terms of the
{\em gluing pair} of elements.

Here, by such gluing operations, we construct  a sequence  (depending on the parity of $r$) 
of numerical semigroups, where  the 
 first semigroup is $\N$ and therefore
the first semigroup ring is a polynomial ring and the final semigroup ring is
${\mathcal{K}}[LT(f_1), \ldots, LT(f_{s+1})]$ with explicit relations.

This, along with the
well known  result of Robbiano-Sweedler~\cite{RS}, implies that to verify 
the conjecture 
it is enough to check that  each of the 
 t\^{e}te-\`{a}-t\^{e}tes,  as given in the conjecture of \cite{CSW},
 subducts to $0$, which now easily follows from the construction of the $f_i$.

The layout of the paper is as follows.

In Section~2 we recall the basic results about SAGBI basis. In order to make the paper 
self contained we  recall some results from  [CSW], and  
a result about the Pl\"ucker relations for the minors from [LR] 
(also see [BH]).

We also  describe the `seed' generators $f_1$ and $f_2$ which are a normalization of 
 the `seed' generators constructed in [CSW].

In Section~3 we consider the case when the rank of  $G$ is $r = 2s-1$ and $p>2$.
Here we  construct a set of elements $f_3, \ldots, f_{s+1}, f_{s+2}$ in the 
invariant ring 
${\mathcal{K}}[X, Y,Z]^G$, where $\{f_1, \ldots, f_{s+1}\}$  have the  same leading terms  
as in the conjecture and the $LM(f_{s+2}) = LM(N_G(Z))$.
We also explicitly write the formula for the elements $f_3,\ldots, f_{s+1}, f_{s+2}$.

In Section~4 we do the similar construction for $r=2s$ and for $p>3$. Here 
the element $f_3$  is a normalization of the element $f_3$ given in [PS].

In Section~5  we prove the main results namely 
Theorem~\ref{t1} and Theorem~\ref{t1e}, in particular
we show that the  set $\{X, f_1, \ldots, f_{s+1}, N_G(Z)\}$ is a
 SAGBI basis for 
${\mathcal{K}}[X, Y,Z]^G$ and the ring of invariants  is a complete intersection 
ring.

\vspace{5pt}

The second author  would like to thank Prof. M. Levine for the 
 support
 to facilitate  her visit to Universit\"at Duisburg-Essen, where a part of 
the work was done. She is also thankful for his insightful comments during and after  her talk on the draft version of the work.
She  would also like to thank Dr. Jai Laxmi for  bringing the paper 
[CSW] to her notice during a  discussion on 
Hilbert-Kunz multiplicities of ring of invariants of modular representations.

We are grateful to 
Prof. R.J. Shank for carefully reading the manuscript and for making numerous
suggestions which 
improved the exposition
tremendously.

We are also grateful to the referee for a meticulous job, which 
has  improved the clarity and the exposition, and corrected 
a number of minor typographical errors.

\section{Preliminaries}

We briefly recall some basic facts about the representation theory of finite 
abelian groups. 

Let $\F$ be a field of characteristic $p>0$. 
Let $G$ be a finite abelian group, and 
let $V$ be an $n$-dimensional representation of $G$ over the field $\F$.
If we consider $V$ as a left $\F[G]$-module and $V^*$ as a right $\F[G]$-module
and denote the symmetric algebra on $V^*$ by $\F[V]$,  then
the action of $G$ on $V^*$ induces a natural degree preserving
algebra automorphism on $\F[V]$. 

This means for a given basis 
$\{x_1, x_2, \ldots, x_n\}$ of $V^*$, 
the $\F$-algebra $\F[V]$ may be identified with the polynomial ring
$\F[x_1, x_2, \ldots, x_n]$ and 
giving a  representation $V$ of $G$ is equivalent to giving 
 a group homomorphism $\sigma:G\longrightarrow GL_n(\F)$.

A different choice of basis of $V^*$ gives a different but conjugate group homomorphisms.
 Hence
 the ring of invariants 
$$\F[x_1, \ldots, x_n]^G = \{f\in \F[x_1,\ldots, x_n]\mid f\cdot g = f\quad \forall\quad
 g\in G\}$$
is well defined up to a ring isomorphism.

Now 
we recall the notion of SAGBI basis which was  introduced independently by
Robbiano-Sweedler~\cite{RS} and Kapur-Madlener~\cite{KM} (see also \cite{EH}
for an exposition).

Let $S = K[X_1, \ldots, X_n]$ be a polynomial ring over a field $K$, and
let $A\subseteq  S$ be a finitely generated subalgebra. We fix a term
order $<$ for the monomials in $S$, and let  $LM(A)$  be
the $K$-subalgebra of $S$ generated by the initial monomials $LM(a)$,
$a\in A$. The algebra
$LM(A)$ is called the lead monomial  algebra of $A$ (with respect to the 
term order $<$). It is clear that $LM(A)$  is the semigroup algebra
of a suitable additive  sub semigroup $H$  of $\mathbb{N}$.

A subset $\sB\subset A$ is called a SAGBI basis of $A$ if 
$LM(A)= K[\{LM(g): g\in \sB\}]$.
It is easy to check that if $\sB$ is a SAGBI basis for $A$,  then it 
generates $A$ as
 $K$-algebra.

 Let 
$\sB=\{f_1,\ldots,f_m\}$ be a finite set of polynomials in $S$, and let
$A=K[f_1,\ldots,f_m]$. We recall  a criterion for SAGBI bases due to
Robbiano and Sweedler \cite{RS}. Let $f_i = a_i\tilde{f}_i$, where
$a_i\in K\setminus \{0\}$ and 
$\tilde{f}_i$ is monic for $i=1,\ldots,m$. It is easy to see that $\sB$ is
a SAGBI basis for  $A$ if and only if $\{\tilde{f}_1,\ldots,\tilde{f}_m\}$
is a SAGBI basis for $A$. Thus we may assume that all $f_i\in\sB$ have
leading coefficient $1$.

\begin{thm}\label{RSKM}
Let  $R=K[t_1,\ldots,t_m]$ be a polynomial ring, and let 
$$\varphi : R=K[t_1,\ldots,t_m]\to A\quad\mbox{given by}\quad 
\varphi(t_i) = f_i\quad\mbox{for}\quad i=1,\ldots,m,$$ 
be the surjective $K$-algebra 
homomorphism. Furthermore, let  
$$\phi:R\longrightarrow K[LM(f_1),\ldots, LM(f_m)] = B
\quad\mbox{given by}\quad 
\phi(t_i) = LM(f_i)$$ 
be the $K$-algebra homomorphism.

The ideal $\ker(\phi)$ is generated by binomials, since $B$ is a toric 
$K$-algebra. For an integer vector ${\bf a}=(a_1,\ldots,a_m)$ we set 
${\bf t}^{\bf a}=t_1^{a_1}t_2^{a_2}\cdots t_m^{a_m}$.

 Let  $\{{\bf t}^{{\bf a}_1}-{\bf t}^{{\bf b}_1},\ldots,
{\bf t}^{{\bf a}_r}-{\bf t}^{{\bf b}_r}\}$  be a  set of binomial generators 
of the toric ideal $\ker(\phi)$.
 Then $f_1,\ldots,f_m$ is a SAGBI basis of $A$, if and only if, for each $i$
 the pair $({\bf f}^{{\bf a}_j}, {\bf f}^{{\bf b}_j})$
 subducts to $0$, {\em i.e.},
 there 
exist elements $c_{j, {\bf a}}\in k$ such that

\begin{eqnarray}\label{subduct1}
\label{lifting}
{\bf f}^{{\bf a}_j}-{\bf f}^{{\bf b}_j}=
\sum_{{\bf a}}c_{j, {\bf a}}{\bf f}^{{\bf a}}\quad \text{with}
\quad LM({\bf f}^{\bf a})< LM({\bf f}^{{\bf a}_j})\quad \text{for all}
\quad {\bf a}_j, {\bf b}_j,
\end{eqnarray}
where  ${\bf f}^{{\bf a}}=f_1^{a_1}\cdots f_m^{a_m}$ for ${\bf a}
=(a_1,\ldots,a_m)$.

If these equivalent conditions hold, then the polynomials
\[
G_j(t_1,\ldots,t_m)={\bf t}^{{\bf a}_j}-{\bf t}^{{\bf  b}_j}-
\sum_{{\bf a}}c_{j, {\bf a}}{\bf t}^{{\bf a}}, \quad j=1,\ldots, r,
\]
generate $\ker(\varphi)$.
\end{thm}

 The pairs  $({\bf f}^{{\bf a}_j},{\bf f}^{{\bf b}_j} )$ are 
called  t\^{e}te-\'{a}-t\^{e}tes.  One says that the    
t\^{e}te-\'{a}-t\^{e}te  $({\bf f}^{{\bf a}_j},{\bf f}^{{\bf b}_j}) $ 
subducts to $0$, if equation (\ref{subduct1}) holds.

\medskip

We recall the following two theorems from \cite{CSW}.

\begin{thm}\label{CSW}(\mbox{Theorem~1.1,~\cite{CSW}}).\quad  Let $K$ be a field and $S= K[X, Y_1, \ldots, Y_n]$ 
be the polynomial ring. Let $B\subset S$  be a $K$-algebra generated by homogeneous 
polynomials and assume that $X\in B$. Let $<$ be the graded reverse lexicographic 
order induced by $X<Y_1 < \cdots < Y_n$, and let $f_1, \ldots, f_l$ be 
homogeneous polynomials in $B$ such that $LM(f_i) \in 
K[Y_1, \ldots, Y_n]$ for $i=1, \ldots, l$. Let $A=K[X, f_1, \ldots, f_l]$ and 
suppose that $\sB = \{X, f_1, \ldots, f_l\}$ is a SAGBI basis for $A$. 
Suppose further that 
\begin{enumerate}\item $A[X^{-1}] = B[X^{-1}]$;
\item $S$ is an integral extension of $A$.
\end{enumerate}
Then $A= B$, and consequently $\sB$ is  a  SAGBI basis of $B$.
\end{thm}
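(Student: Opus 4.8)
The plan is to prove the equality of the lead monomial algebras $LM(A)$ and $LM(B)$ and then descend to the algebras themselves. Since each $f_i$ and $X$ lie in $B$, the inclusion $A\subseteq B$ is immediate, and hence $LM(A)\subseteq LM(B)$. Both $A$ and $B$ are graded subalgebras of $S=k[X,Y_1,\dots,Y_n]$, and every graded piece $S_d$ is finite dimensional over $k$; moreover, since $S$ is a domain, for any graded subalgebra $C\subseteq S$ the degree-$d$ part $(LM(C))_d$ is spanned by $\{LM(c):c\in C_d\}$, and these distinct leading monomials form a $k$-basis, so $\dim_k C_d=\dim_k (LM(C))_d$. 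Consequently, once I show the reverse inclusion $LM(B)\subseteq LM(A)$, I obtain $LM(A)=LM(B)$, hence $\dim_k A_d=\dim_k B_d$ for every $d$; combined with $A_d\subseteq B_d$ this forces $A_d=B_d$ and therefore $A=B$. The final assertion, that $\mathcal{B}$ is then a SAGBI basis of $B$, is immediate from $A=B$.

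The heart of the argument is the reverse inclusion, carried out by a \emph{divide-by-$X$} step. Fix a homogeneous $b\in B$. By hypothesis (1), $A[X^{-1}]=B[X^{-1}]$, so there is an integer $N\ge 0$ with $X^N b\in A$. Since $S$ is a domain and $X^N$ is a monomial, $X^N\,LM(b)=LM(X^N b)\in LM(A)$. Now I invoke the SAGBI hypothesis: $LM(A)=k[X,LM(f_1),\dots,LM(f_l)]$ is a monomial subalgebra whose generators other than $X$ all lie in $k[Y_1,\dots,Y_n]$, because $<$ is the graded reverse lexicographic order with $X$ smallest and $LM(f_i)\in k[Y_1,\dots,Y_n]$ by assumption. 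Write the monomial $LM(b)=X^a m$ with $m$ a monomial in the $Y_j$ alone. Then $X^{N+a}m$ is a \emph{single} monomial lying in the monomial algebra $LM(A)$, so it must coincide with a product $X^{e_0}\prod_i LM(f_i)^{e_i}$; comparing the $X$-free parts gives $m=\prod_i LM(f_i)^{e_i}$. Hence $m\in LM(A)$, and since $X\in LM(A)$ as well, I conclude $LM(b)=X^a m\in LM(A)$.

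This establishes $LM(B)\subseteq LM(A)$, and with it $A=B$ as explained above. Equivalently, once $LM(b)\in LM(A)$ one may realize $LM(b)=LM\bigl(X^a\prod_i f_i^{e_i}\bigr)$ with $X^a\prod_i f_i^{e_i}\in A$ and subduct: subtracting a suitable scalar multiple strictly lowers the leading monomial of $b$ while keeping it in $B$, and since only finitely many monomials of degree $\deg b$ lie below $LM(b)$, the process terminates with $b\in A$. This is precisely the Robbiano--Sweedler subduction underlying Theorem~\ref{RSKM}, and it gives $B\subseteq A$ directly.

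I expect the divide-by-$X$ step to be the only genuine obstacle, and it is exactly there that the shape of the term order is decisive: because $X$ is the smallest variable in the graded reverse lexicographic order, the leading monomials of the generators split cleanly into the distinguished $X$-generator and the $X$-free generators $LM(f_i)$, so stripping off the factor $X^{N+a}$ never manufactures spurious $Y$-factors and keeps us inside $LM(A)$. Hypothesis (2), integrality of $B$ over $A$, appears not to be needed for this particular descent once (1) is available in the strong form $A[X^{-1}]=B[X^{-1}]$; its natural role, in the presence of module-finiteness, is to produce a \emph{uniform} exponent $N$ with $X^N B\subseteq A$ (the cokernel being $X$-power torsion), which would allow the same argument to be run with a single $N$ for all $b$ simultaneously.
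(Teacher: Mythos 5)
Your proof is correct, but there is nothing in this paper to compare it against: Theorem~\ref{CSW} is quoted from \cite{CSW} (Theorem~1.1) without proof and is used as a black box through Remark~\ref{r1}. Judged on its own terms, your argument is sound and self-contained. Hypothesis (1) yields $X^Nb\in A$ for a homogeneous $b\in B$; multiplicativity of leading monomials in the domain $S$ gives $X^N LM(b)=LM(X^Nb)\in LM(A)$; the SAGBI hypothesis identifies $LM(A)$ with the monomial algebra $k[X,LM(f_1),\ldots,LM(f_l)]$, whose generators other than $X$ lie in $k[Y_1,\ldots,Y_n]$, so comparing $X$-degrees strips off the factor $X^{N+a}$ and leaves $LM(b)\in LM(A)$; and since $A$ and $B$ are generated by homogeneous elements and the order is degree-compatible, $\dim_k C_d=\dim_k(LM(C))_d$ holds for each graded subalgebra $C$, so $LM(A)=LM(B)$ forces $A_d=B_d$ in every degree. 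The genuinely noteworthy feature is the one you flag yourself: hypothesis (2) is never used, hence is redundant under the stated (graded) hypotheses --- indeed it follows trivially from the conclusion $A=B$. That observation is not idle for the present paper: Lemma~\ref{l1} is proved there solely to supply hypothesis (2) before Theorem~\ref{CSW} is invoked in Remark~\ref{r1}, and your argument shows that this verification could be skipped, condition (1) (supplied by Theorem~\ref{CSW2}) alone sufficing. The only step in your write-up requiring care is termination of the subduction variant; you anchor it correctly in the finiteness of the set of monomials of degree $\deg b$, and your primary dimension-count argument avoids even that.
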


Let ${\mathcal{K}} = \F_p(x_{ij})$ and let the group  $G= (\Z/p\Z)^r$ act
on  ${\mathcal{K}}[X, Y, Z]$ as in (\ref{e1}). We assume $r\geq 3$.
Now we recall the description, from \cite{CSW}, of the elements $g_1$ and $g_2\in 
{\mathcal{K}}[X, Y, Z]^G$. For our purpose we would normalize them to construct $f_1, f_2$ in 
${\mathcal{K}}[X, Y, Z]^G$.

Consider  a   $(2r+2)\times r$ matrix with entries in $k$ 
$$\Gamma:= \left[\begin{matrix} x_{11} & x_{12} & \cdots & x_{1r}\\
x_{21} & x_{22} & \cdots & x_{2r}\\
x^p_{11} & x^p_{12} & \cdots & x^p_{1r}\\
x^p_{21} & x^p_{22} & \cdots & x^p_{2r}\\
& & \vdots & \\
x^{p^r}_{11} & x^{p^r}_{12} & \cdots & x^{p^r}_{1r}\\
x^{p^r}_{21} & x^{p^r}_{22} & \cdots & x^{p^r}_{2r}
\end{matrix}\right].$$

For a subsequence $I= (i_1, \ldots, i_r)$ of $(1, 2,\ldots, 2r+2)$, let 
$\nu_I\in {\mathcal{K}}$ be the associated minor of $\Gamma$.
 Consider a $(2r+2)\times (r+1)$ matrix ${\tilde \Gamma}$
with entries in the extended ring
${\mathcal{K}}[X, Y, Z][X^{-1}]$. This is 
 constructed  by  a 
column augmention of  the matrix $\Gamma$, where $\Delta = Y^2-XZ$,

 $${\tilde \Gamma} := \left[\begin{matrix}x_{11} & x_{12} & \cdots & x_{1r} & Y/X\\
x_{21} & x_{22} & \cdots & x_{2r} & -\Delta/X^2\\
x^p_{11} & x^p_{12} & \cdots & x^p_{1r} & (Y/X)^p\\
x^p_{21} & x^p_{22} & \cdots & x^p_{2r} & (-\Delta/X^2)^p\\
& & \vdots & \\
x^{p^r}_{11} & x^{p^r}_{12} & \cdots & x^{p^r}_{1r}& (Y/X)^{p^r}\\
x^{p^r}_{21} & x^{p^r}_{22} & \cdots & x^{p^r}_{2r} & (-\Delta/X^2)^{p^r}
\end{matrix}\right].$$

For a subsequence $J= (j_1, \ldots, j_{r+1})$ of $(1, 2, \ldots, 2r+2)$, let 
${\tilde f_J}\in {\mathcal{K}}[X, Y, Z][X^{-1}]$ denote the associated $(r+1)\times 
(r+1)$ minors of ${\tilde \Gamma}$. Let $f_J$ denote the element of 
${\mathcal{K}}[X, Y, Z]$ constructed by minimally clearing the denominator of ${\tilde f_J}$.

Now for  $\Delta_j:= e_j-1\in {\mathcal{K}}[G]$, where $e_1, \ldots, e_r$ are the canonical generators of $G = (\Z/p\Z)^r$ we have 
$$Y\cdot \Delta_j =  Y\cdot e_j - Y =  Xx_{1j}\quad \mbox{and}\quad 
\Delta\cdot \Delta_j = (Y^2-XZ)\cdot e_j - (Y^2-XZ) = -X^2x_{2j}.$$
Therefore 
if ${\bf v}$ denotes the last column of ${\tilde \Gamma}$, then 
$${\bf v}\cdot \Delta_j = [\Gamma_{1j}, \Gamma_{2j}, \ldots, \Gamma_{(2r+1)j}]^T,$$
the $j^{th}$ column of $\Gamma$. Thus ${\tilde f_J}\cdot \Delta_j = 0$ 
 means   
 ${\tilde f_J}\in {\mathcal{K}}[X, Y, Z]^G[X^{-1}]$ and
$f_J\in {\mathcal{K}}[X, Y, Z]^G$.

\begin{enumerate}
\item If $r = 2s-1\geq 3$ is an odd integer then 
 \begin{equation}\label{g1e} 
{g_1} := f_{(1, 2, \ldots, r, r+1)}\quad\mbox{and}\quad 
{g_2}:= f_{(1,2, \ldots, r, r+2)}.\end{equation}

\item If $r = 2s\geq 4$ is an even integer and $p>3$ is a prime number.
Then $g_1$ and $g_2$ are given as follows.
Let $g_1 := f_{(1, 2, \ldots, r, r+1)}$ and 
${\bar g_2}:= f_{(1,2, \ldots, r, r+2)}$.
Let  $c_0 = \nu_{1,2,\ldots, r} $ and 
$c_1 = \nu_{1,2,\ldots, {\hat{r}}, r+1}$ denote 
 the  $r\times r $ minors of $\Gamma$.
Then we define 
\begin{equation}\label{g2e}g_2 = 
\frac{c_0{\bar g_2}+g_1^2}{c_0c_1X^{p^s-2p^{s-1}}}.
\end{equation}
\end{enumerate}

Note that in \cite{CSW} $g_i$ are denoted as $f_i$ and $g_2 = (1/c_0c_1)f_2$.

\begin{thm}\label{CSW2}(\mbox{Theorem~5.2. ~\cite{CSW}}).\quad For $g_1$, 
$g_2$ as in (\ref{g1e}) and 
(\ref{g2e}) and for the action of the group $G = (\Z/p\Z)^r$ on ${\mathcal{K}}[X, Y, Z]$ as 
in (\ref{e1}), we have
${\mathcal{K}}[X, Y, Z]^G[X^{-1}] = {\mathcal{K}}[X, g_1, g_2][X^{-1}]$.\end{thm}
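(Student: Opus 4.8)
The plan is to trivialize the $X$-direction by passing to the affine coordinates $u=Y/X$ and $w=-\Delta/X^2$ (with $\Delta=Y^2-XZ$) on the localization. Since $Y=Xu$ and $Z=X(w+u^2)$, one has $k[X,Y,Z][X^{-1}]=k[X^{\pm1},u,w]$, and the identities $Y\cdot\Delta_j=Xx_{1j}$, $\Delta\cdot\Delta_j=-X^2x_{2j}$ recorded above translate into
$$X\cdot e_j=X,\qquad u\cdot e_j=u+x_{1j},\qquad w\cdot e_j=w+x_{2j}.$$
Thus after inverting $X$ the group fixes the unit $X$ and acts on $u,w$ purely by translation by the vectors $\lambda_j=(x_{1j},x_{2j})$. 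Writing $\Lambda\subset k^2$ for the $\F_p$-span of the $\lambda_j$ — an $r$-dimensional $\F_p$-subspace, because the $x_{ij}$ are algebraically independent over $\F_p$ — and using that $G$ fixes $X^{\pm1}$, I would reduce the theorem to the clean equality
$$k[X,Y,Z]^G[X^{-1}]=k[X^{\pm1}]\otimes_k k[u,w]^\Lambda,$$
so that everything hinges on computing $k[u,w]^\Lambda$. The inclusion $k[X^{\pm1},g_1,g_2]\subseteq k[X,Y,Z]^G[X^{-1}]$ is immediate since the $g_i$ are invariant and $X$ is a unit.

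Next I would identify $g_1,g_2$, up to the invertible factor $X$, with \emph{additive} polynomials, i.e. $\F_p$-linear combinations of the $u^{p^i},w^{p^i}$. Expanding the minor $\tilde f_J$ along the last column of $\tilde\Gamma$ (whose entries are the $u^{p^i}=(Y/X)^{p^i}$ and $w^{p^i}=(-\Delta/X^2)^{p^i}$) writes it as $L_J(u,w)=\sum_i\bigl(A_i u^{p^i}+B_i w^{p^i}\bigr)$, the $A_i,B_i$ being signed $r\times r$ minors of $\Gamma$. The relation $\tilde f_J\cdot\Delta_j=0$ proved in the excerpt says exactly $L_J(\lambda_j)=0$ for all $j$; since $(u+a)^{p^i}=u^{p^i}+a^{p^i}$ in characteristic $p$, any additive polynomial vanishing on $\Lambda$ is automatically $\Lambda$-invariant. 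For $r=2s-1$ the chosen rows give $L_1:=L_{(1,\dots,r,r+1)}$ of top degree $p^{s-1}$ (leading form $\propto w^{p^{s-1}}$, matching $LM(g_1)=Y^{2p^{s-1}}$) and $L_2:=L_{(1,\dots,r,r+2)}$ of top degree $p^{s}$ (leading form $\propto u^{p^{s}}$, matching $LM(g_2)=Y^{p^{s}}$). For $r=2s$ the rows $r{+}1,r{+}2$ give $g_1$ of top degree $p^s$ (leading form $\propto u^{p^s}$) and $\bar g_2$ of top degree $p^s$ (leading form $\propto w^{p^s}$); because $c_0,c_1\in k$ and $X$ are units, (\ref{g2e}) shows $k[X^{\pm1},g_1,g_2]=k[X^{\pm1},g_1,\bar g_2]$, so in either parity I may work with a pair $L_1,L_2$ of additive invariants, and it remains to prove $k[u,w]^\Lambda=k[L_1,L_2]$.

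For this I would study the additive morphism $\Phi=(L_1,L_2)\colon\mathbb A^2\to\mathbb A^2$, whose fibre $\Phi^{-1}(0)=\{L_1=L_2=0\}$ is a subgroup scheme containing $\Lambda$. In each parity the two leading forms are coprime monomials in distinct variables, so $L_1,L_2$ share no component, the common zero locus is finite, and by B\'ezout its length equals $\deg L_1\cdot\deg L_2$, namely $p^{s-1}\!\cdot p^{s}=p^{2s-1}$ for $r=2s-1$ and $p^{s}\!\cdot p^{s}=p^{2s}$ for $r=2s$, in both cases $p^r=|\Lambda|$. Since $\Lambda$ already supplies $p^r$ distinct (reduced) points, $\Phi^{-1}(0)=\Lambda$ scheme-theoretically, so $\Phi$ is finite flat of degree $p^r$ and realizes the quotient $\mathbb A^2\to\mathbb A^2/\Lambda$. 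Hence $k[u,w]$ is free of rank $p^r$ over $k[L_1,L_2]$, giving $[k(u,w):k(L_1,L_2)]=p^r=|\Lambda|=[k(u,w):k(u,w)^\Lambda]$; as $k(L_1,L_2)\subseteq k(u,w)^\Lambda$ this forces $k(L_1,L_2)=k(u,w)^\Lambda$, and since the polynomial ring $k[L_1,L_2]$ is integrally closed and $k[u,w]^\Lambda$ is integral over it and contained in $k(L_1,L_2)$, we get $k[u,w]^\Lambda=k[L_1,L_2]$. Feeding this into the first paragraph yields $k[X,Y,Z]^G[X^{-1}]=k[X^{\pm1},g_1,g_2]$.

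The hard part is the sharpness of the degree count: one must know that the common zero locus of the two additive polynomials is \emph{exactly} $\Lambda$, reduced and of length $|\Lambda|$, with nothing extra. This is where the explicit row choice in (\ref{g1e}) is essential, since it is what guarantees coprime leading forms $u^{p^a},w^{p^b}$ with $p^a\cdot p^b=p^r$; I would verify coprimality and the reducedness of $\Lambda$ (using algebraic independence of the $x_{ij}$) as the technical core. The only subtlety in the even case is the passage $g_2\leftrightarrow\bar g_2$, which requires $c_0=\nu_{1,\dots,r}\ne0$ and $c_1=\nu_{1,\dots,\hat r,r+1}\ne0$; these are nonzero minors, hence units in the field $k$, so the substitution is legitimate and the same finite-map argument applies without change.
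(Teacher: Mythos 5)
The paper contains no proof of this statement at all: Theorem~\ref{CSW2} is recalled verbatim from \cite{CSW} (Theorem~5.2 there), and the only related fact the paper itself verifies is the invariance of the minors $\tilde f_J$ via $\tilde f_J\cdot \Delta_j=0$. So your argument is necessarily a different route, and judged on its own merits it is essentially correct and self-contained. The reduction $k[X,Y,Z]^G[X^{-1}]=k[X^{\pm 1}]\otimes_k k[u,w]^\Lambda$ is valid (one may invert the invariant element $X$ before or after taking invariants, and $G$ preserves the $X$-grading while acting on $(u,w)$ by the translations $\lambda_j=(x_{1j},x_{2j})$, which are $\F_p$-independent by algebraic independence of the $x_{ij}$); the expansion of $\tilde f_J$ along the last column of $\tilde\Gamma$ does exhibit $g_1,g_2$, up to unit powers of $X$, as additive polynomials $L_1,L_2$ vanishing on $\Lambda$; and the endgame (kernel $=\Lambda$ by B\'ezout, finite flatness of degree $p^r$, Artin's theorem $[k(u,w):k(u,w)^\Lambda]=|\Lambda|$, and normality of $k[L_1,L_2]$ to promote the field equality to a ring equality) is a legitimate and rather clean way to finish. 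The even-case bookkeeping with $g_2$ versus $\bar g_2$ is also handled correctly, since $c_0,c_1$ are nonzero minors and $X$ is a unit.

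One claim does need repair, and it sits exactly at the spot you identified as the technical core. In the odd case $r=2s-1$, with the standard grading $\deg u=\deg w=1$ (the grading your B\'ezout count $\deg L_1\cdot\deg L_2=p^{s-1}\cdot p^s$ presupposes), the top-degree form of $L_1$ is the binomial $a_0w^{p^{s-1}}-a_1u^{p^{s-1}}$, \emph{not} a monomial proportional to $w^{p^{s-1}}$; the monomial statement would only be true for the weighted grading $\deg u=1$, $\deg w=2$, under which the naive product of degrees is no longer the correct B\'ezout bound. The fix is one line: since $a_0\neq 0$, setting $u=0$ in this binomial leaves $a_0w^{p^{s-1}}\neq 0$, so $u$ does not divide the top form of $L_1$; as the top form of $L_2$ is $a_0u^{p^s}$, the two top forms are still coprime, and the count $p^{s-1}\cdot p^s=p^r$ together with everything downstream goes through unchanged. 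Two smaller points you should write out in a final version: finiteness of $\Phi$ (coprime top forms make the pair a homogeneous system of parameters, whence $k[u,w]$ is module-finite over $k[L_1,L_2]$ by the standard filtered/graded argument) and the degree claim, where flatness can either be obtained by miracle flatness (finite map from a Cohen--Macaulay ring onto a regular one) or bypassed entirely by using additivity of $\Phi$: every nonempty fiber is a coset of $\ker\Phi=\Lambda$, so the generic fiber has exactly $p^r$ reduced points and $[k(u,w):k(L_1,L_2)]=p^r$ follows directly.
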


The following lemma follows from Theorem~3.1  of [SW].
Here we give a self contained proof which is along the same lines as in
  the second part of the 
 proof of Theorem~3.1 of [SW].

\begin{lemma}\label{l1} Let $B = {\mathcal{K}}[X, Y, Z]^G$. If $f_{s+2} = 
Z^{p^r}+Xp_1+Y p_2$ is a homogeneous polynomial  in $B$, where 
$p_1, p_2\in {\mathcal{K}}[X,Y,Z]$. Then $B$  is integral over the ring
$A = {\mathcal{K}}[X, g_1, g_2, f_{s+2}]$. \end{lemma}

\begin{proof}Let us denote  $l_1 = X$, $l_2 = g_1$ and $l_3 = f_{s+2}$ 
and let $A' = {\mathcal{K}}[l_1, l_2, l_3]$.
Since  $A'\subseteq 
A\subseteq  B\subseteq S = {\mathcal{K}}[X, Y, Z]$,  it is enough to prove that 
$S$ is integral over $A'$.

Let ${\bf m}_{A'} = \sum_{i=1}^3l_iA'$
denote the maximal ideal of $A'$. Note that 
$l_2 = \lambda Y^{2p^{s-1}}+X{p_3}$ and 
$l_3 = Z^{p^r}+X{p_1}+Y{p_2}$, where 
${p_i}\in {\mathcal{K}}[X,Y,Z]$ and $\lambda\in {\mathcal{K}}\setminus \{0\}$. 
Therefore ${\bf m}_{A'}S \supseteq  (X, Y^{2p^{s-1}}, Z^{p^{r+s}})$. 
In particular
$S/{\bf m}_{A'}S$ is a finite dimensional ${\mathcal{K}}$-vector space.

 Let $\{h_1, \ldots, h_m\}$ be a set of homogeneous 
elements in $S$ which 
generate the ${\mathcal{K}}$-vector space $S/{\bf m}_{A'}S$ and let 
$H = \sum_{i=1}^mh_iA$.
 Then $S = H + {\bf m}_{A'}S$. Now we show that $S = H$.

We note that 
 $S/H$ is a $\N$-graded $A'$-module. If it is nonzero then
we can choose an element $r\in S\setminus H$ 
which is a homogeneous element and of least possible degree.
We can write $r=\sum_{i}l_is_i$ mod $H$ such that $s_i$ are homogeneous
elements in $S$ and $l_is_i\not\in H$.
Now for some $i$, $\deg~l_is_i = \deg~r$. But then
$s_i\in S\setminus  H$ such that $\deg~s_i < \deg~r$ which is a contradiction.
\end{proof}

\begin{rmk}\label{r1} Let $g_1$ and $g_2$ be the elements in $B={\mathcal{K}}[X, Y, Z]^G$
as  given  in (\ref{g1e}) and (\ref{g2e}). 
In the next two sections (for $r$ odd and for $r$ even) we construct a set of elements  
$\sB_1 = \{X, f_1, f_2, \ldots, f_{s+1}, f_{s+2}\}$  in $B$, where  $f_{s+2} = Z^{p^r}+X p_1+Y p_2$ is a homogeneous polynomial, and ${\mathcal{K}}[X, f_1, f_2] = 
{\mathcal{K}}[X, g_1, g_2]$. 
In Section~5 we
show that $\sB_1$ is a SAGBI basis for $A = {\mathcal{K}}[X, f_1, f_2
\ldots, f_{s+1}, f_{s+2}] \subseteq {\mathcal{K}}[X,Y,Z]^G$. This, by Theorem~\ref{CSW}, will imply that
$\sB_1$ is a SAGBI basis for 
${\mathcal{K}}[X,Y,Z]^G$ and ${\mathcal{K}}[X, Y, Z]^G = A$.

On the other hand,  by the choice of term order we have
$LT(N_G(z)) = LT(f_{s+2})$, where $N_G(Z) = \prod_{g\in G}Z\cdot g\in B$.
Hence we have  the  set $\{X, f_1, f_2, \ldots, f_{s+1}, N_G(z)\}$ 
as  a SAGBI basis for $B$.
\end{rmk}

We recall the following theorem from Lakshmibai-Raghavan~[LR, $\S$ 4.1.3] 
or Bruns-Herzog~[BH, Lemma~7.2.3],  which 
gives a crucial family of Pl\"ucker relations for the minors of $\Gamma$ 
and ${\tilde \Gamma}$.
\begin{thm}\label{PS} Let $N$ be an $n\times m$ matrix with $n>m$. Denote by 
$\nu_{i_1, \ldots, i_m}$ the minor of $N$ determined by the rows 
$i_1, \ldots, i_m$. 
For sequences $(i_1, \ldots, i_{m-1})$ and $(j_1, \ldots, j_{m+1})$, we have 
the following Pl\"ucker relation
$$\sum_{a=1}^{m+1}(-1)^a\nu_{i_1, \ldots, i_{m-1}, j_a}\cdot 
\nu_{j_1, \ldots, j_{a-1}, j_{a+1}, \ldots, j_{m+1}} = 0.$$
\end{thm}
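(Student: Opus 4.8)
The plan is to prove the identity by reducing it to the vanishing of a determinant with two equal columns, via a single cofactor expansion. Write $N=(N_{it})$ with columns $c_1,\dots,c_m\in k^n$, and recall that $\nu_{k_1,\dots,k_m}$ is the determinant of the $m\times m$ submatrix of $N$ on rows $k_1,\dots,k_m$ (in that order) and on all $m$ columns. Fix $I=(i_1,\dots,i_{m-1})$ and $J=(j_1,\dots,j_{m+1})$. No assumption is needed that these sequences are increasing or have distinct entries, since a determinant with a repeated row vanishes and a reordering of a sequence only introduces an overall sign common to every summand.

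First I would separate, in each factor $\nu_{I,j_a}=\nu_{i_1,\dots,i_{m-1},j_a}$, the part that depends on $a$ from the part that does not. Expanding this $m\times m$ determinant along its last row (the row coming from $j_a$) gives
\[
\nu_{I,j_a}=\sum_{t=1}^{m}(-1)^{m+t}\,N_{j_a,t}\,M_t,
\]
where $M_t$ is the $(m-1)\times(m-1)$ minor on rows $i_1,\dots,i_{m-1}$ and on the columns $\{1,\dots,m\}\setminus\{t\}$; crucially $M_t$ is independent of $a$. Substituting and interchanging the two summations, the left-hand side of the Pl\"ucker relation becomes
\[
\sum_{a=1}^{m+1}(-1)^a\nu_{I,j_a}\,\nu_{J\setminus j_a}
=\sum_{t=1}^{m}(-1)^{m+t}M_t\Big(\sum_{a=1}^{m+1}(-1)^a N_{j_a,t}\,\nu_{J\setminus j_a}\Big),
\]
so it suffices to show that the inner sum vanishes for every fixed column index $t$.

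The key step is to realize this inner sum as a Laplace expansion of a singular determinant. For a fixed $t$, form the $(m+1)\times(m+1)$ matrix $Q$ whose first $m$ columns are the columns of $N$ restricted to the rows $j_1,\dots,j_{m+1}$, and whose last column is a duplicate of its own column $t$. Then $\det Q=0$, because $Q$ has two equal columns. Expanding $\det Q$ along the last column, the entry in row $a$ is $N_{j_a,t}$ and the complementary minor (delete row $a$ and the last column) is exactly $\nu_{J\setminus j_a}$, the maximal minor of $N$ on the rows $J$ with $j_a$ removed. Hence $0=\det Q=(-1)^{m+1}\sum_a(-1)^a N_{j_a,t}\,\nu_{J\setminus j_a}$, which gives the vanishing of the inner sum and therefore of the whole expression.

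I expect the only real obstacle to be sign bookkeeping: one must check that the cofactor signs $(-1)^{m+t}$ from the row expansion and the factor $(-1)^{m+1}$ from the column expansion are exactly as written, and confirm that the constant relating $\det Q$ to the inner sum is a unit, so that vanishing of the determinant is equivalent to vanishing of the stated alternating sum. As a cross-check I would record an alternative, coordinate-free derivation: viewing the columns of $N$ as vectors $w_1,\dots,w_m$ and setting $\omega=w_1\wedge\cdots\wedge w_m\in\bigwedge^m k^n$, the maximal minors are the Pl\"ucker coordinates of $\omega$, and the stated relation is precisely the coordinate form of the identity $(\iota_\xi\omega)\wedge\omega=0$ for $\xi=e_{i_1}^{*}\wedge\cdots\wedge e_{i_{m-1}}^{*}$; this identity holds for decomposable $\omega$ because $\iota_\xi\omega$ is a linear combination of the $w_k$, each of which wedges with $\omega$ to zero by repetition. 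Either route yields the Pl\"ucker relation as stated.
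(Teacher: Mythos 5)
Your proof is correct, and every sign works out as you wrote it: the expansion of $\nu_{i_1,\dots,i_{m-1},j_a}$ along its last row carries the cofactor sign $(-1)^{m+t}$, and the expansion of your auxiliary matrix $Q$ along its duplicated last column carries $(-1)^{a+(m+1)}$, so the inner sum $\sum_{a}(-1)^a N_{j_a,t}\,\nu_{j_1,\dots,\hat{j}_a,\dots,j_{m+1}}$ is indeed $(-1)^{m+1}\det Q=0$ for each fixed column index $t$. Note, however, that the paper itself gives no proof of this statement: Theorem 2.6 is recalled from Lakshmibai--Raghavan [LR], where the Pl\"ucker relations arise inside the machinery of standard monomial theory for Grassmannians. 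So your argument is not a variant of the paper's proof but a replacement for a citation, and it buys something concrete: a short, self-contained, characteristic-free derivation using nothing beyond Laplace expansion and the vanishing of a determinant with two equal columns, valid for arbitrary (not necessarily increasing or distinct) row sequences --- which matters here, since the paper applies the relation to quite general index sequences in Lemmas 3.5 and 4.3. Your coordinate-free cross-check via $(\iota_\xi\omega)\wedge\omega=0$ for decomposable $\omega$ is also sound and is essentially the intrinsic form of the same identity; either version could be inserted as a proof of Theorem 2.6 without changing anything else in the paper.
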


\section{When $r=2s-1$ is odd }

Throughout this section ${\mathcal{K}} = \F_p(x_{ij})$ and   $G= (\Z/p\Z)^r$ acts
on  ${\mathcal{K}}[X, Y, Z]$ as in (\ref{e1}). Here we deal with the case when 
$r= 2s-1\geq 3$ is an odd 
integer and $p>2$ is a prime number. The proof for $r=3$ is same as in [CSW].

In Proposition~\ref{odd},  we construct  elements $f_1, f_1, \ldots, 
f_{s+2}$ in ${\mathcal{K}}[X, Y, Z]^G$ such that
the elements $f_1, \ldots, f_{s+1}$ have the leading monomials
(in fact the leading terms) as  in the Conjecture~\cite{CSW}~(2), and 
the leading monomial of $f_{s+2}$ is same as the leading monomial of $N_G(Z)$. 
 
Let $g_1 = f_{(1, 2, \ldots, r, r+1)}$ and 
$g_2 = f_{(1,2, \ldots, r, r+2)}$ be $(r+1)\times (r+1)$ minors of 
${\tilde \Gamma}$ as given in (\ref{g1e}) and (\ref{g2e}). Then 

$$\begin{array}{lcl}
g_1 & = & X^{2p^{s-1}}\left[a_0(-\frac{\Delta}{X^2})^{p^{s-1}}
-a_1(\frac{Y}{X})^{p^{s-1}}
+a_2(-\frac{\Delta}{X^2})^{p^{s-2}}\ldots -a_r(\frac{Y}{X})\right]\\\
&  = & -a_0\Delta^{p^{s-1}}- a_1X^{p^{s-1}}Y^{p^{s-1}}-
a_2\Delta^{p^{s-2}}X^{2p^{s-1}-2p^{s-2}}\ldots -a_rYX^{2p^{s-1}-1}\end{array}$$
 and
$$\begin{array}{lcl}
g_2 & = & X^{p^s}\left[a_0(\frac{Y}{X})^{p^s} -b_1(\frac{Y}{X})^{p^{s-1}}+
b_2(-\frac{\Delta}{X^2})^{p^{s-2}}
\ldots +b_{r-1}(-\frac{\Delta}{X^2})-b_r(\frac{Y}{X})\right]\\\
& = &  a_0Y^{p^s}- b_1Y^{p^{s-1}}X^{p^s-p^{s-1}} -
b_2\Delta^{p^{s-2}}X^{p^s-2p^{s-2}}\ldots -b_{r-1}\Delta 
X^{p^s-2}-b_rYX^{p^s-1},\end{array}$$
where $a_i, b_i\in {\mathcal{K}}\setminus \{0\}$.
In  Notation~\ref{n2} we give precise description 
of the $a_i$ and the $b_i$.

Let $f_1 = -g_1/a_0$. Then 
\begin{equation}\label{f1} f_1 = \Delta^{p^{s-1}}+ 
\tfrac{a_1}{a_0}X^{p^{s-1}}Y^{p^{s-1}}+
\tfrac{a_2}{a_0}\Delta^{p^{s-2}}X^{2p^{s-1}-2p^{s-2}}\ldots + 
\tfrac{a_r}{a_0}YX^{2p^{s-1}-1}\end{equation}
and let $f_2 = g_2/a_0$ then 
\begin{equation}\label{f2} f_2 = Y^{p^s}- \tfrac{b_1}{a_0}Y^{p^{s-1}}X^{p^s-p^{s-1}} 
-\tfrac{b_2}{a_0}\Delta^{p^{s-2}}X^{p^s-2p^{s-2}}\ldots -\tfrac{b_{r-1}}{a_0}\Delta 
X^{p^s-2}-\tfrac{b_r}{a_0}YX^{p^s-1}.\end{equation}

\begin{notation}\label{n2}Here  $r= 2s-1$.
The elements $a_i$, $b_i$ occurring in (\ref{f1}) and 
(\ref{f2}) are  $r\times r$ minors of the  
matrix $\Gamma$ and are given as follows: A subsequence 
$(j_1, \ldots,  j_{r})$ of $(1, 2, r, r+1, r+2)$, 
determines a $r\times r$ minor, $\nu_{j_1, \ldots,  j_{r}}$,
 of the $(2r+1)\times r$ matrix $\Gamma$. For $0\leq k\leq r$
$$a_0 = b_0 = \nu_{1,2,\ldots,  r, {\hat{r+1}}}\quad
a_{k} = \nu_{1,2,\ldots, {\hat {r+1-k}},\ldots, r, r+1}$$
$$b_0 = a_0\quad \mbox{and}\quad 
  b_{k} = \nu_{1,2,\ldots, {\hat {r+1-k}},\ldots,  r,r+2}.$$
Also $a_i = b_i = 0$ for $i>r$.

Further, let  $\{l_1, \ldots, l_{2m+2n+1}\}$ be a set of $2m+2n+1$ 
integers.
For a sequence $(j_1, \ldots, j_n)$ we define 
$(j_1, \ldots, \{\hat{j_m}, \ldots, j_n\})$ to be the subsequence constructed 
by omitting $j_{m+2l}$ for $l=0, 1, \ldots$, and we denote
$${ \nu}_{l_1, l_2, \ldots, 
\{{\hat{l_{2m}}},  \ldots, l_{2m+2n+1}\}} = 
\nu_{l_1, l_2, \ldots, l_{2m-1}, {\hat {l_{2m}}},
l_{2m+1}, {\hat {l_{2m+2}}}, l_{2m+3},{\hat {l_{2m+4}}},
\ldots, l_{2m+2n+1}}.$$
\begin{enumerate}
\item 
For $-1\leq j\leq s-2 $ and for $0\leq k < r-2j-1$, we define a set of $r\times r$-minors of $\Gamma$ 
as follows: $B(-1, k) = a_k = \nu_{1,2, \ldots,  
{\hat{r+1-k}}, \ldots, r+1}$
and
$$ B(-1) = B(-1,0) = a_0 =  \nu_{1,2, \ldots, r}\neq 0.$$
For $j\geq 0$, let ${B(j, k)} = {\nu}_{1,2, \ldots,  
{{r-2j-1-k}}, \ldots,\{{\hat{r-2j+1}}, 
\ldots, r+2j+2\}} \neq 0$ and
 $$B(j) = B(j, 0) = { \nu}_{1,2, \ldots,  
\{{\hat{r-2j-1}}, \ldots, r+2j+2\}}\neq 0.$$
In particular, for $k\geq 0$ we have  $B(0, k) = b_{k+2}$.
We define  $B(j, k) = 0$ if  $k>r-2j-2$. 
\item For $0\leq j\leq s-2$, the elements
$N_{1j}$, $L_{2j}$ and $N_{2j}$ are the elements of 
${\mathcal{K}}$ given as
$$N_{1j}
 =  \tfrac{B(j-1, 1)}{B(j-1)} -\tfrac{B(j,1)^p}{B(j)^p},\quad\quad
L_{2j} = \tfrac{B(j-1,2)}{B(j-1)} -\tfrac{B(j,2)^p}{B(j)^p},$$
  
$$N_{2j} = \tfrac{B(j-1,3)}{B(j-1)}- L_{2j}\cdot \tfrac{B(j,1)}{B(j)} - 
\tfrac{B(j,3)^p}{B(j)^p}.$$

\item We consider the following  set of elements in the polynomial ring
${\mathcal{K}}[t_1, t_2, \ldots, t_{s+2}]$ where $E_{1j}({\bf \underline{t}})\in 
{\mathcal{K}}[t_1, \ldots, t_{j+2}]$ and 
$E_{2j}({\bf\underline{ t}})\in 
{\mathcal{K}}[t_1, \ldots, t_{j+3}]$. 
Here ${\bf \underline{t}} = (t_1, \ldots, t_{s+2})$. 

\begin{enumerate}
\item  $E_{1,0}({\bf\underline{ t}}) =  t_1^{\frac{p^2+1}{2}}$.\\
\item For $1\leq j\leq s-2$
$$E_{1j}({\bf\underline{ t}}) = 
 t_2^{p^{j+1}-\frac{p^{j}}{2}-p^{j-1}-\frac{p}{2}}(t_1\cdots 
t_{j+2})^{\frac{(p-1)}{2}}t_{j+2}.$$
 
\item For $0\leq j\leq s-2$,  
$$E_{2j}({\bf \underline{t}}) = t_2^{\frac{p^{j+1}}{2}-p^{j}-
\frac{p}{2}}(t_1\cdots 
t_{j+3})^{\frac{(p-1)}{2}}t_{j+3}.$$
\end{enumerate}
\item We consider the following set of elements $F_{j+3}({\bf\underline{ t}})$ 
in the polynomial ring
${\mathcal{K}}[t_1, t_2, \ldots, t_{s+2}][X]$ where   $F_{j+3}({\bf \underline{t}})\in 
{\mathcal{K}}[t_1, \ldots, t_{j+2}][X]$:

\begin{equation}\label{F3t}F_3({\bf\underline{ t}}) = t_2^2- t_1^p+(2b_1/a_0)\cdot 
t_1^{(p+1)/2}X^{p^s-p^{s-1}}.\end{equation}

\begin{multline}\label{F4t}
F_{4}({\bf\underline{ t}}) =  t_{3}^p
-t_2^pt_1+ \left(N_{1,0}\right)\cdot 
E_{1,0}({\bf \underline{t}})X^{p^{s-1}}\\
 + \left(L_{2,0}\right)\cdot  
t_2^{(p-1)}t_{3}X^{2p^{s-1}-2p^{s-2}}
+ \left(N_{2,0}\right)\cdot E_{2,0}({\bf\underline{ t}})X^{2p^{s-1}-p^{s-2}}.
\end{multline}

For $1\leq j\leq s-2$
\begin{multline}\label{Fj+4t}
F_{j+4}({\bf\underline{ t}}) =  t_{j+3}^p
-t_2^{p^{j-1}(p^2-1)}t_{j+2}+ \left(N_{1j}\right)\cdot 
E_{1j}({\bf \underline{t}})X^{p^{s-1-j}}\\\\
 + \left(L_{2j}\right)\cdot  
t_2^{p^{j}(p-1)}t_{j+3}X^{2p^{s-1-j}-2p^{s-2-j}}
+ \left(N_{2j}\right)\cdot E_{2j}({\bf\underline{ t}})X^{2p^{s-1-j}-p^{s-2-j}}.
\end{multline}
\end{enumerate}
\end{notation}

\vspace{5pt}

Now after making choices of $E_{1j}({\bf \underline{t}})$ and 
$E_{2j}({\bf \underline{t}})$ it is easy to check the 
leading terms of polynomials as given in the following lemma.
Note that for $f_1$ and $f_2$ as in (\ref{f1}) and (\ref{f2}) we have
$LT(f_1) = Y^{2p^{s-1}}$ and $LT(f_2) = Y^{p^s}$.

\begin{lemma}\label{LM}Let $r\geq 3$ be an odd integer and let $s = (r+1)/2$. 
Suppose there exists an integer
 $j_1\leq s-2$  and homogeneous polynomials $f_3, \ldots, f_{j_1+3} \in {\mathcal{K}}[X,Y, Z]$ 
such that 
for every $j$, where $0\leq j\leq j_1$,  we have $LT(f_{j+3}) = Y^{p^{s+j}+2p^{s-2-j}}$.

Then 

\begin{enumerate}
\item For $1\leq j\leq j_1 $, we have  
$LT(f_2^{p^{j-1}(p^2-1)}f_{j+2}) = Y^{p^{s+1+j}+2p^{s-1-j}}$.\\

\item For $0\leq j\leq j_1$, we have  
\begin{enumerate}
\item 
$LT(f_2^{p^{j}(p-1)}f_{j+3}) = Y^{p^{s+1+j}+2p^{s-2-j}}$,\\

\item $LT(E_{1j}({\bf\underline{ f}})) = Y^{p^{s+1+j}+p^{s-1-j}}$,\\

\item  $LT(E_{2j}({\bf\underline{ f}})) = Y^{p^{s+1+j}+p^{s-2-j}}.$
\end{enumerate}
\end{enumerate}
 \end{lemma}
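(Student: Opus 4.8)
The plan is to reduce every assertion to a single numerical identity among exponents of $Y$. First I would record the $Y$-exponents of the known leading terms: writing $LT(f_i)=Y^{d_i}$, we have $d_1=2p^{s-1}$, $d_2=p^s$, and, by hypothesis, $d_{j+3}=p^{s+j}+2p^{s-2-j}$ for $0\le j\le j_1$ (equivalently $d_i=p^{s+i-3}+2p^{s+1-i}$ for $3\le i\le j_1+3$). The crucial structural point is that $k[X,Y,Z]$ is a domain and $<$ is a monomial order, so $LT$ is multiplicative; since moreover each $LT(f_i)$ is a \emph{monic power of $Y$}, the leading term of any monomial $\prod_i f_i^{a_i}$ is again a monic power of $Y$, namely $Y^{\sum_i a_i d_i}$. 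In particular, because each $E_{\eta_1^j}(\underline{t})$ and $E_{\eta_2^j}(\underline{t})$ is a single monic monomial in the $t_i$ whose combined exponents are nonnegative integers (here the hypothesis $p>2$ guarantees that the halved exponents combine to integers), its evaluation at $\underline{f}$ has leading term $Y$ raised to the corresponding integer combination of the $d_i$. Thus each of the six claimed leading terms amounts to one linear identity in the $d_i$, with no possibility of cancellation between distinct monomials.

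Claims (1), (2) and (3)(a) are then immediate. For (1), $p\,d_2+d_1=p^{s+1}+2p^{s-1}$. For (2), using $d_{j+2}=p^{s+j-1}+2p^{s-1-j}$ and $p^{j-1}(p^2-1)\,d_2=p^{s+j+1}-p^{s+j-1}$, the two middle powers cancel and leave $p^{s+j+1}+2p^{s-1-j}$. For (3)(a), $p^{j}(p-1)\,d_2+d_{j+3}=(p^{s+j+1}-p^{s+j})+(p^{s+j}+2p^{s-2-j})=p^{s+j+1}+2p^{s-2-j}$.

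For (3)(b) and (3)(c) I would first compute the partial sums $S_n:=\sum_{i=1}^{n}d_i$ by summing the two geometric progressions hidden in the $d_i$; one obtains $S_{j+2}=\bigl(p^{s+1}+p^{s+j}-2p^{s-1-j}\bigr)/(p-1)$ and $S_{j+3}=\bigl(p^{s+1}+p^{s+j+1}-2p^{s-2-j}\bigr)/(p-1)$. Substituting into the $Y$-exponent of $E_{\eta_1^j}(\underline{f})$, namely $d_2\bigl(p^{j+1}-\tfrac{p^j}{2}-p^{j-1}-\tfrac{p}{2}\bigr)+\tfrac{p-1}{2}S_{j+2}+d_{j+2}$, every intermediate power ($p^{s+j}$, $p^{s+j-1}$, $p^{s+1}$) cancels and what survives is exactly $p^{s+j+1}+p^{s-1-j}$. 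The analogous substitution for $E_{\eta_2^j}(\underline{f})$, with exponent $d_2\bigl(\tfrac{p^{j+1}}{2}-p^{j}-\tfrac{p}{2}\bigr)+\tfrac{p-1}{2}S_{j+3}+d_{j+3}$, yields $p^{s+j+1}+p^{s-2-j}$. The base case $j=0$ of (3)(b) is checked directly from $E_{\eta_1^0}(\underline{f})=f_1^{(p^2+1)/2}$, whose $Y$-exponent is $\tfrac{p^2+1}{2}\cdot 2p^{s-1}=p^{s+1}+p^{s-1}$.

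The computation has no structural difficulty; the only point requiring care is the cancellation in (3)(b) and (3)(c). Indeed the coefficients appearing in the exponents of $E_{\eta_1^j}$ and $E_{\eta_2^j}$ were manufactured precisely so that the contribution $\tfrac{p-1}{2}S$ telescopes against the $d_2$-term and the trailing factor, erasing all but the leading power $p^{s+j+1}$ and a single low power. Verifying these cancellations, together with checking that the combined $t$-exponents in $E_{\eta_1^j}$ and $E_{\eta_2^j}$ are genuinely nonnegative integers, is the entire content of the lemma, which is why it reduces to a routine check once the monomials $E_{\eta_1^j}$ and $E_{\eta_2^j}$ have been fixed.
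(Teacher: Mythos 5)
Your proposal is correct and takes essentially the same approach the paper intends: the paper gives no written proof of this lemma (it only remarks that, once the monomials $E_{\eta_1^j}$ and $E_{\eta_2^j}$ are fixed, the leading terms are ``easy to check''), and your argument—multiplicativity of leading terms in a domain with a monomial order, followed by the exponent arithmetic via the geometric sums $S_{j+2}$ and $S_{j+3}$—is precisely that routine verification, with all stated identities (including the integrality/nonnegativity of the combined exponents, e.g.\ $f_2$ appearing to the power $\tfrac{p-3}{2}$ in $E_{\eta_2^0}$) checking out.
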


\begin{proposition}\label{odd}
Let $r=2s-1\geq 3$ be an integer and $p>2$ be a prime number.
Let $f_1, f_2$ be the elements as in (\ref{f1}) and (\ref{f2}). We recursively define the elements 
$f_3,\ldots, f_{s+1}, f_{s+2}$ in ${\mathcal{K}}[X, Y, Z][X^{-1}]$ 
as follows:

\begin{align*}
& f_3  = \left(\tfrac{a_0}{2b_2}\right)\frac{f_1^p-f_2^2}{X^{p^s-2p^{s-2}}}-
\left(\tfrac{b_1}{b_2}\right)\cdot\frac{ f_1^{(p+1)/2}}{X^{p^{s-1}-2p^{s-2}}},\\\\
& f_4  = \tfrac{B(0)^{p+1}}{B(-1)^p B(1,0)}\cdot  \frac{F_4({\bf\underline{ f}})}{ 
X^{2p^{s-1}-2p^{s-3}}},\\\\
& f_{j+4}  = \tfrac{B(j)^{p+1}}{(B(j-1)^pB(j+1)}
\frac{F_{j+4}({\bf\underline{ f}})}{X^{2p^{s-1-j}-2p^{s-3-j}}},\quad\mbox{for}\quad
 0\leq j\leq s-3, \\\\
& f_{s+2}  = \tfrac{2^{p^{s-1}}B(s-2)^p}{B(s-3)^p} 
\cdot\frac{F_{s+2}({\bf\underline{ f}})}{X^{2p}}.
\end{align*}

Then  
\begin{enumerate}  
\item[(1)]
$f_1, \ldots, f_{s+1}, f_{s+2}$ are  homogeneous
polynomials and belong to  the 
ring ${\mathcal{K}}[X, Y, Z]^G$, 
\item[(2)] $ LT(f_{s+2}) = -Z^{p^r}$ and
$$LT(f_1) = Y^{2p^{s-1}}, \quad LT(f_2) = Y^{p^s}\quad 
 LT(f_{j+3}) = Y^{p^{s+j}+2p^{s-2-j}},$$
for $0\leq j \leq s-2$.
\end{enumerate}
\end{proposition}

\begin{rmk} Lemma~\ref{MLo} implies that    
$f_3, \ldots, f_{s+2}$ which are given as above are indeed well 
defined.
\end{rmk}

\begin{proof}We note that the elements $f_1$ and $f_2$ satisfy 
the conditions (1) and (2) of the above proposition.
Now we show that the element $f_3$ satisfies these conditions.

By (\ref{f1})  (recall
$B(-1, k) = a_k$ and $B(-1) = a_0$ and  $\Delta = Y^2-XZ$),

$$f_1  =  
\sum_{l=2}^{s+1}\left[\tfrac{B(-1, 2l-4)}{B(-1)}
\Delta^{p^{s-l+1}}X^{2p^{s-1}-2p^{s-l+1}}+
\tfrac{B(-1, 2l-3)}{B(-1)} Y^{p^{s-l+1}}X^{2p^{s-1}-p^{s-l+1}}\right]$$

and  (recall  $B(0, k) = b_{k+2}$ for $k+2 \leq r$)

\begin{multline*}
f_2 = Y^{p^s}- \tfrac{b_1}{a_0}Y^{p^{s-1}}X^{p^s-p^{s-1}} 
-
\sum_{l=2}^s\left[\tfrac{B(0, 2l-4)}{B(-1)}\Delta^{p^{s-l}}X^{p^s-2p^{s-l}}+
\tfrac{B(0, 2l-3)}{B(-1)}Y^{p^{s-l}}X^{p^s-p^{s-l}}\right].\end{multline*}

Let $I = X^{p^s}(X,Y)$ denote the ideal in ${\mathcal{K}}[X, Y, Z]$. 
Then 
$f_1^p\equiv_{I} Y^{2p^s}-X^{p^s}Z^{p^s}$ and 

\begin{multline*}f_2^2\equiv_{I} Y^{2p^s}
- \left(\tfrac{2b_1}{a_0}\right)Y^{p^{s}+p^{s-1}}X^{p^s-p^{s-1}}\\\\ 
-2Y^{p^s}
\sum_{l=2}^s\left[\tfrac{B(0, 2l-4)}{B(-1)}\Delta^{p^{s-l}}X^{p^s-2p^{s-l}}+
\tfrac{B(0, 2l-3)}{B(-1)}Y^{p^{s-l}}X^{p^s-p^{s-l}}\right].\end{multline*}

Also 
$$X^{p^s-p^{s-1}}f_1^{\frac{p+1}{2}}\equiv_{I}  X^{p^s-p^{s-1}}(\Delta^{p^{s-1}})^{\frac{p+1}{2}}
\implies 
X^{p^s-p^{s-1}}f_1^{\frac{p+1}{2}}\equiv_{I} 
Y^{p^s+p^{s-1}}X^{p^s-p^{s-1}}.$$

Hence
\begin{multline*}f_2^2-f_1^p +\tfrac{2b_1}{a_0}\cdot f_1^{(p+1)/2}X^{p^s-p^{s-1}}
\equiv_{I} X^{p^{s}}Z^{p^s}\\\\
-
2Y^{p^s}
\sum_{l=2}^s\left[\tfrac{B(0, 2l-4)}{B(-1)}\Delta^{p^{s-l}}X^{p^s-2p^{s-l}}+
\tfrac{B(0, 2l-3)}{B(-1)}Y^{p^{s-l}}X^{p^s-p^{s-l}}\right].\end{multline*}

This implies that the left hand side is divisible by $X^{p^s-2p^{s-2}}$ 
and therefore

\begin{equation}\label{f3}
f_3= -\left(\tfrac{a_0}{2b_2}\right)\cdot \frac{f_2^2- f_1^p+(2b_1/a_0)\cdot 
f_1^{(p+1)/2}X^{p^s-p^{s-1}}}{X^{p^s-2p^{s-2}}}\end{equation}
is an homogeneous element satisfying condition (1).

Further  for the ideal  $I_4 = X^{2p^{s-2}}(X, Y)$ 
we have 

\begin{multline}f_3  \equiv_{I_4}\\\ -
\tfrac{B(-1)}{2B(0)}X^{2p^{s-2}}Z^{p^s}+
Y^{p^s}
\sum_{l=2}^s\left[\tfrac{B(0, 2l-4)}{B(0)}\Delta^{p^{s-l}}X^{2p^{s-2}-2p^{s-l}}+
\tfrac{B(0, 2l-3)}{B(0)}Y^{p^{s-l}}X^{2p^{s-2}-p^{s-l}}\right]\end{multline}

which implies 
$LT(f_3) = Y^{p^s+2p^{s-2}}$.

\vspace{5pt}

\noindent{\bf Claim}.\quad 
 Let  $I_{j+3} = 
X^{2p^{s-1-j}}(X,Y)$ denote the ideal in ${\mathcal{K}}[X,Y,Z]$ and 
let  $0\leq j\leq s-3$.
\begin{enumerate}
 \item[(1)] If $f_3,\ldots, f_{j+3} \in {\mathcal{K}}[X,Y,Z]^G$ are homogeneous polynomials of degree 
 $p^{s+j}+2p^{s-2-j}$ and
 \item[(2)] the element $f_{j+3}$ mod the ideal $I_{j+4}$   has the  expression

\begin{multline}\label{f_{j+3}} f_{j+3}   \equiv_{I_{j+4}}  
-\left(\tfrac{B(j-1)}{2^{p^j}B(j)}\right)X^{2p^{s-2-j}}Z^{p^{s+j}}\\\
+ Y^{p^{s+j}}\sum_{l=2}^{s-j}\left[\tfrac{B(j,2l-4)}{B(j)}
\Delta^{p^{s-l-j}}X^{2p^{s-2-j}-2p^{s-l-j}}
+\tfrac{B(j,2l-3)}{B(j)}Y^{p^{s-l-j}}X^{2p^{s-2-j}-p^{s-l-j}}\right]
\end{multline}
then  the element $f_{j+4}$ satisfies  conditions (1) and (2).
\end{enumerate}

\noindent{\underline{Proof of the claim}}:\quad 
Note that the condition(2) implies that $LT(f_{j+3}) = Y^{p^{s+j}+2p^{s-2-j}}$. 
We will prove the claim along the following lines. Let $F_{j+4}({\underline {\bf f}})$, denote 
the evaluation of $F_{j+4}({\underline {\bf t}})$ at 
${\underline {\bf t}} = {\underline {\bf f}}$, where ${\underline {\bf f}} = (f_1, \ldots, f_{j+3})$. 
Then it 
is a homogeneous element in ${\mathcal{K}}[X,Y,Z]^G$ and of degree $2p^{s-1-j}+p^{s+1+j}$. We will show that 
$F_{j+4}({\underline {\bf f}})$ mod the ideal $I_{j+3}$ has the expression as in (\ref{Fj+44}). In particular $F_{j+4}({\underline {\bf f}})$ is divisible by 
$X^{2p^{s-1-j}-2p^{s-3-j}}$ in ${\mathcal{K}}[X, Y, Z]$.
Since $(X^{2p^{s-1-j}-2p^{s-3-j}})I_{j+5} = I_{j+3}$
we get the expression like (\ref{f_{j+3}}) for the  element $f_{j+4}$ mod the ideal $I_{j+5}$, where it is obvious that $f_{j+4}$ satisfies the condition (1) of the claim.
 
Note that
\begin{equation}\label{ideal}
(I_{j+4})^p \subseteq I_{j+3}\subseteq I_{j+4},\;\: \mbox{for}\;\; 
0\leq j\leq s-3.\end{equation}

\vspace{5pt}

We will make use of the following set of equalities, where Eq.(3), Eq.(4) and 
 Eq.(5)   can be proved  using  (\ref{f_{j+3}}) and the induction hypothesis.

\begin{enumerate}
\item[Eq.(1)] For every $j\geq 0$,
 we have  $f_2\equiv_{I_{j+3}}Y^{p^s}$.\\

\item[Eq.(2)] $f_2^pf_1 \equiv_{I_{j+3}} Y^{p^{s+1}}f_1$.\\

\item[Eq.(3)] For $1\leq j\leq s-2$,  
$f_2^{p^{j-1}(p^2-1)}f_{j+2}\equiv_{I_{j+3}} 
Y^{p^{s+j+1}-p^{s+j-1}}f_{j+2}.$\\
\item[Eq.(4)] For $0\leq j\leq s-2$,
$ f_2^{p^{j}(p-1)}f_{j+3}  \equiv_{I_{j+4}}
 Y^{p^{s+j+1}-p^{s+j}}f_{j+3}$, 

 in fact
$ f_2^{p^{j}(p-1)}f_{j+3}  \equiv_{I_{j+3}}
 Y^{p^{s+j+1}-p^{s+j}}f_{j+3}$.\\
\item[Eq.(5)]For  $0\leq j\leq s-2$,
\begin{enumerate}
\item[(i)] $X^{p^{s-1-j}}E_{1j}({\bf\underline{ f}}) \equiv_{I_{j+3}} 
X^{p^{s-1-j}}Y^{p^{s+1+j}+p^{s-1-j}},$
and 
\item[(ii)] 
$X^{2p^{s-1-j}-p^{s-2-j}}E_{2j}({\bf\underline{ f}}) \equiv_{I_{j+3}} 
X^{2p^{s-1-j}-p^{s-2-j}}Y^{p^{s+1+j}+p^{s-2-j}}.$
\end{enumerate}
\end{enumerate}

We prove the claim by induction on $j$, where $0\leq j\leq s-3$.

The claim holds for $j=0$.
We   assume that
the expression~(\ref{f_{j+3}}) holds for 
 $f_3, \ldots, f_{j+3}$. In particular 
$LT(f_{j_1+3}) = Y^{p^{s+j_1}+2p^{s-2-j_1}}$, for $0\leq j_1\leq j$.

 Evaluating $F_4({\bf\underline{ t}})$ at ${\bf\underline{t}} = 
{\bf\underline{ f}}$, where ${\bf\underline{f}} = (f_1, f_2, f_3)$ we get 

$$\begin{array}{lcl}
F_4({\bf\underline{ f}}) & = & f_3^p-  f_1f_2^p+ \left(N_{1,0}\right)\cdot E_{1,0}(\underline{f})X^{p^{s-1}}+
\left(L_{2,0}\right)\cdot f_2^{p-1}f_3
X^{2p^{s-1}-2p^{s-2}}\\\\
& & + \left(N_{2,0}\right) \cdot E_{2,0}(\underline{f})
X^{2p^{s-1}-p^{s-2}},\end{array}$$

and 
evaluating $F_{j+4}({\bf\underline{ t}})$ at ${\bf\underline{t}} = 
{\bf\underline{ f}}$, where ${\bf\underline{f}} = (f_1, f_2, \ldots, f_{j+3})$  and $j\geq 1$ we get 

$$\begin{array}{lcl}
F_{j+4}({\bf \underline{f}}) & = & f_{j+3}^p
-f_2^{p^{j-1}(p^2-1)}f_{j+2}+ \left(N_{1j}\right)\cdot 
E_{1j}({\bf\underline{ f}})X^{p^{s-1-j}}\\\\
& & + \left(L_{2j}\right)\cdot  
f_2^{p^{j}(p-1)}f_{j+3}X^{2p^{s-1-j}-2p^{s-2-j}}
+ \left(N_{2j}\right)\cdot E_{2j}({\bf\underline{ f}})X^{2p^{s-1-j}-p^{s-2-j}}.
\end{array}$$

Now to prove the claim 
it is enough to prove the following equality

\begin{multline}\label{Fj+44}
F_{j+4}({\bf\underline{ f}})  \equiv_{I_{j+3}}  
-\left(\tfrac{B(j-1)}{2^{p^j}B(j)}\right)^p\cdot
X^{2p^{s-1-j}}Z^{p^{s+1+j}}\\\\
+ \tfrac{B(j-1)^p}{B(j)^{p+1}}
 Y^{p^{s+j+1}}\sum_{l=2}^{s-(j+1)}\left[B(j+1,2l-4)
\Delta^{p^{s-l-(j+1)}}X^{2p^{s-(j+1)}-2p^{s-l-(j+1)}}\right.\\\\
\left.+ B(j+1,2l-3)
Y^{p^{s-l-(j+1)}}X^{2p^{s-(j+1)}-p^{s-l-(j+1)}}\right].
\end{multline}

We note that the second and third terms of 
$F_{j+4}({\underline{\bf{f}}})$  at $j=0$,
differ from the second and third  terms of $F_{4}({\underline{\bf{f}}})$. Hence
we consider the cases $j=0$ and $j\geq 1$ separately,
for the sum  of the first three terms.

\vspace{5pt}

\noindent{\underline{Case}}. Let $j=0$.

By definition  $I_3 = 
X^{2p^{s-1}}(X, Y)$. It is easy to check

\begin{multline*}f^p_3 \equiv_{I_3} -\left(\tfrac{B(-1)}{2B(0)}\right)^pX^{2p^{s-1}}
Z^{p^{s+1}}
+ Y^{p^{s+1}}\sum_{l=2}^{s}\left[\left(\tfrac{B(0,2l-4)^p}{B(0)^p}\right)
\Delta^{p^{s-l+1}}X^{2p^{s-1}-2p^{s-l+1}}\right.\\\\
\left.+\left(\tfrac{B(0,2l-3)^p}{B(0)^p}\right) Y^{p^{s-l+1}}X^{2p^{s-1}-p^{s-l+1}}
\right].
\end{multline*}

Therefore, by Eq.(2)

$$f_1f_2^p   \equiv_{I_3} Y^{p^{s+1}}
\sum_{l=2}^{s+1}\left[\tfrac{B(-1, 2l-4)}{B(-1)}
\Delta^{p^{s-l+1}}X^{2p^{s-1}-2p^{s-l+1}}+
\tfrac{B(-1, 2l-3)}{B(-1)} Y^{p^{s-l+1}}X^{2p^{s-1}-p^{s-l+1}}\right].$$

By definition  $N_{1,0}$
is given by 
\begin{equation}\label{N_0}
N_{1,0} +\tfrac{B(0,1)^p}{B(0)^p}-\tfrac{B(-1,1)}{B(-1)}
= 0\end{equation}
and therefore is in the field ${\mathcal{K}}$.
Then by Eq.(5)

\begin{multline}\label{u4}f_3^p-f_2^pf_1 + N_{1,0}\cdot 
E_{1,0}(\underline{f})
X^{p^{s-1}} \equiv_{I_3} -
\left(\tfrac{B(-1)}{2B(0)}\right)^p\cdot 
X^{2p^{s-1}}Z^{p^{s+1}}\\\\
+ Y^{p^{s+1}}\sum_{l=3}^{s}\left[\left(\tfrac{B(0,2l-4)^p}{B(0)^p}- \tfrac{B(-1, 2l-4)}{B(-1)}\right)
\Delta^{p^{s-l+1}}X^{2p^{s-1}-2p^{s-l+1}}\right.\\\\
\left.+\left(\tfrac{B(0,2l-3)^p}{B(0)^p}-
\tfrac{B(-1, 2l-3)}{B(-1)} Y^{p^{s-l+1}}X^{2p^{s-1}-p^{s-l+1}}\right)
\right]\\\\
+Y^{p^{s+1}}\left[-\tfrac{B(-1,r-1)}{B(-1)}
\Delta X^{2p^{s-1}-2}-\tfrac{B(-1,r)}{B(-1)}
YX^{2p^{s-1}-1}\right].\end{multline}

\vspace{5pt}

\noindent{\underline{Case}}.\quad Let $1\leq j\leq s-3$.

Since $(I_{j+4})^p \subseteq I_{j+3}$ we get

\begin{multline}\label{fj+3} f^p_{j+3}   \equiv_{I_{j+3}}  
-\left(\tfrac{B(j-1)^p}{2^{p^{j+1}}B(j)^p}\right)
X^{2p^{s-1-j}}Z^{p^{s+1+j}}\\\\
+ Y^{p^{s+j+1}}\sum_{l=2}^{s-j} \left[
\tfrac{B(j,2l-4)^p}{B(j)^p}
\Delta^{p^{s-l-j+1}}X^{2p^{s-1-j}-2p^{s-l-j+1}}
+\tfrac{B(j,2l-3)^p}{B(j)^p}
Y^{p^{s-l-j+1}}X^{2p^{s-1-j}-p^{s-l-j+1}}\right].
\end{multline}

By Eq.(3) we have 
\begin{multline}\label{f_2}
 f_2^{p^{j-1}(p^2-1)}f_{j+2}  \equiv_{I_{j+3}}  
Y^{p^{s+j+1}}\sum_{l=2}^{s-(j-1)}\left[\tfrac{B(j-1,2l-4)}{B(j-1)}
\Delta^{p^{s-l-(j-1)}}X^{2p^{s-2-(j-1)}-2p^{s-l-(j-1)}}\right.\\\\
\left.+\tfrac{B(j-1,2l-3)}{B(j-1)}Y^{p^{s-l-(j-1)}}X^{2p^{s-2-(j-1)}-p^{s-l-(j-1)}}\right].
\end{multline}

By (\ref{fj+3}) and (\ref{f_2}) we get

\begin{multline*} f^p_{j+3} -  f_2^{p^{j-1}(p^2-1)}f_{j+2}  \equiv_{I_{j+3}}  
-\left(\tfrac{B(j-1)^p}{2^{p^{j+1}}B(j)^p}\right)
X^{2p^{s-1-j}}Z^{p^{s+1+j}}\\\\
+ Y^{p^{s+j+1}}\left[\sum_{l=3}^{s-j} \left[\tfrac{B(j,2l-4)^p}{B(j)^p}-   
\tfrac{B(j-1,2l-4)}{B(j-1)}\right]
\Delta^{p^{s-l-j+1}}X^{2p^{s-1-j}-2p^{s-l-j+1}}\right.\\\\
\left.+\sum_{l=2}^{s-j} 
\left[\tfrac{B(j,2l-3)^p}{B(j)^p}-\tfrac{B(j-1, 2l-3}{B(j-1)}\right]
Y^{p^{s-l-j+1}}X^{2p^{s-1-j}-p^{s-l-j+1}}\right]\\\\
+Y^{p^{s+j+1}}\left[-\tfrac{B(j-1,r-2j-1))}{B(j-1)}
\Delta X^{2p^{s-1-j}-2} 
-\tfrac{B(j-1,r-2j)}{B(j-1)}Y X^{2p^{s-1-j}-1}\right].
\end{multline*}

By Eq.(5)(i),  for $1\leq j\leq s-3$, we have 
$X^{p^{s-1-j}}E_{1j}({\bf\underline{ f}}) \equiv_{I_{j+3}} 
X^{p^{s-1-j}}Y^{p^{s+1+j}+p^{s-1-j}}$.

Further we have
 $N_{1j}\in {\mathcal{K}}$
such that
\begin{equation}\label{N_1}
N_{1j} +\tfrac{B(j,1)^p}{B(j)^p}-\tfrac{B(j-1, 1)}{B(j-1)} = 0.\end{equation}

This gives
\begin{multline}\label{uj+4} f^p_{j+3} -  f_2^{p^{j-1}(p^2-1)}f_{j+2}  
+
(N_{1j})\cdot
E_{1j}({\bf\underline{ f}})X^{p^{s-1-j}}
\equiv_{I_{j+3}}  
-\left(\tfrac{B(j-1)^p}{2^{p^{j+1}}B(j)^p}\right)
X^{2p^{s-1-j}}Z^{p^{s+1+j}}\\\\
+ Y^{p^{s+j+1}}\left[\sum_{l=3}^{s-j} \left[\tfrac{B(j,2l-4)^p}{B(j)^p}-   
\tfrac{B(j-1,2l-4)}{B(j-1)}\right]
\Delta^{p^{s-l-j+1}}X^{2p^{s-1-j}-2p^{s-l-j+1}}\right.\\\\
\left.+\sum_{l=3}^{s-j} 
\left[\tfrac{B(j,2l-3)^p}{B(j)^p}-\tfrac{B(j-1, 2l-3}{B(j-1)}\right]
Y^{p^{s-l-j+1}}X^{2p^{s-1-j}-p^{s-l-j+1}}\right]\\\\
+Y^{p^{s+j+1}}\left[-\tfrac{B(j-1,r-2j-1))}{B(j-1)}
\Delta X^{2p^{s-1-j}-2} 
-\tfrac{B(j-1,r-2j)}{B(j-1)}Y X^{2p^{s-1-j}-1}\right].
\end{multline}

By (\ref{u4}) and (\ref{uj+4}) we have a uniform expression
for the first three terms of  
$F_{j+4}(\bf\underline{f})$. Henceforth we assume  $0\leq j\leq s-3$.

Now, by Eq.(4) we have  $f_2^{p^{j}(p-1)}f_{j+3}  \equiv_{I_{j+4}}
 Y^{p^{s+j+1}-p^{s+j}}f_{j+3}$ and, by definition 
$X^{2p^{s-1-j}-2p^{s-2-j}}I_{j+4} = I_{j+3}$. 

Therefore 
$$f_2^{p^{j}(p-1)}f_{j+3}X^{2p^{s-1-j}-2p^{s-2-j}}  \equiv_{I_{j+3}}
 Y^{p^{s+j+1}-p^{s+j}}f_{j+3}X^{2p^{s-1-j}-2p^{s-2-j}}.$$ 

Hence by (\ref{f_{j+3}}) and Eq.~(4) we get

\begin{multline*}
f_2^{p^{j}(p-1)}f_{j+3}X^{2p^{s-1-j}-2p^{s-2-j}}  \equiv_{I_{j+3}}\\\\
Y^{p^{s+j+1}}\sum_{l=2}^{s-j}
\left[\tfrac{B(j,2l-4)}{B(j)}
\Delta^{p^{s-l-j}}X^{2p^{s-1-j}-2p^{s-l-j}}
 +\tfrac{B(j,2l-3)}{B(j)}
Y^{p^{s-l-j}}X^{2p^{s-1-j}-p^{s-1-j}}\right].\end{multline*}

We recall  that $L_{2j}\in {\mathcal{K}}$ such that 
\begin{equation}\label{L_2}L_{2j} + \tfrac{B(j,2)^p}{B(j)^p}-\tfrac{B(j-1,2)}{B(j-1)}
 = 0.\end{equation}
Then 
\begin{multline*}f_{j+3}^p- f_2^{p^{j-1}(p^2-1)}f_{j+2} +(N_{1j})\cdot
E_{1j}({\bf\underline{ f}})X^{p^{s-1-j}} + \left(L_{2j}\right)\cdot
f_2^{p^{j}(p-1)}f_{j+3}X^{2p^{s-1-j}-2p^{s-2-j}}\\\\
\equiv_{I_{j+3}}
-\left(\tfrac{B(j-1)^p}{2^{p^{j+1}}B(j)^p}\right)
X^{2p^{s-1-j}}Z^{p^{s+1+j}}\\\\
+ Y^{p^{s+j+1}}\left(\sum_{l=4}^{s-j} \left[
L_{2j}\tfrac{B(j,2l-6)}{B(j)}+
\tfrac{B(j,2l-4)^p}{B(j)^p}-   
\tfrac{B(j-1,2l-4)}{B(j-1)}\right]
\Delta^{p^{s-l-j+1}}X^{2p^{s-1-j}-2p^{s-l-j+1}}\right.\\\\
\left.+\sum_{l=3}^{s-j} 
\left[L_{2j}\tfrac{B(j,2l-5)}{B(j)}+
\tfrac{B(j,2l-3)^p}{B(j)^p}-\tfrac{B(j-1, 2l-3}{B(j-1)}\right]
Y^{p^{s-l-j+1}}X^{2p^{s-1-j}-p^{s-l-j+1}}\right)\\\\
+Y^{p^{s+j+1}}\left[
L_{2j}\tfrac{B(j,r-2j-3)}{B(j)}
-\tfrac{B(j-1,r-2j-1))}{B(j-1)}\right]
\Delta X^{2p^{s-1-j}-2}\\\\
+Y^{p^{s+j+1}}\left[
L_{2j}\tfrac{B(j,r-2j-2)}{B(j)}+
-\tfrac{B(j-1,r-2j)}{B(j-1)}\right]Y X^{2p^{s-1-j}-1}.\end{multline*}

By Eq.(5)(ii),  we have 
$X^{2p^{s-1-j}-p^{s-2-j}}E_{2j}({\bf\underline{ f}}) \equiv_{I_{j+3}} 
X^{2p^{s-1-j}-p^{s-2-j}}Y^{p^{s+1+j}+p^{s-2-j}}$, for $0\leq j \leq s-3$.
By definition for $N_{2j}\in {\mathcal{K}}$ 
 such that 
\begin{equation}\label{N_2}
N_{2j}+L_{2j}\cdot \tfrac{B(j,1)}{B(j)} + 
\tfrac{B(j,3)^p}{B(j)^p}-\tfrac{B(j-1,3)}{B(j-1)}= 0.\end{equation}

This gives

\begin{multline}
 F_{j+4}({\bf \underline{f}})  
\equiv_{I_{j+3}} -\left(\tfrac{B(j-1)^p}{2^{p^{j+1}}B(j)^p}\right)
X^{2p^{s-1-j}}Z^{p^{s+1+j}} \\\\
 + Y^{p^{s+1+j}}\left[\sum_{l=4}^{s-j}\left(
L_{2j}\cdot \tfrac{B(j,2l-6)}{B(j)} +\tfrac{B(j,2l-4)^p}{B(j)^p}-\tfrac{B(j-1,2l-4)}{B(j-1)}\right)
\Delta^{p^{s-l+1-j}}X^{2p^{s-1-j}-2p^{s-l+1-j}}\right. \\\\ 
+\left.\left(L_{2j}\cdot \tfrac{B(j,2l-5)}{B(j)} +\tfrac{B(j,2l-3)^p}{B(j)^p}-\tfrac{B(j-1,2l-3)}{B(j-1)}\right)
Y^{p^{s-l+1-j}}X^{2p^{s-1-j}-p^{s-l+1-j}}\right]\\\\
+ Y^{p^{s+1+j}}
\left[L_{2j}\cdot \tfrac{B(j,r-2j-3)}{B(j)}
-\tfrac{B(j-1,r-2j-1)}{B(j-1)}\right]\Delta X^{2p^{s-1-j}-2}\\\\
+
Y^{p^{s+1+j}}
\left[L_{2j}\cdot \tfrac{B(j,r-2j-2)}{B(j)}- \tfrac{B(j-1,r-2j)}{B(j-1)}\right]YX^{2p^{s-1-j}-1}.\end{multline}

We recall that $B(j ,r-2j-1) = B(j, r-2j) = 0$.
 The claim follows from Lemma~\ref{MLo}.

\vspace{5pt}

Now we have homogeneous polynomials  $f_1, \ldots , f_{s+1}$ 
in ${\mathcal{K}}[X, Y, Z]^G$ with the leading terms as in assertion~(2) of the 
proposition. It only remains to prove the assertion~(1) and (2)  for $f_{s+2}$.

Note that the number of monomials in $f_{j+3}$
modulo ${I_{j+4}}$ is $r-2j$. 
In particular, for $I_{s+2} = X^2(X, Y)$

$$f_{s+1} = f_{(s-2)+3} \equiv_{I_{s+2}} -\tfrac{B(s-3)}{2^{p^{s-2}}B(s-2)}X^2Z^{p^{2s-2}} 
+\Delta Y^{p^{2s-2}} + \tfrac{B(s-2,1)}{B(s-2)}Y^{p^{2s-2}+1} X.$$
Therefore (recall  $I_{s+1} = I_{(s-2)+3} = X^{2p}(X, Y)$)

$$f^p_{s+1} \equiv_{I_{s+1}} -\left(\tfrac{B(s-3)}{2^{p^{s-2}}B(s-2)}\right)^p
X^{2p}Z^{p^{2s-1}} 
+\Delta^p Y^{p^{2s-1}} + \left(\tfrac{B(s-2,1)}{B(s-2)}\right)^p Y^{p^{2s-1}+p} X^p.$$

Consider
$$\begin{array}{lcl}F_{s+2}({\bf \underline{f}}) & = & f_{s+1}^p -
f_2^{p^{s-3}(p^2-1)}f_{s}+  \left(N_{1,{s-2}}\right)\cdot 
E_{1,{s-2}}({\bf \underline{f}})X^{p}\\\
&&+ \left(L_{2,{s-2}}\right)
\cdot  f_2^{p^{s-2}(p-1)}f_{s+1}X^{2p-2}
+ \left(N_{2,{s-2}}\right) \cdot E_{2,{s-2}}({\bf \underline{f}})X^{2p-1}.\end{array}$$

Note, by Eq.(3)

$$\begin{array}{lcl}f_2^{p^{s-3}(p^2-1)}f_s & \equiv_{I_{s+1}} & 
\Delta^{p}Y^{p^{2s-1}}+
\tfrac{B(s-3,1)}{B(s-3)}Y^{p^{2s-1}+p}X^p \\\\
& & + \tfrac{B(s-3,2)}{B(s-3)}\Delta Y^{p^{2s-1}}X^{2p-2} + 
\tfrac{B(s-3,3)}{B(s-3)}
Y^{p^{2s-1}+1}X^{2p-1}.\end{array}$$

Further, by definition,
$N_{1,{s-2}}\in {\mathcal{K}}$
such that
\begin{equation*}
N_{1,{s-2}} +\tfrac{B(s-2,1)^p}{B(s-2)^p}-\tfrac{B(s-3,1)}{B(s-3)}
= 0.\end{equation*}

Then, by Eq.(5)

\begin{multline*}f_{s+1}^p -
f_2^{p^{s-3}(p^2-1)}f_{s}+  \left(N_{1,{s-2}}\right)\cdot 
E_{1,{s-2}}({\bf \underline{f}})X^{p}
\equiv_{I_{s+1}}\\\\ 
-\left(\tfrac{B(s-3)}{2^{p^{s-2}}B(s-2)}\right)^p
X^{2p}Z^{p^{2s-1}}
- \tfrac{B(s-3,2)}{B(s-3)}\Delta Y^{p^{2s-1}}X^{2p-2} -
\tfrac{B(s-3,3)}{B(s-3)}Y^{p^{2s-1}+1}X^{2p-1}.
\end{multline*}

By definition  $L_{2,{s-2}}\in {\mathcal{K}}$ such that 
\begin{equation*}L_{2,{s-2}} + \tfrac{B(s-2,2)^p}{B(s-2)^p}-
\tfrac{B(s-3,2)}{B(s-3)} = 
L_{2,{s-2}} -\tfrac{B(s-3,2)}{B(s-3)} = 
0\end{equation*}

and  $N_{2,{s-2}}\in {\mathcal{K}}$ such that 
\begin{equation*}N_{2,{s-2}}+L_{2,{s-2}}\cdot \tfrac{B(s-2,1)}{B(s-2)} -\tfrac{B(s-3,3)}{B(s-3)}= 0.\end{equation*}

This gives  
$$  F_{s+2}({\bf\underline{f}}) \equiv_{I_{s+1}} -\left(\tfrac{B(s-3)}{2^{p^{s-2}}B(s-2)}\right)^p
X^{2p}Z^{p^{2s-1}}.$$

By definition
\begin{equation}\label{fs+2}
f_{s+2} = \left(\tfrac{2^{p^{s-1}}B(s-2)^p}{B(s-3)^p}\right)
\frac{F_{s+2}({\bf \underline{f}})}{X^{2p}}\quad\mbox{which implies}\quad
f_{s+2}\equiv_{I_{s+3}} -Z^{p^{2s-1}}.
\end{equation}

Hence $LT(f_{s+2}) = -Z^{p^r}$. \end{proof}

Now we prove the lemma which played a crucial role for the  induction process
 in Proposition~\ref{odd}, that is, to construct  $f_{j+4}$ from 
$f_1, \ldots, f_{j+3}$.

Moreover it gives an explicit 
 formula  of $f_{j+4}$, mod the ideal  $X^{2p^{s-3-j}}(X, Y)$,
in terms of the minors $B(j+1,k)$, $B(j)$ and $B(j-1)$, which might be of use 
to compute the invariants of all three dimensional representations of $(\Z/p\Z)^r$.

\begin{lemma}\label{MLo} 
If  $0\leq j\leq s-2$ and 
$0\leq k \leq r-2(j+1)-2$, then  
$$A^{(j+1)}_{k} := L_{2j}\cdot \tfrac{B(j,2+k)}{B(j)} + 
\tfrac{B(j,4+k)^p}{B(j)^p}-\tfrac{B(j-1,4+k)}{B(j-1)} 
= \tfrac{B(j-1)^p B(j+1,k)}{B(j)^{p+1}},$$
where 
$L_{2j} = \tfrac{B(j-1,2)}{B(j-1)} - \tfrac{B(j,2)^p}{B(j)^p}$ and  
$B(j, r-2j-1) = B(j, r-2j) = 0$.

In particular $A_k^{(j+1)}\neq 0$, if $0\leq k \leq r-2j-4$.
\end{lemma}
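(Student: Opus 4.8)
The plan is to convert the rational identity into a polynomial identity among the minors of $\Gamma$ and then to deduce it from the Pl\"ucker relations of Theorem~\ref{PS}. First I would substitute the given value of $L_2^j=B(j-1,2)/B(j-1)-B(j,2)^p/B(j)^p$ and clear all denominators by multiplying through by $B(j-1)B(j)^{p+1}$. After grouping the four resulting terms in pairs, the assertion $A^{(j+1)}_k=B(j-1)^pB(j+1,k)/B(j)^{p+1}$ becomes the cubic identity
\begin{multline*}
B(j,2+k)\bigl(B(j-1,2)B(j)^p-B(j-1)B(j,2)^p\bigr)\\
+B(j)\bigl(B(j-1)B(j,4+k)^p-B(j-1,4+k)B(j)^p\bigr)=B(j-1)^{p+1}B(j+1,k),
\end{multline*}
in which every term is a product of three minors of $\Gamma$ (counting $B(\,\cdot\,)^p$ as one minor via the next step).

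Next I would exploit the Frobenius structure of $\Gamma$: the odd rows are the successive $p$-power iterates of the first row and the even rows those of the second, so raising every entry of $\Gamma$ to the $p$-th power shifts all rows down by two. Since determinants are polynomial in the entries and we work in characteristic $p$, this gives $\nu_{i_1,\ldots,i_r}^{\,p}=\nu_{i_1+2,\ldots,i_r+2}$ for every minor. Hence each factor $B(\,\cdot\,)^p$ above is again a single minor of $\Gamma$, namely the one obtained by raising all its row indices by $2$; these indices remain $\le 2r+2$ precisely because $j\le s-2$, which is exactly the range of the lemma. After this substitution the displayed formula is a genuine homogeneous polynomial identity purely among minors of $\Gamma$.

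The heart of the argument is then to read off each bracketed factor from a Pl\"ucker relation. A direct comparison of row sets shows that the two products $B(j-1,2)\,B(j)^p$ and $B(j-1)\,B(j,2)^p$ in the first bracket involve exactly the same multiset of rows of $\Gamma$, and the same holds for the second bracket; this is precisely the situation in which Theorem~\ref{PS}, applied to suitably chosen sequences of lengths $r-1$ and $r+1$, collapses (all but the relevant minors vanish because a row is repeated) to a three-term relation. Each bracket therefore equals a single signed product of two minors, and substituting these back and applying one further straightening step of the same type yields the right-hand side $B(j-1)^{p+1}B(j+1,k)$ with the correct sign. I expect the main obstacle to be the combinatorial bookkeeping rather than any conceptual difficulty: one must verify the multiset coincidences, identify which Pl\"ucker summands survive, and control all signs, and one must separately handle the extreme values of $k$, where the convention $B(j,k)=0$ for $k>r-2j-2$ degenerates some minors and forces a few terms to drop out.
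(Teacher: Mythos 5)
Your reduction to a polynomial identity is correct and coincides with the paper's first step (the cleared\-/denominator identity is the same sum of four triple products, only grouped differently), and your use of the Frobenius shift $\nu_{i_1,\ldots,i_r}^{\,p}=\nu_{i_1+2,\ldots,i_r+2}$ is also correct and is exactly how the paper treats the factors $B(\cdot)^p$ as minors of $\Gamma$. The gap is in your key step: the claim that each of your brackets collapses under Theorem~\ref{PS} to a \emph{three}-term relation, i.e.\ equals a single signed product of two minors. Equality of row multisets never forces a three-term relation by itself, and for your brackets it provably fails. Consider $B(j-1,2)B(j)^p-B(j-1)B(j,2)^p$ with $j\ge 1$. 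The unshifted minors $B(j-1,2)$, $B(j-1)$ have all rows in $\{1,\ldots,r+2j\}$ and both contain row $1$, while the shifted minors $B(j)^p$, $B(j,2)^p$ have all rows in $\{3,\ldots,r+2j+4\}$ and both contain rows $r+2j+2$ and $r+2j+4$. The two products differ by the single swap $r-2j-1\leftrightarrow r-2j+1$, so any instance of Theorem~\ref{PS} whose sum contains both products must take $(i_1,\ldots,i_{r-1})$ to be the common rows of one matched pair of factors and $(j_1,\ldots,j_{r+1})$ to be the rows of the other pair together with the swapped row; in either of the two possible choices the set $J\setminus I$ consists of exactly \emph{four} rows (the two swapped rows together with either $\{1,2\}$ or $\{r+2j+2,\,r+2j+4\}$), so four terms of the Pl\"ucker sum survive, not three. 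Hence each of your brackets equals a sum of \emph{two} nonzero products of minors, the asserted ``single signed product'' is false, and the concluding ``one further straightening step'' has no basis as written.

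This mismatch is precisely why the paper groups the four terms the other way, factoring out $B(j-1)$ and $B(j)^p$ rather than your $B(j,2+k)$ and $B(j)$. Its first bracket, $B(j-1,2)B(j,2+k)-B(j-1,4+k)B(j)$, involves only unshifted minors, so the only boundary mismatch is the single top row $r+2j+2$ and one genuinely gets a three-term relation, equal to $\nu\cdot B(j-1)$ for an intermediate minor $\nu$; its second, mixed bracket $B(j)B(j,4+k)^p-B(j,2+k)B(j,2)^p$ gives a four-term relation whose two extra products are exactly $-B(j)^p\,\nu$ and $+B(j+1,k)\,B(j-1)^p$, so that the $\nu$-terms cancel when the brackets are recombined. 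Your pairing could in principle be completed along similar lines: each of your brackets is a two-product sum, the leading cross terms cancel after multiplying by $B(j,2+k)$ and $B(j)$, and one final three-term relation identifies the remainder with $B(j-1)^{p+1}B(j+1,k)$; but that is a longer argument with additional cancellations and sign bookkeeping that your proposal neither states nor proves. Note also that the case $j=0$ needs separate treatment (as Case~1 in the paper), since $B(-1,k)=a_k$ does not follow the alternating-row pattern of $B(j,k)$ for $j\ge 0$, so the row-set computations above change there.
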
 
\begin{proof}We can rewrite 
\begin{multline*}
A^{(j+1)}_{k} = \tfrac{1}{B(j)^{p+1}B(j-1)}\Bigl[B(j-1)\Bigl(B(j)B(j, 4+k)^p-
B(j,2)^pB(j, 2+k)\Bigr)\Bigr. \\
+ \Bigl.B(j)^p\Bigl(B(j-1,2)B(j, 2+k)-
B(j)B(j-1, 4+k)\Bigr)\Bigr].\end{multline*}

Following Notation~\ref{n2}, we have $B(0, k) = b_{k+2}$ and $B(-1, k) = a_k$.

\vspace{5pt}

\noindent{\underline{Case}~(1)}.\quad Let $j=0$ and $0\leq k\leq r-4$. Then 

$$A^{(1)}_k = \frac{1}{b_2^{p+1}a_0}
\left[a_0\left(b_2b_{6+k}^p-b_{4+k}b_4^p\right)+
b_2^{p}\left(a_2b_{4+k}- a_{4+k}b_2\right)\right].$$

Now, applying  Theorem~\ref{PS}
to the pair
$$(1, \ldots, {\hat{r-3-k}}, \ldots, {\hat{r-1}}, r, r+1)\quad\mbox{and}\quad
(1, \ldots, r, {\hat{r+1}}, r+2),$$
we get 
 $$a_2 b_{4+k}-a_{4+k}b_2 = a_0\cdot \left(\nu_{1, \ldots, 
{\hat {r-3-k}},\ldots, {\hat{r-1}}, r, r+1, r+2}\right).$$

Similarly the pair
 $$(1, \ldots, {\hat{r-3-k}}, \ldots, {\hat{r-1}}, r, {\hat {r+1}}, r+2)
\quad\mbox{and}\quad
(3, \ldots, r+2, {\hat{r+3}}, r+4)$$
gives, 
$$-b_2b_{6+k}^p+b_4^pb_{4+k} -b_2^p\cdot \left(\nu_{1, \ldots, 
{\hat {r-3-k}},\ldots, {\hat{r-1}}, r, r+1, r+2}\right)
+a_0^p\cdot \left(\nu_{1, \ldots, 
{\hat {r-3-k}},\ldots, \{{\hat{r-1}}, \ldots, r+4\}}\right) = 0,$$
where $b_{6+k}^p = 0$ if  $r-5\leq k\leq r-4$.

Therefore
 $$a_0\left[b_2b_{6+k}^p-b_4^pb_{4+k}\right] + 
b_2^p\left[a_2 b_{4+k}-a_{4+k}b_2\right] 
= a_0^{p+1} 
\cdot \left(\nu_{1, \ldots, 
{\hat {r-3-k}},\ldots, \{{\hat{r-1}}, \ldots,  r+4\}}\right).$$
This gives  

$$A^{(1)}_k = \frac{a_0^p}{b_2^{p+1}}B(1,k) = 
\tfrac{B(-1)^p}{B(0)^{p+1}}\cdot B(1,k).$$

\vspace{5pt}

\noindent{\underline{Case}~(2)}.\quad Let $j\geq 1$ and $0\leq k\leq r-2(j+1)-2$. 
Now 
$$B(j-1, 2)B(j, 2+k) = \left(\nu_{1, \ldots, {\hat {r-2j-1}}, \ldots, 
\{{\hat{r-2j+3}}, \ldots, r+2j\}}\right)\cdot 
\left( \nu_{1, \ldots, {\hat {r-2j-3-k}}, \ldots, 
\{{\hat{r-2j+1}}, \ldots, r+2j+2\}}\right)$$
and 
$$B(j-1, 4+k)B(j) = \left(\nu_{1, \ldots, {\hat {r-2j-3-k}}, \ldots, 
\{{\hat{r-2j+3}}, \ldots, r+2j\}}\right)\cdot 
\left( \nu_{1, \ldots, 
\{{\hat{r-2j-1}}, \ldots, r+2j+2\}}\right).$$

Using Theorem~\ref{PS} for the pair 
$$\left(1,  \ldots, {\hat {r-2j-3-k}}, \ldots, {\hat {r-2j-1}}, \ldots, 
\{{\hat{r-2j+3}}, \ldots, r+2j\}\right),$$
$$\left(1, \ldots, \{{\hat{r-2j+1}}, \ldots, r+2j+2\}\right)$$

 we can check
\begin{multline*}B(j-1, 2)B(j, 2+k)
-B(j-1, 4+k)B(j) \\
= \left({\nu}_{1, \ldots, {\hat {r-2j-3-k}}, \ldots, {\hat {r-2j-1}},
 \ldots, \{{\hat {r-2j+3}}, \ldots, r+2j+2\}}\right)\cdot B(j-1).\end{multline*}
On the other hand 
$$B(j)B(j, 4+k)^p = \left( \nu_{1, \ldots, 
\{{\hat{r-2j-1}}, \ldots, r+2j+2\}}\right)\cdot 
\left( \nu_{3, \ldots,{\hat{r-2j-3-k}}, \ldots,
\{{\hat{r-2j+3}}, \ldots, r+2j+4\}}\right)$$
and 
$$B(j, 2+k)B(j,2)^p = \left( \nu_{1, \ldots, {\hat{r-2j-3-k}},\ldots 
\{{\hat{r-2j+1}}, \ldots, r+2j+2\}}\right)\cdot 
\left( \nu_{3, \ldots,{\hat{r-2j-1}}, \ldots,
\{{\hat{r-2j+3}}, \ldots, r+2j+4\}}\right).$$
Now using Theorem~\ref{PS} for the pair
$$ \left(1, \ldots, {\hat{r-2j-3-k}},\ldots 
\{{\hat{r-2j-1}}, \ldots, r+2j+2\}\right),\quad 
\left( 3,  \ldots,
\{{\hat{r-2j+3}}, \ldots, r+2j+4\}\right)$$
we get

\begin{multline*} B(j)B(j, 4+k)^p-
B(j, 2+k)B(j, 2)^p = -B(j)^p\cdot 
\left({\nu}_{1, \ldots, {\hat {r-2j-3-k}}, \ldots, {\hat {r-2j-1}},
 \ldots, \{{\hat {r-2j+3}}, \ldots, r+2j+2\}}\right)\\
+ \left({\nu}_{1, \ldots, {\hat {r-2j-3-k}}, \ldots,
  \{{\hat {r-2j-1}}, \ldots, r+2j+4\}}\right)\cdot B(j-1)^p,
\end{multline*}
where $B(j, 4+k)^p = 0$ if $r-2j-5\leq k \leq r-2j-4$.

Therefore 
\begin{multline*}
B(j)^p\left(B(j-1, 2)B(j, 2+k)
-B(j-1, 4+k)B(j)\right)\\
+B(j-1)\left(B(j)B(j, 4+k)^p-
B(j, 2+k)B(j, 2)^p\right)\\
= B(j-1)^{p+1}\left({\nu}_{1, \ldots, {\hat {r-2j-3-k}}, \ldots,
  \{{\hat {r-2j-1}}, \ldots, r+2j+4\}}\right) 
= B(j-1)^{p+1}B(j+1, k).
\end{multline*}

This implies
$$A^{(j+1)}_k = \left[\tfrac{B(j-1)^{p}}{B(j)^{p+1}}\right]\cdot B(j+1,k).$$
\end{proof}

\section{$r=2s$ is even}

Throughout this section ${\mathcal{K}} = \F_p(x_{ij})$ and   $G= (\Z/p\Z)^r$ acts
on  ${\mathcal{K}}[X, Y, Z]$ as in (\ref{e1}). Here we deal with the case when 
$r= 2s \geq 4$ is an even
integer and $p>3$ is a prime number. The proof for $r=4$ is same as 
in [PS].

In Proposition~\ref{even}, we construct  elements 
$\{f_1, f_1, \ldots, f_{s+2}\}$ in ${\mathcal{K}}[X, Y, Z]^G$ such that
they  have the leading monomials
(in fact the leading terms) as stated in the part~(1) of the conjecture of 
~\cite{CSW}.

Following \cite{CSW} we  define the elements  $g_1$, ${\bar g_2}$ and $g_2$ in ${\mathcal{K}}[X, Y, Z]$.

Let $g_1 := f_{(1, 2, \ldots, r, r+1)}$ and ${\bar g_2}:= f_{(1,2, \ldots, r, r+2)}$.
Then 
$$ g_1 = c_0Y^{p^s}+
c_1\Delta^{p^{s-1}}X^{p^s-2p^{s-1}}
+c_2Y^{p^{s-1}}X^{p^s-p^{s-1}}
+c_3\Delta^{p^{s-2}}X^{p^{s}-2p^{s-2}}+ \ldots + c_rYX^{p^{s}-1}$$

and 
\begin{multline*}{\bar g_2}  = -c_0\Delta^{p^s}+
d_1\Delta^{p^{s-1}}X^{2p^s-2p^{s-1}}
+d_2Y^{p^{s-1}}X^{2p^s-p^{s-1}}
+d_3\Delta^{p^{s-2}}X^{2p^{s}-2p^{s-2}}\\
+ \ldots + d_rYX^{2p^s-1},\end{multline*}
where $c_i, d_i\in {\mathcal{K}}\setminus \{0\}$  are given as in 
Notation~\ref{ne}.
 
Let $f_1 = g_1/c_0$ and let ${\bar f_2} = {\bar g_2}/c_0$. Then 

\begin{multline}\label{f1e}
f_1 = \\Y^{p^s}+
\tfrac{c_1}{c_0}\Delta^{p^{s-1}}X^{p^s-2p^{s-1}}
+\tfrac{c_2}{c_0}Y^{p^{s-1}}X^{p^s-p^{s-1}}
+\tfrac{c_3}{c_0}\Delta^{p^{s-2}}X^{p^{s}-2p^{s-2}}+ \ldots 
+\tfrac{c_r}{c_0}YX^{p^{s}-1}\end{multline}
and 

\begin{multline*}{\bar f_2}  
 = -\Delta^{p^s}+
\tfrac{d_1}{c_0}\Delta^{p^{s-1}}X^{2p^s-2p^{s-1}}
+\tfrac{d_2}{c_0}Y^{p^{s-1}}X^{2p^s-p^{s-1}}
+\tfrac{d_3}{c_0}\Delta^{p^{s-2}}X^{2p^{s}-2p^{s-2}}\\
+ \ldots + \tfrac{d_r}{c_0}YX^{2p^s-1}.\end{multline*}

Let 
\begin{equation}\label{f2e}f_2 = \left(\tfrac{c_0}{2c_1}\right)\cdot 
\frac{{\bar f_2}+f_1^2}{X^{p^s-2p^{s-1}}}.
\end{equation}
Then  
\begin{equation}\label{*1} f_2^p+{\bar f_2}f_1^p = f_2^p+
\left(\tfrac{2c_1}{c_0}X^{p^s-2p^{s-1}}f_2  -f_1^2) \right)f_1^p = 
f_2^p -f_1^pf_1^2 +\tfrac{2c_1}{c_0}X^{p^s-2p^{s-1}}f_2 f_1^p.\end{equation}

\begin{notation}\label{ne}Here $r=2s$. The elements $c_i$ and $d_i$, 
occurring in (\ref{f1e}) and (\ref{f2e}), are  the $r\times r$ minors 
of the matrix $\Gamma$ 
and are given as follows: A subsequence 
$(j_1,  \ldots, j_{r})$ of $(1, 2, r, r+1, r+2)$, 
determines a $r\times r$ minor, $\nu_{j_1,  \ldots, j_{r}}$,
 of the $(2r+2)\times r$ matrix $\Gamma$. For $0\leq m\leq r$
we recall that 

$$c_0=\nu_{1,2,\cdots,r}, \quad 
c_1 = \nu_{1,2,\cdots, {\hat r}, r+1}, \quad \ldots, \quad 
c_m = \nu_{1,2,\cdots, {\hat {r+1-m}},\cdots,  r+1}$$
and $$d_0=\nu_{1,2,\cdots,r}, \quad 
d_1 = \nu_{1,2,\cdots, r-1, r+2}, \quad \ldots, \quad 
d_m = \nu_{1,2,\cdots, {\hat {r+1-m}},\cdots, r, r+2}.$$

For $0\leq m\leq r$ we have 
$c^p_m = \nu_{3,4,\cdots, {\hat {r+3-m}},\cdots, r+2,  r+3}$,
and $c_m = d_m = 0$ if  $m >r$.

Further, let $\{l_1, \ldots, l_{2m+2n+1}\}$ be a set of $2m+2n+1$ 
 integers.
Then we denote
$$\nu_{l_1, l_2, \ldots, 
\{{\hat {l_{2m}}},  \ldots, l_{2m+2n+1}\}} = 
\nu_{l_1, l_2, \ldots, l_{2m-1}, {\hat {l_{2m}}},
l_{2m+1}, {\hat {l_{2m+2}}}, l_{2m+3},{\hat {l_{2m+4}}},
\ldots, l_{2m+2n+1}}.$$

\begin{enumerate}
\item 
For $0\leq j\leq s-2$ and for $0\leq k < r-2j-2$ we denote 
$$C(j, k+1) = \nu_{1,2, \ldots,  
{\hat{r-2j-2-k}}, \ldots,\{{\hat{r-2j}}, \ldots, r+2j+3\}}\neq 0$$
and 
$$C(j) := C(j, 1) = C(j, 0+1)  = \nu_{1,2, \ldots,  
\{{\hat{r-2j-2}}, \ldots, r+2j+3}\}\neq 0$$
and for $j=-1$ we define 
$$C(-1,0) = C(-1, -1+1) = \nu_{1,2,  \ldots, r, {\hat{r+1}}} = c_0$$ and for $k\geq 0$,
$C(-1, k+1) = \nu_{1,2, \ldots,  
{\hat{r-k}}, \ldots, r+1} = c_{k+1}$ and 
$$C(-1) = C(-1, 1) = \nu_{1,2, \ldots, r-1, 
{\hat{r}}, r+1} = c_1.$$

We define $C(j,k+1) = 0$ if $k\geq r-2j-2$.\\

\item For $0\leq j\leq s-2$,  $N_{1j}$, $L_{2j}$ and $N_{2j}$ are the elements of ${\mathcal{K}}$ 
given as
$$N_{1j} =  \tfrac{C(j-1, 2)}{C(j-1)} -\tfrac{C(j,2)^p}{C(j)^p},\quad 
 L_{2j} = \tfrac{C(j-1,3)}{C(j-1)} -\tfrac{C(j,3)^p}{C(j)^p}$$
  
$$\mbox{and}\quad N_{2j} = \tfrac{C(j-1,4)}{C(j-1)}- L_{2j}\cdot \tfrac{C(j,2)}{C(j)} - 
\tfrac{C(j,4)^p}{C(j)^p}.$$

For $j=-1$, let $N_{1,-1} = -{c_2^p}/{c_1^p}$, 
 $$L_{2,-1} = 
  \left(\frac{c_2}{c_1}\right)^p\frac{c_1}{c_0} -
 \left(\frac{c_3}{c_1}\right)^p - \frac{d_1}{c_0}$$
 and 
 $$N_{2,-1} = -L_{2,-1}\cdot \frac{c_{2}}{c_1}  
+\left(\frac{c_2}{c_1}\right)^p\frac{c_2}{c_0} - 
\left(\frac{c_4}{c_1}\right)^p -\frac{d_2}{c_0}.$$

\item We consider the following set of elements in the polynomial ring 
 ${\mathcal{K}}[t_1, \ldots, t_{s+2}]$,
where $E_{1j}({\bf \underline{t}})\in 
{\mathcal{K}}[t_1, \ldots, t_{j+2}]$ and 
$E_{2j}({\bf\underline{ t}})\in 
{\mathcal{K}}[t_1, \ldots, t_{j+3}]$. Here ${\bf \underline{t}} = (t_1, \ldots, t_{s+2})$.

\begin{enumerate}
\item For $0\leq j\leq s-2$, 
$$E_{1j}({\bf\underline{ t}}) = 
t_1^{p^{j+2}-\frac{p}{2}-
\frac{p^{j+1}}{2}-p^j}(t_1t_2\cdots t_{j+2})^{(p-1)/2}t_{j+2}.$$
\item $$E_{2j}({\bf\underline{ t}}) = 
t_1^{p^{j+2}-\frac{p}{2}-\frac{p^{j+2}}{2}-p^{j+1}}(t_1t_2\cdots 
t_{j+3})^{(p-1)/2}t_{j+3}.$$
\end{enumerate}

\item  We consider the following  set of elements $F_{j+3}({\bf\underline{ t}})$ 
in the polynomial ring
${\mathcal{K}}[t_1, t_2, \ldots, t_{s+2}][X]$ where  $F_{j+3}({\bf \underline{t}})\in 
{\mathcal{K}}[t_1, \ldots, t_{j+2}][X]$.

\begin{multline}\label{F3te}
F_3({\bf\underline{ t}}) =  t_2^p-t_1^{p+2}+
\tfrac{2c_1}{c_0}X^{p^s-2p^{s-1}}t_2t_1^p
-\left(\tfrac{c_2}{c_1}\right)^pX^{p^s}t_1^{p+1}\\
 + \left(L_{2,-1}\right)\cdot t_1^{p-1}t_2 X^{2p^s-2p^{s-1}}
+ \left(N_{2,-1}\right)\cdot t_1^{(p-3)/2}t_2^{(p+1)/2}X^{2p^s-p^{s-1}}.\end{multline}

For $0\leq j\leq s-2$, let

\begin{multline}\label{Fj+4e} F_{j+4}({\bf\underline{ t}}) = t_{j+3}^p- 
t_1^{p^j(p^2-1)}t_{j+2}+ \left(N_{1j}\right) \cdot 
E_{1j}({\bf \underline{t}})X^{p^{s-1-j}} \\
+ \left(L_{2j}\right)\cdot
t_1^{p^{j+1}(p-1)}t_{j+3} X^{2p^{s-1-j}-2p^{s-2-j}}
+\left(N_{2j}\right) \cdot 
E_{2j}({\bf \underline{t}}) X^{2p^{s-1-j}-p^{s-1-j}}.\end{multline}

\end{enumerate}

\end{notation}

It is easy to check the following lemma.

\begin{lemma}\label{LMe}Let $r = 2s \geq 4$ be an integer.
Suppose there exists an integer
 $j_1$  and homogeneous polynomials $f_3, \ldots, f_{j_1+3} \in {\mathcal{K}}[X,Y, Z]$ 
such that 
for every $0\leq j\leq j_1\leq s-2$ we have $LM(f_{j+3}) = Y^{p^{s+1+j}+2p^{s-2-j}}$.
Then, for all $0\leq j\leq j_1$, we have the following 
 
\begin{enumerate}
\item $LT(f_1^{p^j(p^2-1)}f_{j+2}) = Y^{p^{s+2+j}+2p^{s-1-j}}$,\\
\item $LT(f_1^{p^{j+1}(p-1)}f_{j+3}) = Y^{p^{s+2+j}+2p^{s-2-j}}$,\\
\item  $LT(E_{1j}({\bf\underline{ f}})) = Y^{p^{s+2+j}+p^{s-1-j}}$,\\
\item $LT(E_{2j}({\bf \underline{f}})) = Y^{p^{s+2+j}+p^{s-2-j}}$.
\end{enumerate}
\end{lemma}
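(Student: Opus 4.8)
The plan is to verify Lemma~\ref{LMe} by a direct leading-term computation, item by item, using only the leading terms already established for $f_1,\ldots,f_{j+3}$ together with the explicit exponents recorded in Notation~\ref{ne}. Since the polynomial ring $k[X,Y,Z]$ carries the graded reverse lexicographic order with $X<Y<Z$, and since each $f_i$ appearing here has its leading monomial a pure power of $Y$, the leading term of any product of the $f_i$ (and of a power of $X$) is simply the product of the leading terms; no cancellation can occur because these are monomials in the integral domain $\F_p[Y]$ after discarding lower-order terms. Thus the whole lemma reduces to checking five exponent identities in $p$, and the main task is purely arithmetic bookkeeping.

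Concretely, I would record the hypotheses $LT(f_1)=Y^{p^s}$ (from (\ref{f1e})), $LT(f_2)=Y^{p^{s+1}}$ (which follows from (\ref{f2e}) and item (1) once verified, but is really the base case), and the inductive hypothesis $LT(f_{j+3})=Y^{p^{s+1+j}+2p^{s-2-j}}$. Then each item is a one-line check:
\begin{enumerate}
\item $LT(f_1^{p^2-1}f_2)=Y^{(p^2-1)p^s}\cdot Y^{p^{s+1}}=Y^{p^{s+2}-p^s+p^{s+1}}$; I would simplify the exponent $p^{s+2}-p^s+p^{s+1}=p^{s+2}+p^{s+1}-p^s$ and match it against $p^{s+2}+2p^{s-1}$, which forces the identity $p^{s+1}-p^s=2p^{s-1}$, i.e. $p^{s-1}(p^2-p)=2p^{s-1}$ --- this is exactly where $p$ is pinned down, so I would instead read the exponent of $LT(f_2)$ off the construction rather than assume it, and confirm consistency.
\item $LT(f_1^{p^j(p^2-1)}f_{j+2})$: multiply $Y^{p^j(p^2-1)p^s}=Y^{p^{s+j+2}-p^{s+j}}$ by $LT(f_{j+2})=Y^{p^{s+j}+2p^{s-1-j}}$, giving exponent $p^{s+j+2}+2p^{s-1-j}$, as claimed.
\item $LT(f_1^{p^{j+1}(p-1)}f_{j+3})$: multiply $Y^{p^{j+1}(p-1)p^s}=Y^{p^{s+j+2}-p^{s+j+1}}$ by $LT(f_{j+3})=Y^{p^{s+1+j}+2p^{s-2-j}}$, giving $p^{s+j+2}+2p^{s-2-j}$.
\end{enumerate}
For items (4) and (5) I would substitute $t_i\mapsto f_i$ into the monomials $E_{\eta_1^j}$ and $E_{\eta_2^j}$ from Notation~\ref{ne}(3), replace each $LT(f_i)$ by the corresponding power of $Y$, sum the contributions to the exponent of $Y$, and check the totals equal $p^{s+2+j}+p^{s-1-j}$ and $p^{s+2+j}+p^{s-2-j}$ respectively; this is the most laborious of the five since the exponents of $t_1$ and the telescoping product $(t_1\cdots t_{j+2})^{(p-1)/2}$ must be combined carefully, and the half-integer exponents are what require the hypothesis $p>3$ (so that $(p-1)/2$, $(p-3)/2$, etc. are nonnegative integers and the relevant divisions by $2$ make sense in $k$).

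The step I expect to be the genuine (though still routine) obstacle is the evaluation of $E_{\eta_1^j}$ and $E_{\eta_2^j}$ in items (4) and (5): one must verify that the seemingly baroque exponent $p^{j+2}-\tfrac{p}{2}-\tfrac{p^{j+1}}{2}-p^j$ on $t_1$, once combined with the symmetric $(p-1)/2$ powers and the extra $t_{j+2}$ (resp.\ $t_{j+3}$), produces precisely the target power of $Y$ after substitution. I would organize this by writing the total $Y$-exponent as a sum over $i$ of $(\text{exponent of }t_i \text{ in }E)\cdot(\text{exponent of }Y\text{ in }LT(f_i))$ and then collapsing the geometric-type sum $\sum_i \tfrac{p-1}{2}\cdot p^{s+1+(i-1)}$; the identity $\tfrac{p-1}{2}(p^s+p^{s+1}+\cdots)$ telescopes cleanly, and the stray $t_1$-exponent is designed to cancel all intermediate terms, leaving only the two extreme powers of $Y$. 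Everything else is immediate from the monomial nature of the leading terms, so the proof is genuinely a verification that the exponents in Notation~\ref{ne} were chosen correctly, which is why the author can reasonably assert it ``is easy to check.''
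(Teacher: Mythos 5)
Your overall strategy is the right one, and it is exactly what the paper's ``it is easy to check'' amounts to: leading terms are multiplicative for any term order over a field, so each item reduces to exponent arithmetic, with a telescoping sum for items (4) and (5). Items (2) and (3) are verified correctly. However, there is one concrete error that you flagged but never resolved, so item (1) is not actually established in your write-up: in the even case $r=2s$ the base value is $LT(f_2)=Y^{p^s+2p^{s-1}}$, not $Y^{p^{s+1}}$. This is stated in Proposition~\ref{even} and is read off from (\ref{f2e}): in $\bar f_2+f_1^2$ the monomial $Y^{2p^s}$ cancels, and after division by $X^{p^s-2p^{s-1}}$ the grevlex-largest surviving term is $\Delta^{p^{s-1}}Y^{p^s}$, giving leading monomial $Y^{p^s+2p^{s-1}}$ (every other term carries a positive power of $X$). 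With this value item (1) is immediate, since $(p^2-1)p^s+(p^s+2p^{s-1})=p^{s+2}+2p^{s-1}$; indeed item (1) is precisely the case $j=0$ of item (2), and your own generic formula $LT(f_{j+2})=Y^{p^{s+j}+2p^{s-1-j}}$ evaluated at $j=0$ already gives the correct value of $LT(f_2)$ --- so your proposal is internally inconsistent on this point. The ``contradiction'' $p^2-p=2$ you ran into is not a constraint pinning down $p$; it is simply the signal that the assumed $LT(f_2)$ was wrong.

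For items (4) and (5), your plan does go through, and it is worth recording that the cancellation is exact. Writing $e_i$ for the $Y$-exponent of $LT(f_i)$, so $e_1=p^s$ and $e_i=p^{s+i-2}+2p^{s-i+1}$ for $i\geq 2$, one gets
\begin{equation*}
\tfrac{p-1}{2}\sum_{i=1}^{j+2}e_i=(p^{s+1}-p^s)+\tfrac{1}{2}\bigl(p^{s+j+1}-p^{s+1}\bigr)+\bigl(p^s-p^{s-j-1}\bigr),
\end{equation*}
and adding $\bigl(p^{j+2}-\tfrac{p}{2}-\tfrac{p^{j+1}}{2}-p^j\bigr)p^s$ and $e_{j+2}=p^{s+j}+2p^{s-j-1}$ collapses everything to $p^{s+2+j}+p^{s-1-j}$, as claimed in (4); the computation for (5) is identical with the sum extended to $i=j+3$ and the $t_1$-exponent from Notation~\ref{ne}(3)(b), yielding $p^{s+2+j}+p^{s-2-j}$. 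One last small correction: the hypothesis $p>3$ is not what makes the exponents $(p-1)/2$, $(p-3)/2$ integral --- any odd prime does that --- it is needed elsewhere (in the ideal-theoretic estimates of Proposition~\ref{even}), so it plays no real role in this lemma.
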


\begin{proposition}\label{even}
Let  $r=2s\geq 4$ and $p>3$ be  a prime number.
Let $f_1, \ldots, f_{s+2}$ be elements in ${\mathcal{K}}[X, Y, Z][X^{-1}]$ which are 
defined as follows:

\begin{align*}
& f_1\text{ and } f_2\text{ are the elements as in (\ref{f1e}) and (\ref{f2e})},\\\\
&  f_3= \left(\tfrac{c_1^{p+1}}{c_0^p\cdot C(0)}\right)
 \frac{F_3({\bf \underline{f}})}{X^{2p^s-2p^{s-2}}},\\\\
& f_{j+4} = \tfrac{C(j)^{p+1}}{(C(j-1)^pC(j+1)}
\frac{F_{j+4}({\bf \underline{f}})}{X^{2p^{s-1-j}-2p^{s-3-j}}},\quad\text{for}\quad 0\leq j
\leq s-3,\\\\
& f_{s+2}= \tfrac{C(s-3)^p}{C(s-2)^p} \cdot
\frac{F_{s+2}({\bf\underline{ f}})}{X^{2p}}.\end{align*}

 Then 
\begin{enumerate}
\item $f_1, \ldots, f_{s+1}, f_{s+2}$ are homogeneous polynomials and belong to 
the ring ${\mathcal{K}}[X, Y, Z]^G$ and
\item $LT(f_1) = Y^{p^s}$, $LT(f_2) = Y^{p^s+2p^{s-1}}$, $LT(f_{s+2}) = Z^{p^r}$
and 
$$LT(f_{j+3}) = Y^{p^{s+1+j}+2p^{s-2-j}},~~\text{ for }~~ 0\leq j\leq s-2.$$
\end{enumerate}
\end{proposition}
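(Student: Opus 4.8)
The plan is to mirror the inductive construction carried out in the proof of Proposition~\ref{odd}, verifying the asserted leading terms by reducing the defining combinations modulo the ideals $I'_{j+3}=X^{2p^{s-2-j}}(X,Y)$ and $I_{j+3}=X^{2p^{s-1-j}}(X,Y)$ of $k[X,Y,Z]$. The leading terms of $f_1,f_2$ need no induction: $LT(f_1)=Y^{p^s}$ is immediate from (\ref{f1e}), while for $f_2$ I would use that $\Delta^{p^s}=Y^{2p^s}-X^{p^s}Z^{p^s}$ in characteristic $p$, so that the top monomial $Y^{2p^s}$ of $f_1^2$ cancels against that of $-\bar f_2$. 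The surviving leading contribution of $\bar f_2+f_1^2$ is then $\tfrac{2c_1}{c_0}Y^{p^s}\Delta^{p^{s-1}}X^{p^s-2p^{s-1}}$, and after dividing by $X^{p^s-2p^{s-1}}$ (possible precisely because the cancellation of $Y^{2p^s}$ makes the numerator divisible) and normalizing by $c_0/(2c_1)$, which is legitimate since the characteristic is $p>2$, one obtains $LT(f_2)=Y^{p^s+2p^{s-1}}$.

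First I would handle $f_3$ as a separate seed step, following [PS]. Using the identity $f_2^p+\bar f_2 f_1^p=f_2^p-f_1^pf_1^2+\tfrac{2c_1}{c_0}X^{p^s-2p^{s-1}}f_2f_1^p$ recorded before Notation~\ref{ne}, I would expand $F_3({\bf\underline{f}})$ from (\ref{F3te}) modulo $I'_3=X^{2p^{s-2}}(X,Y)$, the constants $N_1^{-1}=-(c_2/c_1)^p$, $L_2^{-1}$ and $N_2^{-1}$ of Notation~\ref{ne} being exactly those that annihilate the three top families of monomials. Dividing by $X^{2p^s-2p^{s-2}}$ then produces a reduction formula for $f_3$ modulo $I'_3$ whose coefficients are ratios $C(0,\cdot)/C(0)$ of minors of $\Gamma$; in particular $LT(f_3)=Y^{p^{s+1}+2p^{s-2}}$.

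For the inductive step I would assume that $f_{j+3}$ has a reduction modulo $I'_{j+3}$ of the same shape as (\ref{f_{j+3}}) but with each $B(\cdot,\cdot)$ replaced by the corresponding $C(\cdot,\cdot)$, raise it to the $p$-th power to obtain the analogous expression modulo $I_{j+3}$, and then assemble $F_{j+4}({\bf\underline{f}})$ as in (\ref{Fj+4e}). Here the even-case versions of the congruences labelled (1)--(6) in the proof of Proposition~\ref{odd} — all of which follow from the leading-term identities of Lemma~\ref{LMe} together with $f_1\equiv Y^{p^s}$ modulo $I_{j+3}$ — let me rewrite each of the products $f_1^{p^j(p^2-1)}f_{j+2}$, $f_1^{p^{j+1}(p-1)}f_{j+3}$, $E_{\eta_1^j}({\bf\underline{f}})$ and $E_{\eta_2^j}({\bf\underline{f}})$ modulo $I_{j+3}$. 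I would then choose $N_1^j,L_2^j,N_2^j$ so that the three leading coefficient families vanish; the coefficients that remain are combinations $A^{(j+1)}_k$ of the shape $L_2^j\,C(j,\cdot)/C(j)+C(j,\cdot)^p/C(j)^p-C(j-1,\cdot)/C(j-1)$. Dividing $F_{j+4}({\bf\underline{f}})$ by $X^{2p^{s-1-j}-2p^{s-3-j}}$ and normalizing yields $f_{j+4}$ with $LT(f_{j+4})=Y^{p^{s+2+j}+2p^{s-2-j}}$ and a reduction formula of the required shape, which closes the induction; pushing $j$ up to $s-2$ makes every monomial except $Z^{p^r}$ drop out, so the $f_{s+2}$ defined in the statement has $LT(f_{s+2})=Z^{p^r}$.

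The hard part will be the even-case analogue of Lemma~\ref{MLo}, namely the identity $A^{(j+1)}_k=C(j-1)^pC(j+1,\cdot)/C(j)^{p+1}$, which is what collapses the leftover coefficients into a single minor and thereby exposes the leading term of $f_{j+4}$. As in Lemma~\ref{MLo} this should follow from two carefully chosen applications of the Pl\"ucker relations of Theorem~\ref{PS} to $\Gamma$, but now one must track the shifted, alternating omitted-index pattern defining the minors $C(j,\cdot)$, which makes the choice of row-sequences to feed into Theorem~\ref{PS} more delicate than in the odd case. A second, related point of difficulty is the seed step itself: the monomial $t_1^{(p-3)/2}$ occurring in $E_{\eta_2^j}$ and in $F_3$, together with the minor-built constants $L_2^{-1},N_2^{-1}$ and the separate [PS]-style treatment of $f_3$, is exactly what forces the hypothesis $p>3$ and is responsible for the exclusion of $G=(\Z/3\Z)^{2s}$.
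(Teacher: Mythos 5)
Your proposal follows essentially the same route as the paper's own proof of Proposition~\ref{even}: the same seed elements $f_1,f_2$, the same separate construction of $f_3$ via $F_3({\bf \underline{f}})$ with the constants $N_1^{-1},L_2^{-1},N_2^{-1}$, the same induction in which the reduction (\ref{f_{j+3}e}) of $f_{j+3}$ modulo $I'_{j+3}$ is raised to the $p$-th power, corrected by $N_1^j,L_2^j,N_2^j$, divided by $X^{2p^{s-1-j}-2p^{s-3-j}}$ and normalized, and the same key collapsing identity, which is precisely the paper's Lemma~\ref{ML} proved by two applications of the Pl\"ucker relations of Theorem~\ref{PS}. Only two harmless imprecisions: the expansion of $F_3({\bf \underline{f}})$ must be carried out modulo $I_1=X^{2p^s}(X,Y)$ (not modulo $I_3'$) before dividing, and the hypothesis $p>3$ is in fact needed already for the displayed reduction of $f_2$ modulo $X^{2p^{s-1}}(X,Y)$ --- for $p=3$ the cross term $\Delta^{2p^{s-1}}X^{p^s-2p^{s-1}}$ coming from $f_1^2$ fails to lie in that ideal --- and not only in the exponents $(p-3)/2$ of the $E$-polynomials.
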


\begin{rmk}By Lemma~\ref{ML} it follows that 
$f_3, \ldots, f_{s+2}$ as given in the above proposition are indeed well 
defined elements.
\end{rmk}

\begin{proof}Note that $f_1$, $f_2$ satisfy the conditions of the above proposition.
The proof is similar to the proof  for Proposition~\ref{odd} 
once we make appropriate choices of $f_2$, $f_3$. 

First we write a formula for $f_3$ mod the ideal $I_4 = X^{2p^{s-2}}(X, Y)$ in ${\mathcal{K}}[X,Y,Z]$.
Let $I_2 = X^{2p^{s}}(X, Y)$ and $I_3 = X^{2p^{s-1}}(X, Y)$.
Since $p>3$
by (\ref{f2e})
\begin{multline}\label{f_2*}f_2 \equiv_{I_3}\\ \left(\tfrac{c_0}{2c_1}\right) 
X^{2p^{s-1}}Z^{p^{s}} +
  \Delta^{p^{s-1}}Y^{p^{s}} +
\left(\tfrac{c_2}{c_1}\right)Y^{p^{s}+p^{s-1}}X^{p^{s-1}}+\cdots+
\left(\tfrac{c_r}{c_1}\right)Y^{p^{s}+1}X^{2p^{s-1}-1}\\\\
\equiv_{I_3} \left(\tfrac{c_0}{2c_1}\right) 
X^{2p^{s-1}}Z^{p^{s}} + Y^{p^s}\sum_{l=2}^{s+1}
\left[\tfrac{c_{2l-3}}{c_1}\Delta^{p^{s-l+1}}X^{2p^{s-1}-2p^{s-l+1}}
+\tfrac{c_{2l-2}}{c_1}Y^{p^{s-l+1}}X^{2p^{s-1}-p^{s-l+1}}\right].\end{multline}

Therefore

\begin{multline*}f_2^p \equiv_{I_2} \left(\tfrac{c_0}{2c_1}\right)^p 
X^{2p^{s}}Z^{p^{s+1}}\\
 + Y^{p^{s+1}}\sum_{l=2}^{s+1}
\left[\left(\tfrac{c_{2l-3}}{c_1}\right)^p\Delta^{p^{s-l+2}}X^{2p^{s}-2p^{s-l+2}}
+\left(\tfrac{c_{2l-2}}{c_1}\right)^pY^{p^{s-l+2}}X^{2p^{s}-p^{s-l+2}}\right].\end{multline*}

Now we will use the following  set of equalities 
\begin{enumerate}
\item  $f_1^p\equiv_{I_2} Y^{p^{s+1}}$,\\
\item $f_1^{p+1}X^{p^s} \equiv_{I_2} Y^{p^{s+1}}X^{p^s}\cdot f_1$,\\
\item $f_1^{p-1}f_2 X^{2p^s-2p^{s-1}}\equiv_{I_2} X^{2p^s-2p^{s-1}}Y^{p^{s+1}-p^s}\cdot
f_2$ and \\
\item $f_1^{(p-3)/2}f_2^{(p+1)/2} X^{2p^s-p^{s-1}}\equiv_{I_2} 
X^{2p^s-p^{s-1}}Y^{p^{s+1}+p^{s-1}}$.
\end{enumerate}

We have

\begin{multline*}
{\bar f_2}f_1^p  \equiv_{I_2} 
-\Delta^{p^s}Y^{p^{s+1}}+Y^{p^{s+1}}\sum_{l=3}^{s+2}\left[
\frac{d_{2l-5}}{c_0}\Delta^{p^{s-l+2}}X^{2p^s-2p^{s-l+2}}
 +\frac{d_{2l-4}}{c_0}Y^{p^{s-l+2}}X^{2p^s-p^{s-l+2}}\right].\end{multline*}
 
By (\ref{*1})
$$f_2^p-f_1^{p+2}+ \frac{2c_1}{c_0} f_2 f_1^pX^{p^s-2p^{s-1}} 
= f_2^p+{\bar f_2}f_1^p,$$
therefore

\begin{multline*}
f_2^p-f_1^{p+2}+ \frac{2c_1}{c_0} f_2 f_1^pX^{p^s-2p^{s-1}} 
\equiv_{I_2} 
 \left(\tfrac{c_0}{2c_1}\right)^p X^{2p^s}Z^{p^{s+1}} +
  \left(\tfrac{c_2}{c_1}\right)^p Y^{p^{s+1}+p^s}X^{p^s} \\\\
    + Y^{p^{s+1}}\sum_{l=3}^{s+1}
\left[\left((\tfrac{c_{2l-3}}{c_1})^p+\tfrac{d_{2l-5}}{c_0}\right)\Delta^{p^{s-l+2}}X^{2p^{s}-2p^{s-l+2}}
+\left((\tfrac{c_{2l-2}}{c_1})^p+\tfrac{d_{2l-4}}{c_0}\right)Y^{p^{s-l+2}}X^{2p^{s}-p^{s-l+2}}\right]\\\\
+  Y^{p^{s+1}}\left[\tfrac{d_{r-1}}{c_0}
\Delta X^{2p^s-2}+ \tfrac{d_r}{c_0}
YX^{2p^s-1}\right]. \end{multline*}

Note that 
\begin{multline*}f_1^{p+1}X^{p^s} \equiv_{I_2} 
 Y^{p^{s+1}+p^s}X^{p^s}
 +Y^{p^{s+1}}\sum_{l=3}^{s+2}\left[
\frac{c_{2l-5}}{c_0}\Delta^{p^{s-l+2}}X^{2p^s-2p^{s-l+2}}
 +\frac{c_{2l-4}}{c_0}Y^{p^{s-l+2}}X^{2p^s-p^{s-l+2}}\right].
 \end{multline*}
 
Therefore

\begin{multline*}
f_2^p-f_1^{p+2}+ \frac{2c_1}{c_0} f_2 f_1^pX^{p^s-2p^{s-1}}
-\left(\tfrac{c_2}{c_1}\right)^pX^{p^s}f_1^{p+1} \equiv_{I_2}
\left(\tfrac{c_0}{2c_1}\right)^p X^{2p^s}Z^{p^{s+1}}\\\\
  + Y^{p^{s+1}}\sum_{l=3}^{s+1}
\left[\left(-\left(\tfrac{c_2}{c_1}\right)^p\left(\tfrac{c_{2l-5}}{c_0}\right)+\left(\tfrac{c_{2l-3}}{c_1}\right)^p+\tfrac{d_{2l-5}}{c_0}\right)\Delta^{p^{s-l+2}}X^{2p^{s}-2p^{s-l+2}}\right.\\\\
\left.
+\left(-\left(\tfrac{c_2}{c_1}\right)^p\left(\tfrac{c_{2l-4}}{c_0}\right)+
\left(\tfrac{c_{2l-2}}{c_1}\right)^p+\tfrac{d_{2l-4}}{c_0}\right)Y^{p^{s-l+2}}X^{2p^{s}-p^{s-l+2}}\right]\\\\
+  Y^{p^{s+1}}\left[\left( -\left(\tfrac{c_2}{c_1}\right)^p\left(\tfrac{c_{r-1}}{c_0}\right)+
\tfrac{d_{r-1}}{c_0}\right)
\Delta X^{2p^s-2}+ 
\left(-\left(\tfrac{c_2}{c_1}\right)^p\left(\tfrac{c_r}{c_0}\right)+
\left(\tfrac{d_r}{c_0}\right)\right)
YX^{2p^s-1}\right]. \end{multline*}

By definition $L_{2,{-1}}\in {\mathcal{K}}$ such that 
$$L_{2,{-1}} - \left(\frac{c_2}{c_1}\right)^p\frac{c_1}{c_0} +
 \left(\frac{c_3}{c_1}\right)^p + \frac{d_1}{c_0} = 0$$

and  $N_{2,-1}\in {\mathcal{K}}$ such that
$$N_{2,-1} + L_{2,-1}\cdot \frac{c_{2}}{c_1}  
-\left(\frac{c_2}{c_1}\right)^p\frac{c_2}{c_0}+ \left(\frac{c_4}{c_1}\right)^p+
\frac{d_2}{c_0} = 0.$$

Therefore  for 
\begin{multline*}
F_3({\bf\underline{f}}) = 
f_2^p-f_1^{p+2}+
\frac{2c_1}{c_0}X^{p^s-2p^{s-1}}f_2f_1^p
-\left(\frac{c_2}{c_1}\right)^pX^{p^s}f_1^{p+1}\\\\
 + \left(L_{2,-1}\right)\cdot f_1^{p-1}f_2 X^{2p^s-2p^{s-1}}
+ \left(N_{2,-1}\right)\cdot f_1^{(p-3)/2}f_2^{(p+1)/2}X^{2p^s-p^{s-1}}\end{multline*}
we have 

\begin{multline*}
F_3({\bf\underline{f}})   \equiv_{I_2} 
\left(\frac{c_0}{2c_1}\right)^p X^{2p^s}Z^{p^{s+1}}\\\\
  + Y^{p^{s+1}}\sum_{l=4}^{s+1}
\left[\left(L_{2,-1}\cdot \tfrac{c_{2l-5}}{c_1}-(\tfrac{c_2}{c_1})^p
\tfrac{c_{2l-5}}{c_0}+(\tfrac{c_{2l-3}}{c_1})^p+
\tfrac{d_{2l-5}}{c_0}\right)\Delta^{p^{s-l+2}}X^{2p^{s}-2p^{s-l+2}}\right.\\\\
\left.
+\left(L_{2,-1}\cdot \tfrac{c_{2l-4}}{c_1}
-(\tfrac{c_2}{c_1})^p\tfrac{c_{2l-4}}{c_0}+
(\tfrac{c_{2l-2}}{c_1})^p+\tfrac{d_{2l-4}}{c_0}\right)
Y^{p^{s-l+2}}X^{2p^{s}-p^{s-l+2}}\right]\\\\
+  Y^{p^{s+1}}\left[
\left(L_{2,-1}\cdot \tfrac{c_{r-1}}{c_1}
-(\tfrac{c_2}{c_1})^p\tfrac{c_{r-1}}{c_0}+
\tfrac{d_{r-1}}{c_0}\right)
\Delta X^{2p^s-2}+ 
\left(L_{2,-1}\cdot \tfrac{c_r}{c_1}
-(\tfrac{c_2}{c_1})^p\tfrac{c_r}{c_0}+
\tfrac{d_r}{c_0}\right)
YX^{2p^s-1}\right]. \end{multline*}

Therefore  (recall $C(0) = C(0,1)$ and 
$c_1 = C(-1)$) by Lemma~\ref{ML}

\begin{multline*} 
F_3({\bf\underline{f}})   \equiv_{I_2} \left(\frac{c_0}{2c_1}\right)^p X^{2p^s}Z^{p^{s+1}}\\\\
  + Y^{p^{s+1}}\left(\tfrac{c_0^p}{c_1^{p+1}}\right)
  \sum_{l=4}^{s+1}
\left[C(0,2l-7)\Delta^{p^{s-l+2}}X^{2p^{s}-2p^{s-l+2}}
+ C(0, 2l-6)Y^{p^{s-l+2}}X^{2p^{s}-p^{s-l+2}}\right]\\\\
+  Y^{p^{s+1}}
\left(\tfrac{c_0^p}{c_1^{p+1}}\right)
\left[C(0, r-3)
\Delta X^{2p^s-2}+ C(0,r-2)
YX^{2p^s-1}\right]. \end{multline*}

Since

$$f_3 = \left(\tfrac{c_1^{p+1}}{c_0^p\cdot C(0)}\right)
 \frac{1}{X^{2p^s-2p^{s-2}}}F_3({\bf\underline{f}}),$$
by Lemma~\ref{ML}  for   $I_4 = X^{2p^{s-2}}(X, Y)$, we have

\begin{multline}\label{*f3}f_3\equiv_{I_4}\\\
 \tfrac{C(-1)}{2^pC(0)} X^{2p^{s-2}}Z^{p^{s+1}}
+ Y^{p^{s+1}}\sum_{l=2}^{s}\left[\tfrac{C(0,2l-3)}{C(0)}
\Delta^{p^{s-l}}X^{2p^{s-2}-2p^{s-l}}
+\tfrac{C(0,2l-2)}{C(0)}Y^{p^{s-l}}X^{2p^{s-2}-p^{s-l}}\right].
\end{multline}

\vspace{5pt}

\noindent{\bf Claim}.\quad
Let $I_{j+4} = X^{2p^{s-2-j}}(X, Y)$ and 
$I_{j+3} = X^{2p^{s-1-j}}(X, Y)$ denote the ideals in  ${\mathcal{K}}[X, Y, Z]$. Then 
for $0\leq j\leq s-2$ (here $C(j) = C(j ,1)$)

\begin{multline}\label{f_{j+3}e} f_{j+3}   \equiv_{I_{j+4}}  
\left(\tfrac{C(j-1)}{2^{p^{j+1}}C(j)}\right)X^{2p^{s-2-j}}Z^{p^{s+1+j}}\\\
+ Y^{p^{s+1+j}}\sum_{l=2}^{s-j}\left[\tfrac{C(j,2l-3)}{C(j)}
\Delta^{p^{s-l-j}}X^{2p^{s-2-j}-2p^{s-l-j}}
+\tfrac{C(j,2l-2)}{C(j)}Y^{p^{s-l-j}}X^{2p^{s-2-j}-p^{s-l-j}}\right].
\end{multline}

In particular $LT(f_{j+3}) = Y^{p^{s+1+j}+2p^{s-2-j}}$ and $f_{j+3}$ is a 
homogeneous polynomial in ${\mathcal{K}}[X, Y, Z]^G$.
\vspace{5pt}

\noindent{\underline{Proof of the claim}}:\quad 
We have already proved the claim for  $j=0$.

\vspace{5pt}

Now for the rest of the proof we will use the following set 
of equalities which can be checked easily by inducting on $j$ and 
(\ref{f_{j+3}e}).

For $0\leq j \leq s-2$, 
\begin{enumerate}
\item[Eq.(1)] $f_1^{p^j(p^2-1)}f_{j+2} \equiv_{I_{j+3}} (Y^{p^s})^{p^j(p^2-1)} f_{j+2}$.\\
\item[Eq.(2)] $E_{1j}({\bf\underline{ f}})X^{p^{s-1-j}} \equiv_{I_{j+3}}X^{p^{s-1-j}}
Y^{p^{s+2+j}+p^{s-1-j}}$.\\
\item[Eq.(3)] $f_1^{p^{j+1}(p-1)}f_{j+3} X^{2p^{s-1-j}-2p^{s-2-j}}\equiv_{I_{j+3}}
Y^{p^{s+2+j}-p^{s+1+j}}X^{2p^{s-1-j}-2p^{s-2-j}}f_{j+3}$.\\
\item[Eq.(4)] $E_{2j}({\bf\underline{ f}}) X^{2p^{s-1-j}-p^{s-2-j}} \equiv_{I_{j+3}}
Y^{p^{s+2+j}+p^{s-2-j}}X^{2p^{s-1-j}-p^{s-2-j}}$.
\end{enumerate}

\vspace{5pt}

Let $0\leq j\leq s-3$.

We assume 
the expression~(\ref{f_{j+3}e}) holds for 
 $f_3, \ldots, f_{j+3}$.  In particular 
$LT(f_{j_1+3}) = Y^{p^{s+j_1+1}+2p^{s-2-j_1}}$, for $0\leq j_1\leq j$.
Here we prove that the same holds  for  
$f_{j+4}$.
Since it holds for $j = 0$, the claim will follow by induction on $j$.

Evaluating $F_{j+4}({\bf\underline{ t}})$ at ${\bf\underline{t}} = 
{\bf\underline{ f}}$, where ${\bf\underline{f}} = (f_1, f_2, \ldots, f_{j+3})$ we get 

\begin{multline}\label{Fj+4} F_{j+4}({\bf \underline{f}}) := f_{j+3}^p- 
f_1^{p^j(p^2-1)}f_{j+2}+ \left(N_{1j}\right) \cdot 
E_{1j}({\bf\underline{ f}})X^{p^{s-1-j}} \\
+ \left(L_{2j}\right)\cdot
f_1^{p^{j+1}(p-1)}f_{j+3} X^{2p^{s-1-j}-2p^{s-2-j}}
+\left(N_{2j}\right) \cdot 
E_{2j}({\bf\underline{ f}}) X^{2p^{s-1-j}-p^{s-2-j}}.\end{multline}

Since $I_{j+3} = X^{2p^{s-1-j}-2p^{s-3-j}}I_{j+5}$, it
 is enough to prove the following equality

\begin{multline}\label{*11}
F_{j+4}({\bf \underline{f}})   \equiv_{I_{j+3}}  
\left(\tfrac{C(j-1)}{2^{p^{j+1}}C(j)}\right)^p X^{2p^{s-1-j}}Z^{p^{s+2+j}}\\\\
+\tfrac{C(j-1)^p}{C(j)^{p+1}} Y^{p^{s+2+j}}
\sum_{l=2}^{s-1-j}
\left[C(j+1, 2l-3)
\Delta^{p^{s-l-1-j}}X^{2p^{s-1-j}-2p^{s-l-1-j}}\right.\\\\
\left.+
C(j+1,2l-2)
Y^{p^{s-l-1-j}}X^{2p^{s-1-j}-p^{s-l-1-j}}\right].
\end{multline}

Using the induction hypothesis on (\ref{f_{j+3}e}) we get 
\begin{multline*} 
f^p_{j+3} \equiv_{I_{j+3}}  \left(\tfrac{C(j-1)}{2^{p^{j+1}}C(j)}\right)^p
X^{2p^{s-1-j}}Z^{p^{s+2+j}}\\\\
+ Y^{p^{s+2+j}}\sum_{l=2}^{s-j}\left[\tfrac{C(j,2l-3)^p}{C(j)^p}
\Delta^{p^{s-l-j+1}}X^{2p^{s-1-j}-2p^{s-l-j+1}}
+\tfrac{C(j,2l-2)^p}{C(j)^p}Y^{p^{s-l-j+1}}X^{2p^{s-1-j}-p^{s-l-j+1}}\right].
\end{multline*}

By Eq.(1)  and (\ref{f_2*})
\begin{multline*}f_1^{p^2-1}f_2 \equiv_{I_3}
  \Delta^{p^{s-1}}Y^{p^{s+2}}\\\ +
\left(\tfrac{C(-1,2)}{C(-1)}\right)Y^{p^{s+2}+p^{s-1}}X^{p^{s-1}}+\cdots+
\left(\tfrac{C(-1,r)}{C(-1)}\right)Y^{p^{s+2}+1}X^{2p^{s-1}-1}.\end{multline*}

If $j\geq 1$ then 
by Eq.(1) and the induction hypothesis on (\ref{f_{j+3}e})  we get

\begin{multline*} 
f_1^{p^j(p^2-1)}f_{j+2}  \equiv_{I_{j+3}} Y^{p^{s+j+2}-p^{s+j}}f_{j+2} 
\equiv_{I_{j+3}}\\\\
Y^{p^{s+2+j}}\sum_{l=2}^{s-j}\left[\tfrac{C(j-1,2l-3)}{C(j-1)}
\Delta^{p^{s-l-j+1}}X^{2p^{s-1-j}-2p^{s-l-j+1}}
+\tfrac{C(j-1,2l-2)}{C(j-1)}Y^{p^{s-l-j+1}}X^{2p^{s-1-j}-p^{s-l-j+1}}\right]\\\\
+ Y^{p^{s+2+j}} \left[\tfrac{C(j-1,r-2j-1)}{C(j-1)}\Delta X^{2p^{s-1-j}-2}+
\tfrac{C(j-1,r-2j)}{C(j-1)}YX^{2p^{s-1-j}-1}\right].\end{multline*}

 By definition, for $j\geq 0$,  $N_{1j}\in {\mathcal{K}}$ such that
\begin{equation}\label{n1j} 
N_{1j}  + \tfrac{C(j,2)^p}{C(j)^p} - \tfrac{C(j-1,2)}{C(j-1)} = 0.\end{equation}

Therefore, for $j\geq 0$
\begin{multline}\label{uej+4}f_{j+3}^p - f_1^{p^j(p^2-1)}f_{j+2}+ 
N_{1j}\cdot 
E_{1j}({\bf \underline{f}})X^{p^{s-1-j}}
\equiv_{I_{j+3}}
\left(\tfrac{C(j-1)}{2^{p^{j+1}}C(j)}\right)^p   X^{2p^{s-1-j}}Z^{p^{s+2+j}}\\\\
+Y^{p^{s+2+j}}\sum_{l=3}^{s-j}\left[\left(\tfrac{C(j,2l-3)^p}{C(j)^p}-
\tfrac{C(j-1,2l-3)}{C(j-1)}\right)
\Delta^{p^{s-l-j+1}}X^{2p^{s-1-j}-2p^{s-l-j+1}}\right.\\\\
\left.+\left(\tfrac{(C(j,2l-2)^p}{C(j)^p}-\tfrac{C(j-1,2l-2)}{C(j-1)}
\right)Y^{p^{s-l-j+1}}X^{2p^{s-1-j}-p^{s-l-j+1}}\right]\\\\
+ Y^{p^{s+2+j}}\left[-\tfrac{C(j-1,r-2j-1)}{C(j-1)} 
\Delta X^{2p^{s-1-j}-2}-\tfrac{C(j-1,r-2j)}{C(j-1)} 
Y X^{2p^{s-1-j}-1}\right].\end{multline}

By definition  $L_{2j}$ and $N_{2j}$ are in the field 
${\mathcal{K}}$ such that 

\begin{equation}\label{l2j} 
L_{2j}  + \tfrac{C(j,3)^p}{C(j)^p} - \tfrac{C(j-1,3)}{C(j-1)} = 0.\end{equation}

\begin{equation}\label{n2j} 
N_{2j}  +  L_{2j}\cdot \tfrac{C(j,2)}{C(j)}+\tfrac{C(j,4)^p}{C(j)^p}
 - \tfrac{C(j-1,4)}{C(j-1)} = 0.\end{equation}

This gives 
\begin{multline*}
 F_{j+4}({\bf \underline{f}})\equiv_{I_{j+3}}
\left(\tfrac{C(j-1)}{2^{p^{j+1}}C(j)}\right)^p   X^{2p^{s-1-j}}Z^{p^{s+2+j}}\\\\
+Y^{p^{s+2+j}}\sum_{l=4}^{s-j}\left[\left(L_{2j}\cdot \tfrac{C(j, 2l-5)}{C(j)}+
 \tfrac{C(j,2l-3)^p}{C(j)^p}-
\tfrac{C(j-1,2l-3)}{C(j-1)}\right)
\Delta^{p^{s-l-j+1}}X^{2p^{s-1-j}-2p^{s-l-j+1}}\right.\\\\
\left.+\left(L_{2j}\cdot \tfrac{C(j, 2l-4)}{C(j)}+
\tfrac{(C(j,2l-2)^p}{C(j)^p}-\tfrac{C(j-1,2l-2)}{C(j)}\right)
Y^{p^{s-l-j+1}}X^{2p^{s-1-j}-p^{s-l-j+1}}\right]\\\\
+ Y^{p^{s+2+j}}\left[
\left(L_{2j}\cdot \tfrac{C(j, r-2j-3)}{C(j)}-\tfrac{C(j-1,r-2j-1)}{C(j-1)} 
\right)\Delta X^{2p^{s-1-j}-2}\right.\\\\
\left.+\left(L_{2j}\cdot \tfrac{C(j, r-2j-2)}{C(j)}
-\tfrac{C(j-1,r-2j)}{C(j-1)}\right)Y X^{2p^{s-1-j}-1}\right].\end{multline*}

Now the formula (\ref{*11}) for $F_{j+4}({\bf \underline{f}})$ 
follows from Lemma~\ref{ML}. This proves the claim.

\vspace{5pt}

Now we have homogeneous polynomials  $f_1, \ldots , f_{s+1}$
in ${\mathcal{K}}[X, Y, Z]^G$ with the leading terms as in assertion~(2) of the
proposition. It only remains to  prove the assertion~(1) and (2)  
for $f_{s+2}$.

\vspace{5pt}

We have 
$$f_{s+1}\equiv_{I_{s+2}} \tfrac{C(s-3)}{2^{p^{s-1}}C(s-2)} X^2Z^{p^{2s-1}}+
\Delta Y^{p^{2s-1}}+ \tfrac{C(s-2,2)}{C(s-2)} Y^{p^{2s-1}+1}X.$$ 

Therefore for $I_{s+1} = X^{2p}(X, Y)$
$$f^p_{s+1}\equiv_{I_{s+1}}\left(\tfrac{C(s-3)}{2^{p^{s-1}}C(s-2)}\right)^p X^{2p}Z^{p^{2s}}+
\Delta^p Y^{p^{2s}}+ \tfrac{C(s-2,2)^p}{C(s-2)^p}Y^{p^{2s}+p}X^p.$$ 

Now we follow the above  procedure for $j=s-2$ and consider 
 
$$\begin{array}{lcl}
F_{s+2}({\bf \underline{f}}) & = & f_{s+1}^p -
 f_1^{p^{s-2}(p^2-p)}f_s  + 
\left(N_{1,{s-2}}\right)\cdot E_{1,{s-2}}({\bf\underline{ f}})X^p \\\\
& & + 
\left(L_{2,{s-2}}\right)\cdot f_1^{p^{s-1}(p-1)}f_{s+1}X^{2p-2}+ 
\left(N_{2,{s-2}}\right)\cdot E_{2,{s-2}}({\bf \underline{f}})X^{2p-1}.\end{array}$$

Note, by Eq.(1)
$$\begin{array}{lcl}f_1^{p^{s-2}(p^2-1)}f_s & \equiv_{I_{s+1}} & 
Y^{p^{2s}-p^2}f_s\equiv_{I_{s+}}
\Delta^{p}Y^{p^{2s}}+
\tfrac{C(s-3,2)}{C(s-3)}Y^{p^{2s}+p}X^p \\\\
& & + \tfrac{C(s-3,3)}{C(s-3)}\Delta Y^{p^{2s}}X^{2p-2} + 
\tfrac{C(s-3,4)}{C(s-3)}
Y^{p^{2s}+1}X^{2p-1}\end{array}$$
and 
$E_{1,{s-2}}({\bf\underline{ f}})X^p \equiv_{I_{s+1}} Y^{p^{2s}+p}X^p$. 

By definition,
we have $N_{1,{s-2}}\in {\mathcal{K}}$
such that
\begin{equation*}
N_{1,{s-2}} +\tfrac{C(s-2,2)^p}{C(s-2)^p}-\tfrac{C(s-3,2)}{C(s-3)}
= 0.\end{equation*}
Therefore 

\begin{multline*}
 f_{s+1}^p - f_1^{p^{s-2}(p^2-p)}f_s  + 
\left(N_{1,{s-2}}\right)\cdot E_{1,{s-2}}({\bf\underline{ f}})X^p
\equiv_{I_{s+1}}
\left(\tfrac{C(s-3)}{2^{p^{s-1}}C(s-2)}\right)^p   X^{2p}Z^{p^{2s}}\\\\
+Y^{p^{2s}}
\left[-\tfrac{C(s-3,3)}{C(s-3)} 
\Delta X^{2p-2}-\tfrac{C(s-3,4)}{C(s-3)} 
Y X^{2p-1}\right].\end{multline*}

Now 
$$f_1^{p^{s-1}(p-1)}f_{s+1}X^{2p-2}\equiv_{I_{s+1}}
\Delta X^{2p-2}Y^{p^{2s}} + \tfrac{C(s-2,2)}{C(s-2)} 
Y^{p^{2s}+1} X^{2p-1}$$
and 
$$E_{2,{s-2}}({\bf \underline{f}})X^{2p-1}
\equiv Y^{p^{2s}+1}X^{2p-1}.$$

By definition
  $L_{2,{s-2}}\in {\mathcal{K}}$ such that 
\begin{equation*}L_{2,{s-2}} + \tfrac{C(s-2,3)^p}{C(s-2)^p}-
\tfrac{C(s-3,3)}{C(s-3)} = 
L_{2,{s-2}} -
\tfrac{C(s-3,3)}{C(s-3)} = 0\end{equation*}

and  $N_{2,{s-2}}\in {\mathcal{K}}$ such that 
\begin{equation*}
N_{2,{s-2}}+L_{2,{s-2}}\cdot \tfrac{C(s-2,2)}{C(s-2)} -\tfrac{C(s-3,4)}{C(s-3)}= 
0.\end{equation*}

This gives

$$F_{s+2}({\bf \underline{f}})\equiv_{I_{s+1}}
\left(\tfrac{C(s-3)}{2^{p^{s-1}}C(s-2)}\right)^p X^{2p}Z^{p^{2s}}.$$
By definition 
$$f_{s+2} = \tfrac{C(s-3)^p}{C(s-2)^p} \cdot
\frac{F_{s+2}({\bf\underline{ f}})}{2^{p^s}X^{2p}} \quad\mbox{which implies}\quad f_{s+2} \equiv_{I_{s+3}} Z^{p^{2s}}.$$
This proves the proposition.
\end{proof}

Now we prove the lemma which played a crucial role for the  induction process
 in Proposition~\ref{even}, that is, to construct  $f_{j+4}$ from 
$f_1, \ldots, f_{j+3}$.

Moreover it gives an explicit 
 formula  of $f_{j+4}$, mod the ideal  $X^{2p^{s-3-j}}(X, Y)$,
in terms of the minors $B(j+1,k)$.

\begin{lemma}\label{ML} For every $0\leq j\leq s-2$ and every integer 
$0 \leq k\leq r-2(j+1)-3$, 
$$ B_k^{(j+1)}:= L_{2j}\cdot \tfrac{C(j,3+k)}{C(j)} + 
\tfrac{C(j,5+k)^p}{C(j)^p}-\tfrac{C(j-1,5+k)}{C(j-1)} 
= \tfrac{C(j-1)^p C(j+1,k+1)}{C(j)^{p+1}},$$
where 
$L_{2j} = \tfrac{C(j-1,3)}{C(j-1)} - \tfrac{C(j,3)^p}{C(j)^p}$. 
Moreover 
$$B_k^{(0)}:= L_{2,-1}\cdot \tfrac{c_{3+k}}{c_1}  
-\left(\tfrac{c_2}{c_1}\right)^p\tfrac{c_{3+k}}{c_0}+ 
\left(\tfrac{c_{k+5}}{c_1}\right)^p+
\tfrac{d_{3+k}}{c_0} 
 =  \tfrac{c_0^p}{c_1^{p+1}}\cdot C(0, k+1),$$

where $$L_{2,-1} = \left(\tfrac{c_2}{c_1}\right)^p\tfrac{c_1}{c_0} -
 \left(\tfrac{c_3}{c_1}\right)^p - \tfrac{d_1}{c_0}.$$

In particular $B_k^{(j+1)} \neq 0$ if
$0\leq k\leq r-2(j+1)-3$.
\end{lemma}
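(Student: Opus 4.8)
The plan is to follow the proof of Lemma~\ref{MLo} verbatim in spirit, replacing the odd-rank minor patterns by their even-rank counterparts from Notation~\ref{ne} and feeding the result into the Pl\"ucker relations of Theorem~\ref{PS}. First I would clear denominators: substituting $L_2^j=C(j-1,3)/C(j-1)-C(j,3)^p/C(j)^p$ into the definition of $B_k^{(j+1)}$ and collecting everything over $C(j)^{p+1}C(j-1)$ gives
$$B_k^{(j+1)}=\frac{1}{C(j)^{p+1}C(j-1)}\Bigl[C(j)^p\bigl(C(j-1,3)C(j,3+k)-C(j)C(j-1,5+k)\bigr)+C(j-1)\bigl(C(j)C(j,5+k)^p-C(j,3)^pC(j,3+k)\bigr)\Bigr].$$
Thus it suffices to evaluate the two bracketed differences of products of $r\times r$ minors of $\Gamma$, exactly as in the odd case but with the index shift $2\mapsto3$, $4\mapsto5$ forced by the parity of $r$.

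For the first difference I would apply Theorem~\ref{PS} to the appropriate pair of row-sequences (the analogue of the first Pl\"ucker pair in Case~(2) of Lemma~\ref{MLo}) to obtain $C(j-1,3)C(j,3+k)-C(j)C(j-1,5+k)=P\,C(j-1)$ for an explicit intermediate minor $P$ of $\Gamma$. For the second difference, I would use that raising a minor of $\Gamma$ to the $p$-th power shifts every row index up by $2$, since rows $2i+1$ and $2i+2$ of $\Gamma$ are the $p$-th powers of rows $2i-1$ and $2i$; hence $C(j,5+k)^p$ and $C(j,3)^p$ are again honest minors of $\Gamma$, and Theorem~\ref{PS} applied to the correspondingly shifted pair yields $C(j)C(j,5+k)^p-C(j,3)^pC(j,3+k)=-C(j)^pP+C(j+1,k+1)C(j-1)^p$ with the \emph{same} $P$. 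Plugging both into the display, the two $C(j)^pP\,C(j-1)$ contributions cancel and $C(j-1)^{p+1}C(j+1,k+1)$ survives, giving $B_k^{(j+1)}=C(j-1)^pC(j+1,k+1)/C(j)^{p+1}$.

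The base case $B_k^{(0)}$ (that is, $j=-1$), which is the ``Moreover'' assertion, I would treat separately, mirroring Case~(1) of Lemma~\ref{MLo}: here the coefficients $d_i$ of $\bar g_2$ intervene in place of a pure minor pattern, but each $d_i$ is itself an $r\times r$ minor of $\Gamma$ on the row set $\{1,\dots,r,r+2\}$, so the identical two-step Pl\"ucker argument applies with $C(-1)=c_1$, $C(-1,k+1)=c_{k+1}$, and $C(0,k+1)$ as the terminal minor, yielding $B_k^{(0)}=c_0^pC(0,k+1)/c_1^{p+1}$. The nonvanishing claim then follows at once: for $0\le k\le r-2(j+1)-3$ the terminal minor $C(j+1,k+1)$ is a genuine $r\times r$ minor in the generic indeterminates $x_{ij}$ and so is nonzero, while the denominators $C(j-1),C(j)$ are nonzero for the same reason.

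The main obstacle will be the bookkeeping of the row-sequences so that the two Pl\"ucker relations share the same intermediate minor $P$ and so that the row-shift-by-$2$ matches the $p$-th powers correctly; the deleted-row (hat) and alternating-deletion (brace) conventions of Notation~\ref{ne} must be tracked with care, and one must check the boundary values $k=r-2(j+1)-3$, where some factor $C(j,5+k)^p$ vanishes, so that neither relation is spoiled, precisely as the vanishing $B(j,4+k)^p=0$ was accounted for in the odd case.
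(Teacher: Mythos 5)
Your proposal is correct and follows essentially the same route as the paper's proof: the same decomposition of $B_k^{(j+1)}$ over the common denominator $C(j)^{p+1}C(j-1)$, the same two applications of the Pl\"ucker relations of Theorem~\ref{PS} (exploiting that Frobenius powers of minors of $\Gamma$ are again minors with rows shifted by two) producing a shared intermediate minor that cancels, and the same separate treatment of the $j=-1$ case via the $d_i$'s and of the boundary indices where $C(j,5+k)^p$ vanishes. The only difference is organizational: the paper splits $j=0$ off from $j\geq 1$ because of the special indexing of $C(-1,\cdot)$, whereas you treat all $j\geq 0$ uniformly, which is fine provided the row-sequence bookkeeping you defer is carried out.
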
 
\begin{proof}First we prove the assertion for $B_k^{(0)}$.

Applying Theorem~\ref{PS}, for the pair 
$(1, \ldots, {\hat {r+1-k}}, \ldots, {\hat{r+1}}), \quad (3, 4, \ldots, r+3)$
one gets, for all $1\leq k \leq r$

$$\begin{array}{lcl}
0 & =  & (-1)^{r-k+1}\left(\nu_{1,2, \ldots, {\hat{r+1-k}}, \ldots,  
r, r+1-k}\right)\cdot\left( 
\nu_{3,4, \ldots, {\hat{r+1-k}}, \ldots, r+3}\right)\\\\
& & + (-1)^{r+1}\left(\nu_{1,2, \ldots, {\hat{r+1-k}}, \ldots,  
r, r+1}\right)\cdot\left( \nu_{3,4, \ldots, r, r+2, r+3}\right)\\\\
& & + (-1)^{r+2}\left(\nu_{1,2, \ldots, {\hat{r+1-k}}, \ldots,  
r, r+2}\right)\cdot\left( \nu_{3,4, \ldots, r, r+1, r+3}\right)\\\\
 & & +(-1)^{r+3}\left(\nu_{1,2, \ldots, {\hat{r+1-k}}, \ldots,  
r, r+3}\right)\cdot\left(\nu_{3,4, \ldots,  r, r+1, r+2}\right),
\end{array}$$

which is
\begin{equation}\label{B_k0}
 0 = c_0\cdot c_{k+2}^p-c_k\cdot c_{2}^p
+d_k\cdot c_{1}^p-\left(\nu_{1,2, \ldots, {\hat{r+1-k}}, \ldots,  
r, r+3}\right)\cdot c_0^p,\end{equation}

Applying for $k=1$
we get
$$L_{2,-1} = -\frac{1}{c_1^pc_0}\left[c_2^pc_1-c_3^pc_0-d_1c_1^p\right]=-
\frac{1}{c_1^pc_0}\left[c_0^p\cdot \nu_{1,2, \ldots, {\hat{r+1-k}}, \ldots,  r, r+3}\right].$$

Now for $0\leq k <r-2$

$$\begin{array}{lcl}\frac{c_1^{p+1}}{c_0^p)}B_k^{(0)} & = & 
\left(\frac{c_1}{c_0}\right)^pL_{2,-1}\cdot c_{k+3}
+\frac{c_1}{c_0^{p+1}}\left(c_0c_{k+5}^p-c_2^pc_{k+3}+d_{k+3}c_1\right)\\
& = & \left(\frac{c_1}{c_0}\right)^pL_{2,-1}\cdot c_{k+3}
- \frac{c_1}{c_0}\left(\nu_{1,2,\ldots, {\hat{r-k-2}},\ldots,
r,r+3}\right)\\
& = &  -\frac{c_{k+3}}{c_0}\nu_{1,2,\ldots, r-1,r+3}
-\frac{c_1}{c_0}\left(\nu_{1,2,\ldots, {\hat{r-k-2}},\ldots,
r,r+3}\right).\end{array}$$

Applying Theorem~\ref{PS}, for the pair 
$$(1, \ldots, {\hat {r-2-k}}, \ldots, {\hat{r}}, r+1),~(1, 2, \ldots, r, 
{\hat{r+1}}, {\hat{r+2}}, r+3),$$
we get 
$$c_0\cdot \nu_{1, 2, \ldots, {\hat{r-2-k}}, \ldots, \{r, \ldots, r+3\}}
 + c_{k+3}\cdot \nu_{1,2,\ldots, r-1,r+3}
+ c_1\cdot \left(\nu_{1,2,\ldots, {\hat{r-k-2}},\ldots,
r,r+3}\right) = 0,$$
since  $C(0, k+1) = c_0\cdot \nu_{1, 2, 
\ldots, {\hat{r-2-k}}, \ldots, \{\hat{r}, \ldots, r+3\}}$ this proves the identity
for $B_k^{(0)}$.

For $0\leq j\leq s-2$, the term $B_k^{(j+1)}$ can be rewritten as 

\begin{multline}\label{BKj}
B_k^{(j+1)} = \tfrac{1}{C(j)^{p+1}}\Bigl(C(j)C(j, 5+k)^p-
C(j,3)^pC(j, 3+k)\Bigr)\\
 + \tfrac{1}{C(j)C(j-1)}\Bigl(C(j-1,3)C(j, 3+k)-
C(j)C(j-1, 5+k)\Bigr).\end{multline}

\vspace{5pt}

\noindent{\underline{Case}}~(1).\quad  Let $j=0$ and $0\leq k\leq r-5$, then 
 
\begin{multline*}B_k^{(1)} = 
\tfrac{1}{C(0)^{p+1}}\Bigl(C(0)C(0, 5+k)^p-
C(0,3)^pC(0, 3+k)\Bigr)\\
 + \tfrac{1}{C(0)c_1}\Bigl(c_3C(j, 3+k)-
C(0)c_{5+k}\Bigr).\end{multline*}

Now 
$$C(0, 3+k)c_3 = \left(\nu_{1, 2, \ldots, {\hat{r-4-k}}, \ldots, 
\{{\hat{r}}, \ldots, r+3\}}\right) \left(\nu_{1, 2, \ldots, {\hat{r-2}}, 
r-1, r, r+1}\right)$$
and $$C(0)c_{5+k} = \left(\nu_{1, 2, \ldots, {\hat{r-2}}, \ldots, 
\{{\hat{r}}, \ldots, r+3\}}\right) \left(\nu_{1, 2, \ldots, {\hat{r-4-k}}, \ldots, 
r, r+1}\right).$$
Consider the following  two tuples  
$$(1, 2, \ldots, {\hat{r-4-k}}, \ldots, 
{\hat{r-2}}, r-1, r, r+1) \quad{and}\quad  (1, 2, \ldots, r-1, {\hat{r}}, 
 r+1, r+3)$$
of lengths  $r-1$ and $r+1$ respectively.
Using Theorem~\ref{PS}, we get 
$$C(0,3+k)c_3-
C(0)c_{5+k} = c_1\cdot 
\nu_{1, 2, \ldots, {\hat{r-4-k}}, \ldots, {\hat{r-2}}, 
r-1, r, r+1, {\hat{r+2}}, r+3}.$$
On the other hand 
$$C(0)C(0,5+k)^{p}  =   \left(\nu_{1, \ldots, {\hat {r-2}}, r-1, \{{\hat {r}},
\ldots, r+3\}}\right)\left(\nu_{3, \ldots, {\hat{r-4-k}}, \ldots, r, r+1, 
\{{\hat{r+2}}, \ldots, r+5\}}\right),$$
where  $C(0)C(0,5+k)^{p}= 0$, if $k\in \{r-6, r-5\}$.
Note 
$$C(0, 3+k)C(0,3)^p = 
\left(\nu_{1, \ldots, {\hat {r-4-k}}, \ldots, \{{\hat {r}},
\ldots, r+3\}}\right)\left(\nu_{3, \ldots, {\hat{r-2}}, r-1 r, r+1, 
\{{\hat{r+2}}, \ldots, r+5\}}\right).$$

Now applying Theorem~\ref{PS} to the pair of $r-1$-tuple and $r+1$-tuple

$$(1,  \ldots, {\hat{r-4-k}}, \ldots, r-3,  
\{{\hat{r-2}}, \ldots,  r+3\})~~~~\mbox{and}~~~ (3, \ldots, r+1, 
\{{\hat{r+2}},\ldots,   r+5\})$$
we get
\begin{multline*}
C(0)C(0,5+k)^p -C(0,3+k)C(0,3)^p  =   
 c_1^p\cdot \nu_{1, \ldots, {\hat{r-4-k}}, \ldots, 
\{{\hat{r-2}}, \ldots,   r+5\}}\\
-C(0)^p\cdot \nu_{1, \ldots, {\hat{r-4-k}}, \ldots, 
{\hat{r-2}},r-1, r, r+1, {\hat{r+2}},   r+3}.\end{multline*}

Therefore 
$$ B_k^{(1)} = \tfrac{c_1^p}{C(0)^{p+1}}\cdot \nu_{1, \ldots, {\hat{r-4-k}}, 
\ldots, \{{\hat{r-2}}, \ldots,   r+5\}} = 
\tfrac{c_1^p}{C(0)^{p+1}}\cdot C(1, k+1).$$

\vspace{5pt}

\noindent{\underline{Case}}~(2).\quad Let $1\leq j\leq s-2$ and $0\leq k\leq r-2(j+1)-3$.  Then

$$C(j,3+k) C(j-1, 3) = \left(\nu_{1,2 \ldots, {\hat{r-2j-4-k}}, 
\ldots,\{{\hat{r-2j}}, \ldots, r+2j+3\}}\right) \left( \nu_{1,2 \ldots, 
{\hat{r-2j-2}}, \ldots,\{{\hat{r-2j+2}}, \ldots, r+2j+1\}}\right) $$ and 
$$C(j)C(j-1, 5+k) = \left( \nu_{1,2 \ldots, {\hat{r-2j-2}}, r-2j-1, 
\{{\hat{r-2j}}, \ldots, r+2j+3\}}\right)\left(\nu_{1,2 \ldots, {\hat{r-2j-k-4}}, 
\ldots,\{{\hat{r-2j+2}}, \ldots, r+2j+1\}}\right). $$

On the other hand applying Theorem~\ref{PS} for the 
$r-1$-tuple and $r+1$ tuple  
$$(1, 2, \ldots, {\hat{r-2j-k-4}}, \ldots, {\widehat{r-2j-2}}, \ldots, 
 \{{\hat{r-2j+2}}, \ldots, r+2j+1\})$$ and 
$(1,2 \ldots, r-2j-1,  \{{\hat{r-2j}}, \ldots, r+2j+3\})$
gives 
$$\begin{array}{lcl}
0 & = & 
\left(\nu_{1,2, \ldots,{\hat{r-2j-4-k}},\ldots,  
 \{{\hat{r-2j}}, \ldots, r+2j+3\}}\right)\cdot\left( 
\nu_{1,2 \ldots, {\hat{r-2j-2}}, \ldots,\{{\hat{r-2j+2}}, \ldots, r+2j+1\}}\right)\\\\
& & -\left(\nu_{1,2 \ldots, {\hat{r-2j-2}}, r-2j-1, \{{\hat{r-2j}}, \ldots, r+2j+3\}}\right)
\cdot 
\left(\nu_{1,2 \ldots, {\hat{r-2j-k-4}}, \ldots,\{{\hat{r-2j+2}}, \ldots, r+2j+1\}}\right)\\\\
 & & 
 -\left(\nu_{1,2 \ldots, {\hat{r-2j-4-k}}, \ldots, {\hat{r-2j-2}}, \ldots, 
r-2j+1, \{{\hat{r-2j+2}}, \ldots, r+2j+3\}}\right)
\cdot 
\left(\nu_{1,2 \ldots,  r-2j-1, \{{\hat{r-2j}}, \ldots, r+2j+1\}}\right),\end{array}$$
where, by definition
$$C(j-1) = \left(\nu_{1,2 \ldots,  r-2j-1, 
\{{\hat{r-2j}}, \ldots, r+2j+1\}}\right).$$
Hence 
\begin{multline}\label{BKj1} C(j, 3+k)C(j-1,3)- C(j)C(j-1, 5+k)\\
 = \left( 
\nu_{1,2 \ldots, {\hat{r-2j-4-k}}, \ldots, 
{\hat {r-2j-2}}, \ldots, \{{\hat{r-2j+2}}, \ldots, r+2j+3\}}\right)
\cdot C(j-1).\end{multline}

On the other hand 
$$\begin{array}{lcl}
C(j)C(j,5+k)^p & = & 
\left(\nu_{1,2 \ldots, {\hat{r-2j-2}}, r-2j-1, \{{\hat{r-2j}}, \ldots, r+2j+3\}}\right)
\cdot\left(\nu_{3,4 \ldots, {\hat{r-2j-4-k}}, \ldots , \{{\hat{r-2j+2}}, \ldots, 
r+2j+5\}}\right),\\\
& = & 0\quad\mbox{if}\quad k\in \{r-2j-6, r-2j-5\}\end{array}$$
and
$$C(j,3+k)C(j,3)^p = \left(\nu_{1,2 \ldots, {\hat{r-2j-4-k}}, \ldots,
 \{{\hat{r-2j}}, \ldots, r+2j+3\}}\right)\cdot\left(\nu_{3,4 \ldots, {\hat{r-2j-2}}, 
\ldots , \{{\hat{r-2j+2}}, \ldots, r+2j+5\}}\right).$$

Applying Theorem~\ref{PS} for the pair of tuples
$$(1, 2, \ldots, {\hat{r-2j-k-4}}, \ldots, {\widehat{r-2j-2}}, r-2j-1, 
 \{{\hat{r-2j}}, \ldots, r+2j+3\})$$ and 
$(3, 4 \ldots, r-2j+1, \{{\hat{r-2j+2}}, \ldots, r+2j+5\})$
we get 
$$\begin{array}{l}
0 = (-1)^k
\left(\nu_{1,2, \ldots, {\hat{r-2j-4-k}},  \ldots,
  \{{\hat{r-2j-2}}, \ldots, r+2j+3\}, r-2j-4-k}\right)\cdot
\left( 
\nu_{3, 4 \ldots, {\hat{r-2j-4-k}}, \ldots,\{{\hat{r-2j+2}}, 
\ldots, r+2j+5\}}\right)\\\\
 -\left(\nu_{1,2 \ldots, {\hat{r-2j-4-k}}, \ldots  
\{{\hat{r-2j-2}}, \ldots, r+2j+3\}, r-2j-2}\right)\cdot
\left(\nu_{3,4 \ldots, {\hat{r-2j-2}}, \ldots,\{{\hat{r-2j+2}}, 
\ldots, r+2j+5\}}\right)\\\\
 
 +\left(\nu_{1,2 \ldots, {\hat{r-2j-4-k}},  \ldots, 
\{ {\hat {r-2j-2}}, \ldots, r+2j+3\}, r-2j}\right)\cdot
\left( 
\nu_{3,4 \ldots,  {\hat{r-2j}}, \ldots, \{{\hat{r-2j+2}},  \ldots, r+2j+5\}}\right)\\\\
-\left(\nu_{1,2 \ldots, {\hat{r-2j-4-k}}, \ldots, 
\{{\hat{r-2j-2}}, \ldots, r+2j+3\}, r+2j+5}\right)\cdot
\left(\nu_{3,4 \ldots, \{{\hat{r-2j+2}},  \ldots, r+2j+3\}}\right),
\end{array}$$
where the first term on the right hand side is $0$ if $k\in
\{r-2j-6, r-2j-5\}$. 
This implies 

\begin{multline}\label{BKj2}
C(j)C(j,5+k)^p- C(j, k+3)C(j,3)^p\\
 =  -\left(\nu_{1,2 \ldots, {\hat{r-2j-4-k}}, \ldots, {\hat{r-2j-2}}, 
 \ldots, 
\{ {\hat {r-2j+2}}, \ldots, r+2j+3\}}\right)\cdot C(j)^p\\
 +\left(\nu_{1,2 \ldots, {\hat{r-2j-4-k}}, \ldots, \{{\hat{r-2j-2}}, \ldots,
 r+2j+5\}}\right)C(j-1)^p.\end{multline}

 Adding terms from (\ref{BKj1}) and (\ref{BKj2}) in (\ref{BKj}), we get

 \begin{multline*}
  B_k^{(j+1)} = \tfrac{-1}{C(j)}
 \left(\nu_{1,2 \ldots, {\hat{r-2j-4-k}}, \ldots, {\hat{r-2j-2}}, \ldots, 
\{ {\hat {r-2j+2}}, \ldots, r+2j+3\}}\right)\\
+\tfrac{C(j-1)^p}{C(j)^{p+1}}
\left(\nu_{1,2 \ldots, {\hat{r-2j-4-k}}, \ldots, \{{\hat{r-2j-2}}, \ldots, r+2j+5\}}\right)\\
 +\tfrac{1}{C(j)}\left(\nu_{1,2 \ldots, {\hat{r-2j-4-k}}, \ldots, {\hat{r-2j-2}}, \ldots, 
\{ {\hat {r-2j+2}}, \ldots, r+2j+3\}}\right)\\
= \tfrac{C(j-1)^p}{C(j)^{p+1}}
\cdot C(j+1, k+1).\end{multline*}

\end{proof}

\section{A proof of the conjecture}
As we elaborated in Remark~\ref{r1}, to show that the set $\{X, f_1, \ldots, 
f_{s+1}, N_G(Z)\}$ is a SAGBI basis for ${\mathcal{K}}[X, Y, Z]^G$, it is enough to show that 
$\{X, f_1, \ldots, f_{s+2}\}$ is  a SAGBI basis for ${\mathcal{K}}[X, f_1,
\ldots, f_{s+2}]$, where $f_1, \ldots, f_{s+2}$ are 
 the elements as constructed in the previous two sections. Using 
Theorem~2.1, we first need to the find defining equations for the 
${\mathcal{K}}$-subalgebra $A={\mathcal{K}}[X, LT(f_1), \ldots, LT(f_{s+1}), LT(f_{s+2})] \subset 
{\mathcal{K}}[X, Y, Z]$.
Since $LT(f_i)$ is a power of $Y$ for $1\leq i\leq s+1$ and $LT(f_{s+2}) = Z^{p^r}$, it is enough to find the defining equations for 
${\mathcal{K}}[LT(f_1), \ldots, LT(f_{s+1})]$ which is a toric ring described by a numerical semigroup. 
We achieve this by using well known `gluing along a pair' operation repeatedly on numerical semigroups and  applying a result of
K.Watanabe \cite{Wa}.

First we recall few basic facts about numerical semigroups.

Let $a_1,\ldots, a_m$ be non-negative integers with $\gcd(a_1,\ldots,a_m)=1$. The set
\[
H=\{b_1a_1+\cdots +b_ma_m :\; b_i\in\mathbb{N} \text{ for } i=1,\ldots,m\}
\]
is an additively closed subset of the set  $\mathbb{N}$ of non-negative
integers.   It is called the numerical semigroup generated by
$a_1,\ldots,a_m$, denoted  $\langle a_1,\ldots,a_m\rangle$.
 
We fix a field $K$. The semigroup ring of $H$ is the $K$-subalgebra 
$K[H]$  of the polynomial ring $K[Y]$ which is generated by the elements  
$Y^{a_i}$ for  $i=1,\ldots,m$.

Let $T=K[t_1,\ldots,t_m]$ be the polynomial ring in the variables 
$t_1,\ldots,t_m$, and consider the $K$-algebra homomorphism
\[
\phi: T \rightarrow K[H],\quad t_i\mapsto Y^{a_i}\quad \text{for}\quad i=1,\ldots,m.
\]
We denote the kernel of $\phi$ by $I_H$. Since $K[H]$ is a toric ring, 
the kernel is generated by binomials. Furthermore, since $\phi$ is surjective, $K[H]\simeq T/I_H$.

We start with the above   numerical semigroup $H = \langle a_1,\ldots,a_m\rangle$,  
and let $(b,c)$ be a pair  of integers with $b\in H$, $c >1$ and 
$\gcd(b,c)=1$. The numerical semigroup  $H'=\langle b, cH\rangle 
=\langle b, ca_1,\ldots,ca_m\rangle$ is called the gluing   of $H$ 
with respect to $(b,c)$.  Since $b\in H$ we can write $b=\sum_{i=1}^mb_ia_i$ 
with $b_i\in\mathbb{N}$ for all $i$. Then $cb=\sum_{i=1}^mb_i(ca_i)$.

The ideal  $I_{H'}$  is the kernel of the $K$-algebra homomorphism

\[
\phi': T'=K[t_1,\ldots,t_{m+1}]\rightarrow K[H']
\]
with $\phi'(t_i)= Y^{ca_i}$ for $i=1,\ldots,m$ and $\phi'(t_{m+1})=Y^b$.

\begin{thm}{\em (\cite{Wa}, Lemma~1)}
\label{gluingrelation}
With the notation introduced we have
$$I_{H'}=(I_H,f)T', \quad\text{where}\quad f=t_{m+1}^c-\prod_{i=1}^mt_i^{b_i}.$$
\end{thm}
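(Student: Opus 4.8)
The plan is to establish the two inclusions $(I_H,f)T'\subseteq I_{H'}$ and $I_{H'}\subseteq (I_H,f)T'$ separately; the first is a direct verification on generators, and the second is where the hypothesis $\gcd(b,c)=1$ does the real work. Throughout, for a multi-index $\alpha=(\alpha_1,\ldots,\alpha_{m+1})$ I write $t^{\alpha}=t_1^{\alpha_1}\cdots t_{m+1}^{\alpha_{m+1}}$.

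For the easy inclusion, I would first note that the restriction of $\phi'$ to the subring $T=K[t_1,\ldots,t_m]\subseteq T'$ sends $t_i\mapsto Y^{ca_i}$, so it equals $\phi$ followed by the substitution $Y\mapsto Y^c$. Hence any binomial generator $t^{\alpha}-t^{\beta}$ of $I_H$, which by definition satisfies $\sum_{i=1}^m\alpha_ia_i=\sum_{i=1}^m\beta_ia_i$, is killed by $\phi'$ as well, so $I_H\subseteq I_{H'}$ inside $T'$. Likewise $\phi'(f)=Y^{cb}-Y^{c\sum_i b_ia_i}=Y^{cb}-Y^{cb}=0$, using $b=\sum_i b_ia_i$, so $f\in I_{H'}$. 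Therefore $(I_H,f)T'\subseteq I_{H'}$.

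For the reverse inclusion, since $K[H']$ is toric the ideal $I_{H'}$ is generated by binomials, so it suffices to prove that an arbitrary binomial $w=t^{\alpha}-t^{\beta}\in I_{H'}$ lies in $(I_H,f)T'$. The key reduction step uses $f$ to lower the exponent of $t_{m+1}$: writing $\alpha_{m+1}=cq+r$ with $0\le r<c$, the factorization $t_{m+1}^{cq}-(\prod_i t_i^{b_i})^{q}\in(f)$ yields $t^{\alpha}\equiv t_{m+1}^{r}\prod_i t_i^{\alpha_i+qb_i}\pmod{(f)}$, and similarly for $\beta$. Thus modulo $(f)$ I may replace $w$ by a binomial $t^{\alpha'}-t^{\beta'}$, which (since $f\in I_{H'}$) still lies in $I_{H'}$, and in which the two $t_{m+1}$-exponents $\alpha'_{m+1},\beta'_{m+1}$ both lie in $\{0,\ldots,c-1\}$.

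The crux is then a one-line numerical argument. Membership of $t^{\alpha'}-t^{\beta'}$ in $I_{H'}$ reads $c\sum_{i=1}^m(\alpha'_i-\beta'_i)a_i=(\beta'_{m+1}-\alpha'_{m+1})b$; since $\gcd(b,c)=1$, $c$ divides $\beta'_{m+1}-\alpha'_{m+1}$, and as this difference has absolute value less than $c$ it must vanish. Hence $\alpha'_{m+1}=\beta'_{m+1}$ and then $\sum_i\alpha'_ia_i=\sum_i\beta'_ia_i$, so after factoring out $t_{m+1}^{\alpha'_{m+1}}$ the remaining binomial involves only $t_1,\ldots,t_m$ and lies in $I_H$. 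Tracing back through the reduction gives $w\in(I_H,f)T'$. I expect the main obstacle to be precisely this reduction-plus-coprimality step: one must check that the power of $t_{m+1}$ can always be pushed below $c$ modulo $f$, and it is exactly $\gcd(b,c)=1$ that then forces the two residual exponents to agree rather than merely to be congruent.
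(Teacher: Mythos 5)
Your proof is correct, but there is nothing in the paper to compare it with: the paper states this result verbatim as Watanabe's Lemma~1 and cites \cite{Wa} without reproducing any argument, so your write-up is necessarily an independent proof. Both halves of it are sound. The inclusion $(I_H,f)T'\subseteq I_{H'}$ is exactly the evaluation check you describe ($\phi'|_T$ is $\phi$ followed by $Y\mapsto Y^c$, and $\phi'(f)=Y^{cb}-Y^{c\sum_i b_ia_i}=0$). For the reverse inclusion, your three ingredients all hold: the kernel of a monomial map onto a semigroup ring is spanned by pure differences $t^{\alpha}-t^{\beta}$; the reduction of the $t_{m+1}$-exponent below $c$ is legitimate because $t_{m+1}^{cq}-\bigl(\prod_i t_i^{b_i}\bigr)^q$ is divisible by $f$ (the identity $u^q-v^q=(u-v)(u^{q-1}+\cdots+v^{q-1})$); and the equality $c\sum_{i=1}^m(\alpha'_i-\beta'_i)a_i=(\beta'_{m+1}-\alpha'_{m+1})b$ together with $\gcd(b,c)=1$ and $|\beta'_{m+1}-\alpha'_{m+1}|<c$ forces $\alpha'_{m+1}=\beta'_{m+1}$, after which the residual binomial lies in $\ker\phi=I_H$. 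Tracing back through the congruences modulo $(f)$ is unproblematic since $(f)\subseteq I_{H'}$.

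For comparison, the argument usually given for Watanabe's lemma packages the same computation module-theoretically: one shows every element of $H'$ has a \emph{unique} expression $ch+jb$ with $h\in H$, $0\le j<c$ (existence uses $b\in H$; uniqueness uses $\gcd(b,c)=1$ exactly as in your coprimality step), so $K[H']$ is free over $K[cH]\cong K[H]$ with basis $1,Y^b,\ldots,Y^{(c-1)b}$; on the other side $T'/(I_H,f)T'$ is free over $T/I_H$ with basis $1,t_{m+1},\ldots,t_{m+1}^{c-1}$ because $f$ is monic of degree $c$ in $t_{m+1}$, and the natural surjection $T'/(I_H,f)T'\to K[H']$ matches the two bases, hence is an isomorphism. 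Your route trades this freeness bookkeeping for a direct reduction of binomials; the two proofs hinge on the identical numerical fact, and yours has the merit of using nothing beyond the definition of the toric ideal.
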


This theorem has the following nice  consequence.

\begin{cor}
\label{niceconsequence}
Let $H$ be the numerical semigroup which for $i=1,\ldots,r$ is obtained from
$\mathbb{N}$  by iterating  the gluing construction with respect to the
pairs of positive integers  $(b_i,c_i)$ with $\gcd(b_i,c_i)=1$ and $c_i>1$.
Then  $I_H$ is generated by binomials of the form
\[
t_{2}^{c_1}-u_1,\;  t_3^{c_2}-u_2,\; \ldots,\; t_{r+1}^{c_r}-u_r,
\]
where for $i=1,\ldots,r$, $u_i$ is a monomial in $K[t_1,\ldots,t_i]$.

In particular, $K[H]$ is a complete intersection.
\end{cor}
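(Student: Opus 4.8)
The plan is a straightforward induction on the number $r$ of gluings, with Theorem~\ref{gluingrelation} supplying the inductive step. Write $H_0=\mathbb{N}=\langle 1\rangle$ and let $H_j$ be the semigroup produced after $j$ gluings, so that $H=H_r$; then $H_j$ carries a generating set of $j+1$ elements, presented by $\phi_j\colon T_j=K[t_1,\ldots,t_{j+1}]\to K[H_j]$ as in the excerpt. The statement I would carry through the induction is: $I_{H_j}=(f_1,\ldots,f_j)$ with $f_i=t_{i+1}^{c_i}-u_i$ and $u_i$ a monomial in $K[t_1,\ldots,t_i]$.

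The base case $j=0$ is trivial: $K[H_0]=K[Y]$, the map $K[t_1]\to K[Y]$ is an isomorphism, and $I_{H_0}=(0)$. For the inductive step, assume the claim for $H_j$ with generating set $a_1^{(j)},\ldots,a_{j+1}^{(j)}$ attached to $t_1,\ldots,t_{j+1}$. The next semigroup is $H_{j+1}=\langle b_{j+1},c_{j+1}H_j\rangle$; following the convention of Theorem~\ref{gluingrelation}, I attach the rescaled old generators $c_{j+1}a_1^{(j)},\ldots,c_{j+1}a_{j+1}^{(j)}$ to $t_1,\ldots,t_{j+1}$ and the new generator $b_{j+1}$ to $t_{j+2}$. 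Since $b_{j+1}\in H_j$, I may write $b_{j+1}=\sum_{i=1}^{j+1}\beta_i a_i^{(j)}$ with $\beta_i\in\mathbb{N}$, and the theorem yields
\[
I_{H_{j+1}}=(I_{H_j},\,f_{j+1})T_{j+1},\qquad f_{j+1}=t_{j+2}^{c_{j+1}}-\prod_{i=1}^{j+1}t_i^{\beta_i}.
\]
Here $u_{j+1}=\prod_{i=1}^{j+1}t_i^{\beta_i}$ is a monomial in $K[t_1,\ldots,t_{j+1}]$, so together with the inductive hypothesis we obtain $I_{H_{j+1}}=(f_1,\ldots,f_{j+1})$ in the asserted form. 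Taking $j=r$ gives the first assertion.

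For the complete intersection conclusion I would argue by a dimension count. We have $K[H]=T_r/I_H$ with $T_r=K[t_1,\ldots,t_{r+1}]$ a polynomial ring in $r+1$ variables and $I_H=(f_1,\ldots,f_r)$ generated by $r$ elements. As a numerical semigroup ring $K[H]$ is a one-dimensional domain (a subring of $K[Y]$ over which $K[Y]$ is integral), so ${\rm height}\,I_H=\dim T_r-\dim K[H]=(r+1)-1=r$. An ideal of height $r$ generated by $r$ elements in the Cohen--Macaulay ring $T_r$ is generated by a regular sequence; hence $K[H]=T_r/I_H$ is a complete intersection.

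The argument is essentially bookkeeping, and the only point that genuinely needs care is the consistency of the variable indexing across gluings: the binomials $f_1,\ldots,f_j$ built at earlier stages must remain valid relations after every old generator has been rescaled by the subsequent factors $c_{j+1},c_{j+2},\ldots$. This causes no difficulty, because a binomial relation among the $a_i^{(j)}$ records an integer linear dependence that is preserved when all generators are multiplied by a common factor; this is exactly what is packaged in the identity $I_{H'}=(I_H,f)T'$ of Theorem~\ref{gluingrelation}, so invoking the theorem suffices and no separate check is required.
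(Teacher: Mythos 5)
Your proof is correct and follows essentially the same route as the paper's: the first assertion by iterating Theorem~\ref{gluingrelation} (the paper leaves the induction implicit where you spell it out), and the complete intersection conclusion by the same height count $\mbox{height}\, I_H = (r+1)-1 = r$ against the $r$ binomial generators.
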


\begin{proof}
The first part of the statement follows from Theorem~\ref{gluingrelation}.
Note that $\mbox{height}~I_H$ is equal to the difference of  embedding dimension
and  dimension of $K[H]$. Thus,  $\mbox{height}~I_H =r+1-1=r$. Since $I_H$ is
generated by $r$ elements, $K[H]$ is a complete intersection.
\end{proof}

We now will apply Theorem~\ref{gluingrelation} and Corollary~\ref{niceconsequence}
to two special numerical semigroups, namely the ones  given by  
 $\{LT(f_1), \ldots, LT(f_{s+1})\}$.

\begin{lemma}\label{rodd}
Let $p>2$ be a prime number and $s$ a positive integer.  Recursively we
define the semigroups
\[H_0=\mathbb{N}, H_1=\langle 2,pH_0 \rangle,
H_i=\langle p^{2i-2}+2,  pH_{i-1} \rangle, i=2,\ldots,s.\]
Then
\begin{enumerate}
\item[(a)]For $s\geq 1$
$$H_s = \langle p^s, 2p^{s-1}, \{p^i+2p^{2s-2-i}\mid s\leq i\leq 2s-2\}\rangle.$$
\item[(b)]
$K[H_s]=K[t_1,\ldots, t_{s+1}]/I_{H_s}$ is a complete intersection with
\[
I_{H_s}=(p_1({\bf \underline{t}}),\ldots,p_s({\bf \underline{t}})),
\]
where $p_1({\bf \underline{t}})= t_2^2- t_1^p$,  $p_2({\bf \underline{t}}) = 
t_3^p-t_1t_2^p$
and $p_i({\bf \underline{t}}) = t_{i+1}^p-t_2^{p^{i-3}(p^2-1)}t_i$ for $i=3,\ldots,s$.
\end{enumerate}
\end{lemma}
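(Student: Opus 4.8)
The plan is to read both parts off the gluing construction: part~(a) records what the generators of $H_{i-1}$ become under the successive scalings, and part~(b) is an iterated application of Theorem~\ref{gluingrelation}, with the final complete intersection statement coming from Corollary~\ref{niceconsequence}. The only genuine input beyond bookkeeping is a single arithmetic identity, which I isolate below.

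For part~(a) I would unwind the recursion. The passage from $H_{i-1}$ to $H_i=\langle p^{2i-2}+2,\,pH_{i-1}\rangle$ multiplies every current generator by $p$ and adjoins the one new generator $p^{2i-2}+2$. Consequently, after carrying out the later steps $i+1,\dots,s$, the generator adjoined at step~$i$ survives in $H_s$ scaled by $p^{s-i}$, namely as $p^{s-i}(p^{2i-2}+2)=p^{s+i-2}+2p^{s-i}$, while the two generators $2$ and $p$ of $H_1$ become $2p^{s-1}$ and $p^{s}$. A short induction on $s$ then assembles these into the generating set asserted in~(a).

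For part~(b) I would first fix the labelling: let $t_1,t_2$ correspond to the descendants of the generators $2,p$ of $H_1$, and let $t_{i+1}$ correspond to the generator $p^{2i-2}+2$ adjoined at step~$i$ for $i=2,\dots,s$. Each step must be checked to be an admissible gluing. Coprimality is immediate, since $p^{2i-2}+2\equiv 2\pmod p$ and $p>2$ give $\gcd(p^{2i-2}+2,p)=1$. The membership $p^{2i-2}+2\in H_{i-1}$ is the substantive point, and I would prove it by exhibiting an explicit nonnegative representation in the generators of $H_{i-1}$: for $i\ge 3$ one uses $a_2=p^{i-1}$ and $a_i=p^{2i-4}+2$ together with the identity
\[
p^{i-3}(p^2-1)\cdot p^{i-1}+\bigl(p^{2i-4}+2\bigr)=p^{2i-2}+2,
\]
and for $i=2$ the representation $p^2+2=1\cdot 2+p\cdot p$. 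Feeding these representations into Theorem~\ref{gluingrelation} gives $I_{H_i}=(I_{H_{i-1}},\,p_i)$, where the new binomial is $p_i=t_{i+1}^{\,p}-t_2^{\,p^{i-3}(p^2-1)}t_i$ for $i\ge 3$, while $p_2=t_3^{\,p}-t_1t_2^{\,p}$ and the very first gluing $H_1=\langle 2,pH_0\rangle$ (with $b=2=2\cdot 1$, $c=p$) yields $p_1=t_2^{2}-t_1^{\,p}$. Collecting these for $i=1,\dots,s$ produces exactly the stated generators of $I_{H_s}$. Finally, since $H_s$ is obtained from $\mathbb{N}$ by $s$ iterated gluings, Corollary~\ref{niceconsequence} shows $K[H_s]$ is a complete intersection: the $s$ binomials $p_1,\dots,p_s$ cut out an ideal of height $\,(s+1)-1=s$.

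The main obstacle is precisely the arithmetic identity certifying $p^{2i-2}+2\in H_{i-1}$. This one computation does double duty: it verifies admissibility of the $i$-th gluing and simultaneously pins down the coefficient $p^{i-3}(p^2-1)$ that appears in the relation $p_i$. Once this identity and the variable labelling are in place, the scalings of part~(a) and the accumulation of relations in part~(b) are routine.
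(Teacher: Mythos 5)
Your proposal is correct and takes essentially the same approach as the paper: part (a) by unwinding the recursion (the paper phrases this as induction on $s$), and part (b) by checking $\gcd(p^{2i-2}+2,p)=1$ and certifying $p^{2i-2}+2\in H_{i-1}$ through explicit nonnegative representations --- your single identity $p^{i-3}(p^2-1)\cdot p^{i-1}+\bigl(p^{2i-4}+2\bigr)=p^{2i-2}+2$ is exactly the paper's items (ii)--(iv) with the index shifted --- followed by the same appeal to Theorem~\ref{gluingrelation} for the binomials and Corollary~\ref{niceconsequence} for the complete intersection property. One remark: the generating set you actually derive, namely $2p^{s-1},\,p^{s},\,p^{s+i-2}+2p^{s-i}$ for $2\le i\le s$, is the correct unwinding of the recursion (and is what Theorem~\ref{t1} needs), whereas the display printed in part (a) of the lemma does not match it (it reproduces the even-case generators of Lemma~\ref{reven}); this mismatch is a typo in the paper's statement --- the paper's own proof of (a) is likewise inconsistent with its stated recursion --- so your claim that the two lists agree silently corrects the statement rather than revealing a gap in your argument.
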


\begin{proof}~(a)\quad
The assertion is  obvious 
for $s= 1$ and  $s=2$. The rest follows by induction on $s$.

\noindent~(b) {Claim}:\quad For each $1\leq i\leq s-1$, 
 the numerical semigroups $H_i$ is gluing of $H_{i-1}$ with respect to
$(p^{2i-2}+2, p)$.

We have $\gcd(p^{2i-2}+2, p) =1$. So we only need to show that 
$p^{2i-2}+2 \in H_i$ for $2\leq i\leq s$.

  Indeed, we have

(i) $2\in H_0$;\\

(ii) $p^2+2= 1\cdot 2+ p\cdot p$ with $2,p\in H_1$;\\

(iii) $p^4+2= (p^2-1)\cdot p^2+1\cdot (p^2+2)$ with $p^2,p^2+2\in H_2$;\\

(iv) for $i=3,\ldots, s-1$,  $p^{2(i+1)-2}+2=(p^i-p^{i-2})\cdot p^i+1\cdot
(p^{2i-2}+2)$

\hspace{0.8cm}with
$p^i, p^{2i-2}+2\in H_i$.

This proves the claim, and hence $K[H_s]$ is a complete intersection
by Corollary~\ref{niceconsequence}. Finally, Theorem~\ref{gluingrelation}
together with (i)-- (iv)
give us the generators of $I_{H_s}$.
\end{proof}

The second special case to be considered is given in

\begin{lemma}\label{reven}
Let $p>3 $ be a prime number and $s$ a positive integer.  Recursively we
define the semigroups
\[
H_0=\mathbb{N},
H_i=\langle p^{2i-1}+2,  pH_{i-1} \rangle, i=1,\ldots,s.
\]
Then
\begin{enumerate}
\item[(a)] For $s\geq 1$
$$H_s=\langle p^s, \{p^i+2p^{2s-1-i}\mid s\leq i \leq 2s-1\}\rangle $$
\item[(b)]
$K[H_s]=K[t_1,\ldots, t_{s+1}]/I_{H_s}$ is a complete intersection with
\[
I_{H_s} = (p_1({\bf \underline{t}}),\ldots, p_s({\bf \underline{t}})),
\]
where $p_1({\bf \underline{t}}) =  t_2^p-t_1^{p+2}$
and $p_i({\bf \underline{t}})=t_{i+1}^p-t_1^{p^{i-2}(p^2-1)}t_i$ for $i=2,\ldots,s$.
\end{enumerate}
\end{lemma}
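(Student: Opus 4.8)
The plan is to follow verbatim the two–step strategy used in the proof of Lemma~\ref{rodd}: first establish the closed form of part~(a) by induction on $s$, and then realise $H_s$ as an iterated gluing, so that Corollary~\ref{niceconsequence} supplies the complete intersection property and Theorem~\ref{gluingrelation} supplies the explicit binomial relations $p_i({\bf \underline{t}})$.

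For part~(a) I would induct on $s$. The base case $s=1$ is immediate from the definition, $H_1=\langle p+2,\,p\rangle=\langle p^{1},\,p^{1}+2\rangle$. For the inductive step, assume
$$H_{s-1}=\langle p^{s-1},\,p^{s-1}+2p^{s-2},\,\ldots,\,p^{2s-4}+2p,\,p^{2s-3}+2\rangle .$$
Multiplying every generator by $p$ gives
$$pH_{s-1}=\langle p^{s},\,p^{s}+2p^{s-1},\,\ldots,\,p^{2s-3}+2p^{2},\,p^{2s-2}+2p\rangle ,$$
and adjoining the new generator $p^{2s-1}+2$ produces exactly the asserted generating set for $H_s=\langle p^{2s-1}+2,\,pH_{s-1}\rangle$.

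For part~(b) the key claim is that for each $1\le i\le s$ the semigroup $H_i$ is the gluing of $H_{i-1}$ with respect to the pair $(p^{2i-1}+2,\,p)$. Since $p>3$ we have $\gcd(p^{2i-1}+2,\,p)=\gcd(2,p)=1$ and $p>1$, so the only thing to check is the membership $p^{2i-1}+2\in H_{i-1}$. For $i=1$ this is trivial as $H_0=\mathbb{N}$. For $i\ge 2$, part~(a) identifies $p^{i-1}$ and $p^{2i-3}+2$ as the first and last generators of $H_{i-1}$, and the identity
$$p^{2i-1}+2=(p^{i}-p^{i-2})\,p^{i-1}+1\cdot(p^{2i-3}+2),$$
with $p^{i}-p^{i-2}=p^{i-2}(p^{2}-1)\ge 0$, exhibits $p^{2i-1}+2$ as a non-negative combination of generators of $H_{i-1}$. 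This proves the claim, whence Corollary~\ref{niceconsequence} gives at once that $K[H_s]$ is a complete intersection defined by $s$ binomials.

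It remains to read off the precise binomials, and this index bookkeeping is the only genuinely delicate point, so it is where I expect the main difficulty. By Theorem~\ref{gluingrelation} each gluing multiplies the old generators by $p$ (leaving their variable labels $t_1,\ldots,t_i$ unchanged) and introduces the new generator as $t_{i+1}$; hence, inductively, after $i-1$ gluings the variable $t_1$ corresponds to the smallest generator $p^{i-1}$ of $H_{i-1}$ and $t_i$ to its most recently adjoined generator $p^{2i-3}+2$. Substituting the representation of $p^{2i-1}+2$ above into the formula $f=t_{i+1}^{c}-\prod_j t_j^{b_j}$ of Theorem~\ref{gluingrelation} then yields, for $i\ge 2$, the relation $t_{i+1}^{p}-t_1^{p^{i-2}(p^{2}-1)}t_i$, and for $i=1$ the relation $t_2^{p}-t_1^{p+2}$; these are exactly $p_i({\bf \underline{t}})$ and $p_1({\bf \underline{t}})$. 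The hard part is to be certain that the scaling by $p$ at each step preserves the labelling, so that the two generators occurring in $p^{2i-1}+2$ are consistently $t_1$ (the smallest) and $t_i$ (the newest); once this is verified the stated generators of $I_{H_s}$ fall out directly.
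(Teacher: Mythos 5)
Your proof is correct and is exactly the argument the paper intends: the paper's own proof of this lemma is a one-line deferral to the proof of Lemma~\ref{rodd}, and your induction for part (a), the gluing claim for the pairs $(p^{2i-1}+2,\,p)$, and the extraction of the binomials via Theorem~\ref{gluingrelation} and Corollary~\ref{niceconsequence} carry out precisely that argument in the even case, including the correct bookkeeping that $t_1$ always labels the power of $p$ and $t_i$ the most recently adjoined generator. The only point worth noting is that your membership identity $p^{2i-1}+2=p^{i-2}(p^2-1)\cdot p^{i-1}+1\cdot(p^{2i-3}+2)$ works uniformly for all $i\ge 2$, whereas the odd case in Lemma~\ref{rodd} required separate identities for the first few gluing steps.
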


\begin{proof}
The proof proceeds in the same way as that of Proposition~\ref{rodd}.
\end{proof}
\vspace{10pt}

Here we follow the Notation~\ref{n2}.

\begin{thm}\label{t1}Let  $r$ be an odd integer such that $r = 2s-1\geq 3$. 

Then there exists homogeneous polynomials $f_1, f_2, 
\ldots, f_{s+1}\in {\mathcal{K}}[X, Y, Z]^G$ 
such that 
\begin{enumerate}
\item[(1)] $\{X, f_1, f_2, \ldots, f_{s+1}, N_G(Z)\}$ is a SAGBI basis for 
${\mathcal{K}}[X, Y, Z]^G$. 
\item[(2)] The ring 
${\mathcal{K}}[X, Y, Z]^G$ is a complete intersection ring.
\end{enumerate} 
Moreover 
$$LT({\mathcal{K}}[X, Y, Z]^G) = {\mathcal{K}}[X, Y^{2p^{s-1}}, Y^{p^s}, \{Y^{p^{s+i-3}+2p^{s-i+1}}\}_{3\leq i\leq s}, Z^{p^r}]$$
and
$${\mathcal{K}}[X, Y, Z]^G = \frac{{\mathcal{K}}[t_0, t_1, \ldots, t_{s+2}]}{(q_1({\bf\underline{ t}}), \ldots,
q_s({\bf\underline{ t}}))},$$
where 
 $$\begin{array}{lcl}
q_1({\bf\underline{ t}}) & = & F_3({\bf\underline{ t}}) +
\tfrac{2b_2}{a_0}t_3 t_0^{p^s-2p^{s-2}},\\\\
  q_{j+2}({\bf\underline{ t}}) & = & F_{j+4}({\bf \underline{t}}) -\tfrac{B(j-1)^pB(j+1)}{B(j)^{p+1}}
t_0^{2p^{s-1-j}-2p^{s-3-j}} t_{j+4},\quad\mbox{for}\quad  0\leq j\leq s-2.
\end{array}$$
 
\end{thm}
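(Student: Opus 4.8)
The plan is to prove Theorem~\ref{t1} by assembling three ingredients already in place: the explicit generators of Proposition~\ref{odd}, the complete-intersection structure of the leading-term semigroup from Lemma~\ref{rodd}, and the Robbiano--Sweedler criterion (Theorem~\ref{RSKM}); Remark~\ref{r1} will then transfer everything from the subalgebra $A = k[X, f_1, \ldots, f_{s+1}, N_G(Z)]$ to the full invariant ring $B = k[X,Y,Z]^G$. So the real task is to show that $\sB = \{X, f_1, \ldots, f_{s+1}, N_G(Z)\}$ is a SAGBI basis for $A$.

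First I would identify the leading-term algebra. By Proposition~\ref{odd} and Lemma~\ref{LM} the elements of $\sB$ have leading monomials $X$, $Y^{2p^{s-1}}$, $Y^{p^s}$, $Y^{p^{s+i-3}+2p^{s-i+1}}$ (for $f_i$, $3 \le i \le s+1$) and $Z^{p^r}$. Because $X$ and $Z^{p^r}$ involve variables disjoint from $Y$, the toric ideal $\ker\phi$ of Theorem~\ref{RSKM} is generated by the binomials coming from the numerical semigroup on the $Y$-exponents alone; a direct induction matching the recursion of Lemma~\ref{rodd} shows this is exactly the semigroup $H_s$. Lemma~\ref{rodd} then gives that $H_s$ is a complete intersection with the $s$ defining binomials $t_2^2 - t_1^p$, $t_3^p - t_1 t_2^p$ and $t_{i+1}^p - t_2^{p^{i-3}(p^2-1)}t_i$ ($3 \le i \le s$), which are precisely the $s$ tête-à-têtes $(f_1^p, f_2^2)$, $(f_3^p, f_1 f_2^p)$, $(f_i^p, f_{i-1}f_2^{(p^2-1)p^{i-4}})$ of the conjecture.

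By Theorem~\ref{RSKM} it then suffices to check that each of these $s$ tête-à-têtes subducts to $0$, and this can be read off from the recursive identities of Proposition~\ref{odd}. For every tête-à-tête but the last, the defining identity of the next generator rewrites $f_{i+1}^p$ minus its partner monomial as an explicit $k$-linear combination of products of the generators $X, f_1, \ldots, f_{s+1}$; by Lemma~\ref{LM} all of these products have leading monomial strictly below $LT(f_{i+1}^p)$, the dominant one being a power of $X$ times $f_{i+2}\in\sB$ and the rest being the correction terms $E_{\eta_1^j}({\bf\underline f})$ and $E_{\eta_2^j}({\bf\underline f})$ scaled by powers of $X$. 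This is precisely equation~(\ref{subduct}), and clearing denominators returns the lifts $q_1, \ldots, q_{s-1}$. The final tête-à-tête $(f_{s+1}^p, f_s f_2^{(p^2-1)p^{s-3}})$ is where the norm generator is indispensable: the identity for $F_{s+2}({\bf\underline f})$ reduces it, modulo lower products, to a scalar multiple of $X^{2p}f_{s+2}$, and since $LT(f_{s+2}) = -Z^{p^r} = -LT(N_G(Z))$ the leading monomial $X^{2p}Z^{p^r}$ is furnished by the product $X^{2p}N_G(Z)$. I expect the one genuinely delicate point to be checking that the residual $X^{2p}(f_{s+2}+N_G(Z))$, which lies in $A$ and has smaller leading monomial, subducts to $0$; I would settle this using the localization identity $A[X^{-1}] = B[X^{-1}]$ of Theorem~\ref{CSW2} (which places $f_{s+2}$ in $A[X^{-1}]$) together with the SAGBI/Divide-by-$X$ reduction, producing the last lift $q_s$.

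Once every tête-à-tête subducts, Theorem~\ref{RSKM} simultaneously gives that $\sB$ is a SAGBI basis for $A$, that $LM(A)$ is the algebra generated by the leading monomials of $\sB$ (the asserted $k[X, Y^{2p^{s-1}}, Y^{p^s}, \ldots, Z^{p^r}]$), and that $\ker\varphi = (q_1, \ldots, q_s)$. Remark~\ref{r1}, which packages Theorem~\ref{CSW2} and Lemma~\ref{l1} into Theorem~\ref{CSW}, then upgrades this to $A = k[X,Y,Z]^G$ with $\sB$ a SAGBI basis for the invariant ring. For the complete-intersection statement I would note that $\dim k[X,Y,Z]^G = 3$ (invariants of a finite group acting on three variables), so $\ker\varphi$ has height $(s+3)-3 = s$; being generated by the $s$ elements $q_1, \ldots, q_s$, it is a complete-intersection ideal, and hence $k[X,Y,Z]^G \cong k[t_0, \ldots, t_{s+2}]/(q_1, \ldots, q_s)$ is a complete intersection of embedding dimension $s+3$, the generators $X, f_1, \ldots, f_{s+1}, N_G(Z)$ being minimal.
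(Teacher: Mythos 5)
Your overall architecture --- Proposition~\ref{odd} for the generators, Lemma~\ref{rodd} for the leading-term semigroup, Theorem~\ref{RSKM} for the subduction criterion, Remark~\ref{r1} to descend to the invariant ring, and the height count for the complete intersection --- is the paper's, and it goes through everywhere except at the one point you yourself flag: the final t\^{e}te-\`{a}-t\^{e}te. There your argument has a genuine gap. Working with $\sB=\{X,f_1,\dots,f_{s+1},N_G(Z)\}$ from the start, the identity defining $f_{s+2}$ rewrites $f_{s+1}^p-f_2^{p^{s-3}(p^2-1)}f_s$ as correction terms (monomials in $X,f_1,\dots,f_{s+1}$ with small leading monomials) plus a nonzero constant times $X^{2p}f_{s+2}$; substituting $f_{s+2}=-N_G(Z)+(f_{s+2}+N_G(Z))$ leaves the residual $X^{2p}(f_{s+2}+N_G(Z))$. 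Knowing this residual lies in $A$ (which it does, since $X^{2p}f_{s+2}$ is a constant times $F_{s+2}({\bf \underline{f}})\in k[X,f_1,\dots,f_{s+1}]$) is not what equation (\ref{subduct}) demands: you must exhibit it as a $k$-linear combination of monomials in the elements of $\sB$, \emph{each} with leading monomial below $LM(f_{s+1}^p)$, and the representation you have has cancelling top terms (the expression of $F_{s+2}({\bf \underline{f}})$ itself begins with $f_{s+1}^p$). Running the subduction algorithm on the residual instead would require its leading monomial, and those of all successive remainders, to lie in $k[LM(\sB)]$ --- but that is exactly the statement $LM(B)=k[LM(\sB)]$ being proved, so your appeal to Theorem~\ref{CSW2} plus the SAGBI/Divide-by-$X$ reduction is circular: the localization identity only places $f_{s+2}$ in $A[X^{-1}]$, which you already know explicitly, and gives no control over leading monomials of the residual.

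The paper closes this hole with a maneuver missing from your proposal: it applies Theorem~\ref{RSKM} to $\sB_1=\{X,f_1,\dots,f_{s+1},f_{s+2}\}$, with the element $f_{s+2}$ of Proposition~\ref{odd} (not $N_G(Z)$) as last basis element. Then \emph{every} t\^{e}te-\`{a}-t\^{e}te subducts exactly, with no residual: the defining relation of $f_{j+4}$ terminates in the term $\bigl(B(j-1)^pB(j+1)/B(j)^{p+1}\bigr)X^{2p^{s-1-j}-2p^{s-3-j}}f_{j+4}$, which for $j=s-2$ is a constant times $X^{2p}f_{s+2}$, a monomial in $\sB_1$. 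Remark~\ref{r1} (whose proof, via Theorem~\ref{CSW}, Theorem~\ref{CSW2} and Lemma~\ref{l1}, uses only the leading term of the last generator, hence applies verbatim with $f_{s+2}$ in place of $N_G(Z)$) then gives $A=B$ and that $\sB_1$ is a SAGBI basis of $B$. Finally, since $\sB\subset B$ and $\sB$ has the same leading monomials as $\sB_1$, the set $\sB$ is automatically a SAGBI basis of $B$ as well --- no subduction of $f_{s+2}+N_G(Z)$ is ever performed (a posteriori it does subduct, but only because the theorem is by then proved). If you reorganize the last step this way, the rest of your argument --- the identification of the toric ideal with that of $H_s$ from Lemma~\ref{rodd}, the lifts $q_1,\dots,q_s$, and the dimension/height count --- matches the paper's proof and is correct.
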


\begin{proof}Let $f_1, f_1. \ldots, f_{s+2}\in {\mathcal{K}}[X, Y, Z]^G$ 
be as in Proposition~\ref{odd}. Note that ${\mathcal{K}}[X, f_1, f_2] = {\mathcal{K}}[X, g_1, g_2]$, where 
$g_1$, $g_2$ are the elements as in Theorem~\ref{CSW2}.

\vspace{5pt}

\noindent~(1).\quad Let $f_0 = X$. 
By Remark~\ref{r1}, it is enough to prove that  
$\sB_1 = \{f_0, f_1, \ldots, f_{s+1}, f_{s+2}\}$ is a 
SAGBI basis for $A = {\mathcal{K}}[f_0, f_1, \ldots, f_{s+2}]$.

Let  ${\tilde  A} = {\mathcal{K}}[LT(f_0), LT(f_1) \ldots, LT(f_{s+2})]$ 
then  ${\tilde  A} = {\mathcal{K}}[H_s][X, Z^{p^r}]$ where 
$H_s$ is as in Lemma~\ref{rodd}, and 
 the map
$$\phi: {\mathcal{K}}[{\bf \underline{t}}] = {\mathcal{K}}[t_0, t_1, \ldots, t_{s+2}]
\longrightarrow {\tilde A}\quad\mbox{given by}\quad 
t_i \to LT(f_i),$$ has 
$$\mbox{Ker}~\phi = \left(p_1({\bf\underline{ t}}), 
p_2({\bf \underline{t}}), \ldots, p_s({\bf\underline{ t}})\right),$$ 
where
$p_1({\bf \underline{t}})  = t_2^2-t_1^p,\quad  p_2({\bf\underline{ t}}) = 
t_3^p- t_1t_2^p$ 
and   
$$p_{j+2}({\bf\underline{ t}}) =  t^p_{j+3} -  t_2^{p^{j-1}(p^2-1)}t_{j+2}, 
\quad\mbox{for}\quad 1\leq j \leq s-2.$$

By Theorem~\ref{RSKM}, it is enough to show that 
the pairs
\begin{multline}\label{subduct}
(f_2^2,~~f_1^p),\quad    
(f_3^p- f_1f_2^p)\quad\mbox{and}\quad   
 (f^p_{j+3} -  f_2^{p^{j-1}(p^2-1)}f_{j+2}), 
\quad\mbox{for}\quad 1\leq j \leq s-2
\end{multline}

subducts to $0$ in $A$.

Now (\ref{f3}) gives  

\begin{equation}\label{p1f} p_1({\bf\underline{ f}}) = -\left(\frac{2b_1}{a_0}\right)\cdot f_1^{(p+1)/2}X^{p^s-p^{s-1}} -
\left(\frac{2b_2}{a_0}\right)
f_3 X^{p^s-2p^{s-2}},\end{equation}

 where $LT(f_1^{(p+1)/2}X^{p^s-p^{s-1}}) > 
LT(f_3 X^{p^s-2p^{s-2}})$.

Evaluating (\ref{F4t}) and (\ref{Fj+4t}) at ${\underline{f}}$,
for $0\leq j\leq s-2$, we    get
\begin{multline}\label{pj+2} p_{j+2}({\bf\underline{ f}})   =   
 - \left(N_{1j}\right)\cdot E_{1j}({\bf \underline{f}})X^{p^{s-1-j}}
-\left(L_{2j}\right)\cdot  f_2^{p^{j}(p-1)}f_{j+3}X^{2p^{s-1-j}-2p^{s-2-j}}\\\\
 - \left(N_{2j}\right)\cdot E_{2j}({\bf\underline{ f}})({\bf \underline{f}})X^{2p^{s-1-j}-p^{s-2-j}} + 
\left(\tfrac{B(j-1)^pB(j+1)}{B(j)^{p+1}}\right)f_{j+4} X^{2p^{s-1-j}-2p^{s-3-j}},\end{multline}
where 
\begin{multline*}
LT(E_{1j}({\bf\underline{ f}})X^{p^{s-1-j}}) > 
LT(f_2^{p^{j}(p-1)}f_{j+3}X^{2p^{s-1-j}-2p^{s-2-j}})\\
 > LT(E_{2j}({\bf \underline{f}})X^{2p^{s-1-j}-p^{s-2-j}}) > 
LT(f_{j+4} X^{2p^{s-1-j}-2p^{s-3-j}}).\end{multline*}
 
 In particular every pair as given in (\ref{subduct}) subducts to zero in $A$.

\vspace{5pt}

\noindent~(2).\quad  Let $J$ be the kernel of the 
${\mathcal{K}}$-algebra homomorphic
$$\varphi: R={\mathcal{K}}[t_0,\ldots, t_{s+2}]\longrightarrow A\quad\mbox{given by}\quad
t_0\mapsto X,~~~ t_i\mapsto f_i,$$
for $i=1,\ldots, s+2$.

Since $\sB_1 = \{X,f_1,\ldots,f_{s+1}, f_{s+2}\}$ is a 
SAGBI basis for $B ={\mathcal{K}}[X,Y,Z]^G$, the relations generating $J$ come
from the  subductions of the t\^{e}te-\'{a}-t\^{e}tes
$(f_2^2,  f_1^p)$,  $(f_3^p, f_1f_2^p)$
and $(f_{i+1}^p, f_2^{p^{i-3}(t^2-1)}f_i)$ for $i=3,\ldots,s$, 
which result  from   the binomial relations $p_1({\bf\underline{ t}}), p_2({\bf\underline{ t}}), 
\ldots, p_s({\bf\underline{ t}})$. 
Since 
$$ F_3({\bf\underline{ t}}) =  p_1({\bf\underline{ t}}) + \left(\tfrac{2b_1}{a_0}\right)\cdot t_1^{(p+1)/2}
t_0^{p^s-p^{s-1}} $$
and for $0\leq j\leq s-2$

\begin{multline*}
F_{j+4}({\bf \underline{t}}) = p_{j+2}({\bf \underline{t}}) +   
  \left(N_{1j}\right)\cdot E_{1j}({\bf \underline{t}})t_0^{p^{s-1-j}}\\
+ \left(L_{2j}\right)\cdot  t_2^{p^{j}(p-1)}t_{j+3}t_0^{2p^{s-1-j}-2p^{s-2-j}}
+ \left(N_{2j}\right)\cdot E_{2j}({\bf\underline{ t}})({\bf\underline{ t}})
t_0^{2p^{s-1-j}-p^{s-2-j}},
\end{multline*}
it
follows, by (\ref{p1f}) and (\ref{pj+2}) that $J$ is generated $q_1({\bf\underline{ t}}), \ldots, q_s({\bf\underline{ t}})$.
 Since  the Krull
dimension of $A$ is $3$ and its  embedding  dimension is $s+3$,
it follows that $\mbox{height}~J=s$. This proves that $A$ is a
complete intersection.

\end{proof}

Here we follow the Notation~\ref{ne}.

\begin{thm}\label{t1e}Let  $r=2s\geq 4$ be an even integer  and $p>3$ be a 
prime number.

Then there exists homogeneous polynomials $f_1, f_2, 
\ldots, f_{s+1}\in {\mathcal{K}}[X, Y, Z]^G$ 
such that 
\begin{enumerate}
\item[(1)] $\{X, f_1, f_2, \ldots, f_{s+1}, N_G(Z)\}$ is a SAGBI basis for 
${\mathcal{K}}[X, Y, Z]^G$. 
\item[(2)] The ring 
${\mathcal{K}}[X, Y, Z]^G$ is a complete intersection ring.
\end{enumerate} 
Moreover 
$$LT({\mathcal{K}}[X, Y, Z]^G) = {\mathcal{K}}[X, Y^{p^s}, 
\{Y^{p^{s+i-2}+2p^{s-i+1}}\}_{2\leq i\leq s}, Z^{p^r}]$$
and

$${\mathcal{K}}[X, Y, Z]^G = \frac{{\mathcal{K}}[t_0, t_1, \ldots, t_{s+2}]}{(q_1({\bf\underline{ t}}), \ldots,
q_s({\bf \underline{t}}))},$$
where  
 $$\begin{array}{lcl}
q_1({\bf \underline{t}}) & =  & F_3({\bf \underline{t}}) -\tfrac{c_0^pC(0)}{c_1^{p+1}}t_3 
t_0^{2p^s-2p^{s-2}},\\\\
q_{j+2}({\bf\underline{ t}}) & = & F_{j+4}({\bf\underline{ t}}) -\tfrac{C(j-1)^pC(j+1)}{C(j)^{p+1}}
t_0^{2p^{s-1-j}-2p^{s-3-j}} t_{j+4},\quad\mbox{for}\quad  1\leq j\leq s-2.\end{array}$$
 \end{thm}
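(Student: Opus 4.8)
The plan is to transport the proof of Theorem~\ref{t1} to the even case, replacing Proposition~\ref{odd} by Proposition~\ref{even} and Lemma~\ref{rodd} by Lemma~\ref{reven}. Let $f_1,\ldots,f_{s+2}\in k[X,Y,Z]^G$ be the homogeneous polynomials produced in Proposition~\ref{even}, and put $A=k[f_0=X,f_1,\ldots,f_{s+1},f_{s+2}]\subseteq k[X,Y,Z]^G$. As in Theorem~\ref{t1}, it is enough to prove that $\sB_1=\{f_0,f_1,\ldots,f_{s+2}\}$ is a SAGBI basis for $A$: Remark~\ref{r1} (which rests on Theorem~\ref{CSW}, Theorem~\ref{CSW2} and Lemma~\ref{l1}) then forces $A=k[X,Y,Z]^G$ and makes $\sB_1$ a SAGBI basis for it. Since $f_{s+2}$ and $N_G(Z)$ share the lead monomial $Z^{p^r}$, the set $\sB=\{X,f_1,\ldots,f_{s+1},N_G(Z)\}$ has the same lead-term algebra as $\sB_1$ and is therefore also a SAGBI basis; the displayed description of $LT(k[X,Y,Z]^G)$ is then just $k[H_s][X,Z^{p^r}]$ for the semigroup $H_s$ of Lemma~\ref{reven}.

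For part~(1) I would first identify $\tilde A=k[LT(f_0),\ldots,LT(f_{s+2})]$. By the leading-term data of Proposition~\ref{even} and Lemma~\ref{LMe} the $Y$-exponents of $LT(f_1),\ldots,LT(f_{s+1})$ are exactly the generators of $H_s$ listed in Lemma~\ref{reven}(a), so $\tilde A=k[H_s][X,Z^{p^r}]$. Lemma~\ref{reven}(b) then presents the toric kernel of $\phi\colon k[t_0,\ldots,t_{s+2}]\to\tilde A$, $t_i\mapsto LT(f_i)$, as $(p_1(\underline{t}),\ldots,p_s(\underline{t}))$ with $p_1(\underline{t})=t_2^p-t_1^{p+2}$ and $p_i(\underline{t})=t_{i+1}^p-t_1^{p^{i-2}(p^2-1)}t_i$ for $2\leq i\leq s$ (the variables $t_0,t_{s+2}$ being free). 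By the Robbiano--Sweedler criterion (Theorem~\ref{RSKM}) it remains only to check that each of these binomials subducts to $0$, and this is precisely what the inductive construction delivers: evaluating (\ref{F3te}) and (\ref{Fj+4e}) at $\underline{t}=\underline{f}$ writes $p_1(\underline{f})$ and each $p_{j+2}(\underline{f})$ as a $k$-linear combination of products $f^{\underline{a}}$, the strict lead-monomial inequalities among these terms being supplied by Lemma~\ref{LMe}.

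For part~(2), let $J=\ker\varphi$, where $\varphi\colon k[t_0,\ldots,t_{s+2}]\to A$ sends $t_0\mapsto X$, $t_i\mapsto f_i$ for $1\leq i\leq s+1$, and $t_{s+2}\mapsto N_G(Z)$. The last assertion of Theorem~\ref{RSKM} says that the lifts of the binomials $p_i(\underline{t})$ generate $J$; reading the correction terms off (\ref{F3te}) and (\ref{Fj+4e}) and using the defining formulas for $f_3$ and $f_{j+4}$ in Proposition~\ref{even} identifies these lifts with the $q_1(\underline{t}),\ldots,q_s(\underline{t})$ of the statement. Finally $A$ has Krull dimension $3$ and embedding dimension $s+3$, so $\mbox{height}~J=(s+3)-3=s$; as $J$ is generated by the $s$ elements $q_1,\ldots,q_s$, the ring $A=k[X,Y,Z]^G$ is a complete intersection.

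The only step carrying real weight is the subduction check of part~(1), and it is here that $p>3$ is indispensable: the reductions defining $f_2$ and $f_3$ in Proposition~\ref{even} --- dividing $\bar f_2+f_1^2$ by $X^{p^s-2p^{s-1}}$, and expanding $f_2^p$ modulo $X^{2p^s}(X,Y)$ --- require $p\geq 5$, which is exactly why the case $G=(\Z/3\Z)^{2s}$ is excluded. Granting Proposition~\ref{even}, Lemma~\ref{LMe}, Lemma~\ref{ML} and Lemma~\ref{reven}, everything else is the bookkeeping already rehearsed for Theorem~\ref{t1}, the sole point needing care being the monotonicity of lead monomials that certifies each subduction.
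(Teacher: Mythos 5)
Your proposal is correct and follows essentially the same route as the paper: the paper's own proof of Theorem~\ref{t1e} simply takes $f_0=X$ and $f_1,\ldots,f_{s+2}$ from Proposition~\ref{even}, identifies $\tilde A=k[H_s][X,Z^{p^r}]$ with $H_s$ as in Lemma~\ref{reven} so that $\ker\phi=(p_1(\underline{t}),\ldots,p_s(\underline{t}))$ with $p_1=t_2^p-t_1^{p+2}$ and $p_{j+2}=t_{j+3}^p-t_1^{p^j(p^2-1)}t_{j+2}$, and then declares the rest ``along the same lines as the odd case,'' i.e.\ exactly the Robbiano--Sweedler subduction check via (\ref{F3te}), (\ref{Fj+4e}) and the height-$s$ complete-intersection argument that you spell out. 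Your transported argument, including the reduction via Remark~\ref{r1}, the replacement of $f_{s+2}$ by $N_G(Z)$ on the strength of equal lead monomials, and the identification of the lifted relations with the $q_i(\underline{t})$, is precisely the intended proof.
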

\begin{proof}Here we choose $f_0 = X$ and $f_1, \ldots, f_{s+2}$ 
as in Proposition~\ref{even}.
Let  ${\tilde  A} = {\mathcal{K}}[LT(f_0=X), LT(f_1) \ldots, LT(f_{s+2})]$ 
then  ${\tilde  A} = {\mathcal{K}}[H_s][X, Z^{p^r}]$ where 
$H_s$ is as in Lemma~\ref{reven}, and 
then  the map
$$\phi: {\mathcal{K}}[{\bf\underline{ t}}] = {\mathcal{K}}[t_0, t_1, \ldots, t_{s+2}]
\longrightarrow {\tilde A}\quad\mbox{given by}\quad 
t_i \to LT(f_i),$$ has 
$$\mbox{Ker}~\phi = \left(p_1({\bf\underline{ t}}), p_2({\bf\underline{ t}}), 
\ldots, p_s({\bf\underline{ t}})\right),$$ 
where
$$p_1({\bf\underline{ t}})  = t_2^p-t_1^{p+2},\quad     
p_{j+2}({\bf\underline{ t}}) =  t^p_{j+3} -  t_1^{p^j(p^2-1)}t_{j+2}, 
\quad\mbox{for}\quad 1\leq j \leq s-2.$$
Now the proof is along the same lines as for $r$ equal to the odd case.
\end{proof}

\end{document}